\newtheoremstyle{mytheoremstyle} % name
    {5pt}                    % Space above
    {5pt}                    % Space below
    {\itshape}                   % Body font
    {\parindent}                           % Indent amount (empty = no indent, \parindent = para indent)
    {\bf}                   % Theorem head font
    {.}                          % Punctuation after theorem head
    {.5em}                       % Space after thm head: " " = normal interword space; \newline = linebreak
    {}  % Theorem head spec (can be left empty, meaning ‘normal’)
\theoremstyle{mytheoremstyle}
\newtheorem{theorem}{Theorem}[section]
\newtheorem{theoremdefinition}[theorem]{Theorem and Definition}
\newtheorem{lemma}[theorem]{Lemma}
\newtheorem{corollary}[theorem]{Corollary}
\newtheoremstyle{mytdefintionstyle} % name
    {5pt}                    % Space above
    {5pt}                    % Space below
    {\rm}                   % Body font
    {\parindent}                           % Indent amount
    {\bf}                   % Theorem head font
    {.}                          % Punctuation after theorem head
    {.5em}                       % Space after theorem head
    {}  % Theorem head spec (can be left empty, meaning ‘normal’)
\theoremstyle{remark}
\newtheorem{remark}[theorem]{Remark}
\theoremstyle{mytdefintionstyle}
\newtheorem{definition}[theorem]{Definition}
\newtheorem{example}[theorem]{Example}
\newtheorem{construction}[theorem]{Construction}
\newtheorem*{convention}{Convention}
\newtheorem{notation}[theorem]{Notation}
\newtheoremstyle{exmp_contd} 
{\topsep} {\topsep}% 
{\upshape}% Body font 
{}% Indent amount (empty = no indent, \parindent = para indent) 
{\bfseries}% Thm head font
{}% Punctuation after thm head 
{ }% Space after thm head (\newline = linebreak) 
{\thmname{#1}\,\thmnumber{ #2}\thmnote{#3}\enspace(continued)}% Thm head spec 
\theoremstyle{exmp_contd}
\definecolor{ExQ}{HTML}{0000FF}
\definecolor{Dec}{HTML}{E07B00}
\newcommand{\CapPkg}{\textsc{Cap}\xspace}
\newcommand{\AC}{\mathbf{A}}
\newcommand{\BC}{\mathbf{B}}
\newcommand{\FC}{\mathbf{F}}
\newcommand{\PC}{\mathbf{P}}
\newcommand{\pmatrow}[2]{ \begin{pmatrix}{#1} & {#2} \end{pmatrix} }
\newcommand{\pmatcol}[2]{ \begin{pmatrix}{#1} \\ {#2} \end{pmatrix} }
\newcommand{\CIC}{\mathcal{Q}}
\newcommand{\ACIC}{\mathbf{Aux}}
\newcommand{\N}{\mathbb{N}}
\newcommand{\Nzero}{\mathbb{Z}_{\geq 0}}
\newcommand{\Q}{\mathbb{Q}}
\newcommand{\SyzC}{\mathbf{Syz}}
\newcommand{\Ab}{\mathbf{Ab}}
\newcommand{\op}{\mathrm{op}}
\newcommand{\Freyd}{\mathcal{A}}
\newcommand{\im}{\mathrm{im}}
\newcommand{\id}{\mathrm{id}}
\DeclareMathOperator{\Hom}{\mathrm{Hom}}
\DeclareMathOperator{\kernel}{\mathrm{ker}}
\DeclareMathOperator{\cokernel}{\mathrm{coker}}
\DeclareMathOperator{\image}{\mathrm{im}}
\newcommand{\emb}{\mathrm{emb}}
\newcommand{\Rows}{\mathrm{Rows}}
\newcommand{\grRows}{\mathrm{grRows}}
\newcommand{\fp}{\mathrm{fp}}
\newcommand{\Modl}{\text{-}\mathrm{Mod}}
\newcommand{\Modr}{\mathrm{Mod}\text{-}}
\newcommand{\operation}{operation\xspace}
\newcommand{\constructs}{constructs\xspace}
\tikzset{round left paren/.style={ncbar=0.5cm,out=120,in=-120}}
\tikzset{round right paren/.style={ncbar=0.5cm,out=60,in=-60}}
\newcolumntype{C}[1]{>{\centering\arraybackslash$}p{#1}<{$}}
\newlength{\mycolwd}% array column width
\definecolor{lightgray}{gray}{0.8}
\newcolumntype{L}{>{\raggedleft}p{0.28\textwidth}}
\newcolumntype{R}{p{0.8\textwidth}}
\definecolor{ctcolor}{gray}{0.95}
\definecolor{ctucolor}{gray}{0.85}
\newcommand{\thickhline}{%
    \noalign {\ifnum 0=`}\fi \hrule height 1pt
    \futurelet \reserved@a \@xhline
}
\newcolumntype{"}{@{\hskip\tabcolsep\vrule width 1pt\hskip\tabcolsep}}
\newlist{theoremenumerate}{enumerate}{1}
\setlist[theoremenumerate]{label=(\arabic{theoremenumeratei}), ref=\thetheorem.(\arabic{theoremenumeratei}),noitemsep}
\begin{document}

\title{Closing the category of finitely presented functors under images made constructive}
\date{}
\author{Sebastian Posur}
\email{posur@art.rwth-aachen.de}
\homepage{https://sebastianpos.github.io/}
\orcid{0000-0002-9673-0811}
\thanks{The author is supported by Deutsche Forschungsgemeinschaft (DFG) grant SFB-TRR 195: \emph{Symbolic Tools in Mathematics and their Application}.}
\affiliation{RWTH Aachen University, Germany}
\maketitle

\begin{abstract}
For an additive category $\PC$
we provide an explicit construction of a category
$\CIC( \PC )$
whose objects can be thought of as formally representing
$\frac{\image( \gamma )}{\image( \rho ) \cap \image( \gamma )}$ for given morphisms $\gamma: A \rightarrow B$
and $\rho: C \rightarrow B$ in $\PC$, even though $\PC$ does not need to admit quotients or images.
We show how it is possible to calculate effectively within $\CIC( \PC )$,
provided that a basic problem related to syzygies can be handled algorithmically.
We prove an equivalence of $\CIC( \PC )$ with the smallest subcategory
of the category of contravariant functors from $\PC$ to the category of abelian groups $\Ab$ 
which contains all finitely presented functors
and is closed under the operation of taking images. Moreover, we characterize the abelian case:
$\CIC( \PC )$ is abelian if and only if it is equivalent to $\fp( \PC^{\op}, \Ab )$,
the category of all finitely presented functors, which in turn, by a theorem of Freyd,
is abelian if and only if $\PC$ has weak kernels.

The category $\CIC( \PC )$ is a categorical abstraction of the data structure for finitely presented $R$-modules
employed by the computer algebra system \textsc{Macaulay2}, where $R$ is a ring. By our generalization
to arbitrary additive categories, we show how this data structure can also be used for modeling finitely presented graded modules,
finitely presented functors, and some not necessarily finitely presented modules over a non-coherent ring.
\end{abstract}

\tableofcontents

\section{Introduction}
The purpose of constructive category theory lies in
finding categorical representations (data structures) of mathematical objects
such that effective computations become possible \cite{PosCCT}.
A nice example of this philosophy is provided by the case of finitely presented modules over a ring $R$:
it only requires some basic algorithms for $R$
in order to obtain an effective categorical framework for doing homological algebra \cite{BL}
that even allows the implementation of concepts like
spectral sequences \cite{barhabil, PosurDoktor}, Serre quotients \cite{BL_GabrielMorphisms, GutscheDoktor}, 
or the grade filtration \cite{QGrade}.

Regarding $\Ab$-categories\footnote{These are categories enriched over the category of abelian groups $\Ab$.}
as ``rings with several objects''
is a powerful idea 
thoroughly developed by Mitchell in \cite{MitchellRings}
that yielded remarkable generalizations and clarifications in homological ring theory.
Following the idea of generalizing from a ring $R$ to an $\Ab$-category $\PC$,
the purpose of this paper is to explain, from a constructive and categorical point of view,
the data structure for modules over a ring $R$ used by the computer algebra system \textsc{Macaulay2} \cite{M2},
and moreover to generalize this data structure from the case of $R$ to the case of $\PC$.
The upshot is an effective treatment of the smallest subcategory of the category of contravariant additive functors $\PC \rightarrow \Ab$
which contains all finitely presented functors and is closed under images.

In \textsc{Macaulay2}, the data structure of a module is given by
two matrices $A \in R^{a \times b}$ and $C \in R^{c \times b}$
for $a,b,c \in \Nzero$. The left
$R$-module corresponding to such a pair of matrices is the (abstract) subquotient module $\frac{\im(A)}{\im(A) \cap \im(C)}$,
or equivalently $\frac{\im(A) + \im(C)}{\im(C)}$, of the row module $R^{1 \times b}$,
where we identify a matrix with its induced morphism between free row modules.
Given a second pair of matrices $A' \in R^{a' \times b'}$ and $C' \in R^{c' \times b'}$,
a morphism from
$\frac{\im(A)}{\im(A) \cap \im(C)}$
to
$\frac{\im(A')}{\im(A') \cap \im(C')}$
is modeled by a matrix $M \in R^{a \times a'}$
such that we may complete the following square with the dashed arrow
to a commutative diagram:
\begin{center}
            \begin{tikzpicture}[label/.style={postaction={
              decorate,
              decoration={markings, mark=at position .5 with \node #1;}}},
              mylabel/.style={thick, draw=none, align=center, minimum width=0.5cm, minimum height=0.5cm,fill=white}]
                \coordinate (r) at (3.5,0);
                \coordinate (u) at (0,2);
                \node (A) {$R^{1 \times a}$};
                \node (B) at ($(A)+1*(r)$) {};
                \node (C) at ($(B) +(r)$) {};
                \node (A2) at ($(A) - (u)$) {$R^{1 \times a'}$};
                \node (B2) at ($(B) - (u)$) {$\frac{\im(A')}{\im(A') \cap \im(C')}$};
                \node (C2) at ($(C) - (r)$) {$\frac{\im(A)}{\im(A) \cap \im(C)}$};
                \draw[->>,thick] (A2) -- (B2);
                \draw[->,thick, dashed] (C2) --  (B2);
                \draw[->>,thick] (A) --  (C2);
                \draw[->,thick] (A) --node[left]{$M$} (A2);
            \end{tikzpicture}
\end{center}
This fact can be technically expressed as follows:
for all $\sigma \in R^{1 \times a}$, $\omega \in R^{1 \times c}$ such that
$\sigma \cdot A = \omega \cdot C$,
there exists $\omega' \in  R^{1 \times c'}$
such that $(\sigma \cdot M) \cdot A' = \omega' \cdot C'$.
In other words, a syzygy $\sigma$, i.e.,
an element in the kernel of $R^{1 \times a} \twoheadrightarrow \frac{\im(A)}{\im(A) \cap \im(C)}$,
is mapped via $M$ to another syzygy $\sigma \cdot M$,
i.e., an element in the kernel of $R^{1 \times a'} \twoheadrightarrow \frac{\im(A')}{\im(A') \cap \im(C')}$.
Moreover, the technical condition for $M$ representing the zero morphism
is the following: there exists $\zeta \in R^{a \times c'}$ such that
$M \cdot A' = \zeta \cdot C'$.
In other words, every element in the image of the induced morphism is already a syzygy.
Now, the fact that these conditions can be expressed purely in the language of matrices
over $R$ is our starting point for generalizing this data structure to an arbitrary additive category.

Note that matrices over $R$ form the morphisms of an additive category $\Rows_R$,
which is the full subcategory of all $R$-modules generated by the row modules $R^{1 \times n}$, $n \in \Nzero$.
If we think of the matrices in our description of the module data structure
as morphisms in $\Rows_R$, then we can easily replace $\Rows_R$ by an arbitrary additive category $\PC$
in order to obtain a new category $\CIC( \PC )$, whose objects
are pairs of morphisms $(A \longrightarrow B \longleftarrow C)$ in $\PC$ having the same range,
a so-called cospan.
A morphism from $(A \longrightarrow B \longleftarrow C)$ to $(A' \longrightarrow B' \longleftarrow C')$
is given by a morphism $A \longrightarrow A'$ in $\PC$
that respects syzygies, a condition which can formally be expressed similarly to the corresponding condition in the case of matrices over $R$.
We interpret the objects
$(A \stackrel{\gamma}{\longrightarrow} B \stackrel{\rho}{\longleftarrow} C)$ of $\CIC( \PC )$
as entities that ``behave'' like the subquotient $\frac{\im(\gamma)}{\im(\gamma) \cap \im(\rho)}$,
even though neither images nor quotients do have to exist in $\PC$.

In this paper, whenever we describe the constructive aspects of the presented theory,
we appeal to an intuitive understanding of the concept of an algorithm or a data structure, see \cite[Introduction]{MRRConstructiveAlgebra}.
All constructions are written in a way such that an implementation in a software project like
\CapPkg (categories, algorithms, programming) \cite{CAP-project} becomes possible.

In Section \ref{section:subquotients}, we formally construct the category $\CIC( \PC )$
and describe the main algorithmic problem one needs to be able to solve within $\PC$
in order to be able to work algorithmically with $\CIC( \PC )$: the so-called syzygy inclusion problem (see Definition \ref{definition:syzygy_inclusion}).
If $\PC$ has decidable syzygy inclusion,
we show how to compute cokernels, universal epi-mono factorizations, lifts along monomorphisms,
and colifts along epimorphisms in $\CIC( \PC )$.

In Section \ref{section:modules}, we prove (Corollary \ref{corollary:characterization_as_cokernel_image_closure})
that $\CIC( \PC )$ identifies with
the smallest full and replete subcategory of the category of all additive functors
$\PC^{\mathrm{op}} \rightarrow \Ab$ (mapping to the category of abelian groups $\Ab$)
which contains the representable functors $\Hom_{\PC}(-,A)$ for $A \in \PC$
and is closed under the operations of taking cokernels and images.
In particular, we get a full and faithful functor
\[
 \fp( \PC^{\op}, \Ab ) \hookrightarrow \CIC( \PC )
\]
which realizes the category of all finitely presented functors $\fp( \PC^{\op}, \Ab )$
as a full subcategory of $\CIC( \PC )$.
If $\PC = \Rows_R$, then contravariant additive functors to $\Ab$ identify with $R$-modules,
and $\fp( \Rows_R^{\op}, \Ab )$ with the category of finitely presented $R$-modules.
In this case, $\CIC( \Rows_R )$ can be seen as the smallest full and replete subcategory of all $R$-modules
that contains the row modules $R^{1 \times n}$ for all $n \geq 0$ and is closed under cokernels and images.

By a theorem of Freyd \cite{FreydRep}, $\fp( \PC^{\op}, \Ab )$ is an abelian category if and only if 
$\PC$ has weak kernels. We prove that the same characterization holds for $\CIC( \PC )$ (Theorem \ref{theorem:abelian_case}),
and show explicitly how weak kernels can be used to construct kernels in $\CIC( \PC )$ in Section \ref{section:abelian_case}.
We also introduce the notion of a biased weak pullback in $\PC$, which, from an algorithmic point of view,
turns out to be more effective in the construction of kernels in $\CIC( \PC )$.
Finally, we prove that $\fp( \PC^{\op}, \Ab )$ and $\CIC( \PC )$
are equivalent as abstract categories if and only if $\CIC( \PC )$ is abelian,
which, as a byproduct, yields an interesting result that only concerns the category $\fp( \PC^{\op}, \Ab )$:
it is abelian if and only if it has epi-mono factorizations.

In the last Section \ref{section:applications},
we give an example of a non-coherent ring $R$, i.e., a ring such that the category $\Rows_R$
does not admit weak kernels, but which nevertheless has decidable syzygy inclusion (Theorem \ref{theorem:rows_R_decidable_syzygy_inclusion}).
It follows from our discussion in Section \ref{section:abelian_case} that the inclusion
$
 \fp( \Rows_R^{\op}, \Ab ) \hookrightarrow \CIC( \Rows_R )
$
is proper.
Thus, we may algorithmically perform all the constructions listed in Section \ref{section:subquotients} within $\CIC( \Rows_R )$,
and this for a greater class of $R$-modules than finitely presented ones.

To conclude, we discuss how our category constructor $\CIC( - )$ can also yield a computational model
for graded modules, and for finitely presented functor categories on module categories
by an iterated application.

\begin{convention}
 Given morphisms $\gamma_{AC}: A \rightarrow C$, $\gamma_{AD}: A \rightarrow D$,
$\gamma_{BC}: B \rightarrow C$, and $\gamma_{BD}: B \rightarrow D$
in an additive category $\PC$,
we denote the induced morphism between direct sums using the row convention, i.e.,
\[
 \begin{pmatrix}
  \gamma_{AC} & \gamma_{AD} \\
  \gamma_{BC} & \gamma_{BD}
 \end{pmatrix}:
 A \oplus B \rightarrow C \oplus D.
\]
We use the notation
$\alpha \cdot \beta: A \rightarrow C$
for the composition of morphisms $\alpha: A \rightarrow B$ and $\beta: B \rightarrow C$,
since then, composition of morphisms between direct sums simply becomes matrix multiplication.

Given two subobjects $U,V \hookrightarrow W$ in an abelian category,
we use the simplified notation $\frac{U}{V}$ in order to denote the subquotient $\frac{U+V}{V} \simeq \frac{U}{U \cap V}$ of $W$.
We also occasionally use the standard abbreviations epis, monos, isos
for epimorphisms, monomorphisms, and isomorphisms, respectively.
A mono that arises as the kernel of some morphism in a pointed category
is called a normal mono.

A universal epi-mono factorization is an essentially unique factorization of a morphism
into an epi followed by a mono. For brevity we also refer to such a factorization as an epi-mono factorization.

Throughout the paper, a functor between two additive categories is always meant to be an additive functor.

The symbol $\Nzero$ denotes the set of non-negative integers.
\end{convention}

\section{The category $\CIC(\PC)$}\label{section:subquotients}

In this section, $\PC$ always denotes an additive category.
The goal is to formally construct an additive category $\CIC(\PC)$
that admits cokernels and epi-mono factorizations
together with a full additive embedding $\PC \subseteq \CIC(\PC)$.
As a running example, the reader can think of $\PC$
as $\Rows_R$, i.e., the full subcategory
of $R$-modules $R\Modl$ generated by row modules $R^{1 \times n}$ for $n \in \Nzero$,
where $R$ is any unital ring. Morphisms in $\Rows_R$ will be tacitly identified with matrices over $R$.
The category $\CIC( \Rows_R )$ will turn out
to be equivalent to the smallest full and replete subcategory of $R\Modl$ that contains $\Rows_R$
and is closed under taking cokernels and images in $R\Modl$.

\subsection{The category of syzygies}
A \textbf{cospan} in $\PC$ is simply a pair of morphisms
\[(A \stackrel{\gamma}{\longrightarrow} B, C \stackrel{\rho}{\longrightarrow} B)\] in $\PC$,
with shorthand notation $(A \stackrel{\gamma}{\longrightarrow} B \stackrel{\rho}{\longleftarrow} C)$.

\begin{definition}
 Let $(A \stackrel{\gamma}{\longrightarrow} B \stackrel{\rho}{\longleftarrow} C)$ be a cospan in $\PC$.
 Its \textbf{category of syzygies} $\SyzC( A \stackrel{\gamma}{\longrightarrow} B \stackrel{\rho}{\longleftarrow} C )$
 consists of the following data:
 \begin{enumerate}
  \item Objects, which we also call \textbf{syzygies}, are given by morphisms $S \stackrel{\sigma}{\longrightarrow} A$ in $\PC$ 
  such that there exists another morphism
        $\omega: S \longrightarrow C$, which we call a \textbf{syzygy witness},
        rendering the diagram
        \begin{center}
            \begin{tikzpicture}[label/.style={postaction={
              decorate,
              decoration={markings, mark=at position .5 with \node #1;}}},
              mylabel/.style={thick, draw=none, align=center, minimum width=0.5cm, minimum height=0.5cm,fill=white}]
                \coordinate (r) at (2.5,0);
                \coordinate (u) at (0,1.2);
                \node (A) {$S$};
                \node (B) at ($(A)+1*(r)$) {};
                \node (C) at ($(B) +(r)$) {};
                \node (A2) at ($(A) - (u)$) {$A$};
                \node (B2) at ($(B) - (u)$) {$B$};
                \node (C2) at ($(C) - (u)$) {$C$};
                \draw[->,thick] (A2) -- node[below]{$\gamma$} (B2);
                \draw[->,thick] (C2) -- node[below]{$\rho$} (B2);
                \draw[->,thick, dashed] (A) --node[above]{$\omega$}  (C2);
                \draw[->,thick] (A) --node[left]{$\sigma$} (A2);
            \end{tikzpicture}
        \end{center}
        commutative.
        Whenever we depict a syzygy by a commutative diagram like the one above, we will
        draw the syzygy witness with a dashed arrow.
  \item A morphism from a syzygy $S \stackrel{\sigma}{\longrightarrow} A$ to a syzygy $S' \stackrel{\sigma'}{\longrightarrow} A$
        is given by a morphism $\tau: S \rightarrow S'$ such that $\tau \cdot \sigma' = \sigma$, i.e., the following diagram commutes:
        \begin{center}
            \begin{tikzpicture}[label/.style={postaction={
              decorate,
              decoration={markings, mark=at position .5 with \node #1;}}},
              mylabel/.style={thick, draw=none, align=center, minimum width=0.5cm, minimum height=0.5cm,fill=white}]
                \coordinate (r) at (2.5,0);
                \coordinate (u) at (0,1.2);
                \node (A) {$S$};
                \node (B) at ($(A)+2*(r)$) {$S'$};
                \node (C) at ($(A) +(r) - (u)$) {$A$.};
                
                \draw[->,thick] (A) -- node[above]{$\tau$} (B);
                \draw[->,thick] (A) -- node[below]{$\sigma$} (C);
                \draw[->,thick] (B) -- node[below]{$\sigma'$} (C);
            \end{tikzpicture}
        \end{center}
 \end{enumerate}
\end{definition}

\begin{remark}
 The category of syzygies $\SyzC( A \stackrel{\gamma}{\longrightarrow} B \stackrel{\rho}{\longleftarrow} C )$
 is a full subcategory of the slice category of $\PC$ over the object $A$.
\end{remark}

\begin{example}
 In our running example $\PC = \Rows_R$,
 giving a cospan means giving a pair of matrices
 $(R^{1 \times a} \stackrel{\gamma}{\longrightarrow} R^{1 \times b} \stackrel{\rho}{\longleftarrow} R^{1 \times c})$
 that have the same number of columns.
 An object in its category of syzygies is a matrix $R^{1 \times s} \stackrel{\sigma}{\longrightarrow} R^{1 \times a}$
 which fits into a chain complex
 \begin{center}
    \begin{tikzpicture}[label/.style={postaction={
      decorate,
      decoration={markings, mark=at position .5 with \node #1;}}},
      mylabel/.style={thick, draw=none, align=center, minimum width=0.5cm, minimum height=0.5cm,fill=white}]
        \coordinate (r) at (2.5,0);
        \coordinate (u) at (0,1.2);
        \node (A) {$R^{1 \times s}$};
        \node (B) at ($(A)+(r)$) {$R^{1 \times a}$};
        \node (C) at ($(B) + (r)$) {$\frac{\im( \gamma )}{\im( \rho )}$};
        \node (D) at ($(C) +(r)$) {$0$};
        \draw[->,thick] (A) --node[above]{$\sigma$} (B);
        \draw[->,thick] (B) --node[above]{$_{|}\overline{\gamma}$} (C);
        \draw[->,thick] (C) -- (D);
    \end{tikzpicture}
 \end{center}
 in $R\Modl$, hence the name category of \emph{syzygies}.
 Here, the morphism $_{|}\overline{\gamma}$ is given 
 as follows: first, we coastrict $\gamma$ to its image and obtain the morphism
 $_{|}{\gamma}: R^{1 \times a} \longrightarrow \im( \gamma )$.
 Second, we compose $_{|}{\gamma}$ with the natural projection 
 $\image( \gamma ) \twoheadrightarrow \frac{\im( \gamma )}{\im( \rho )}$
 and obtain the desired morphism $_{|}\overline{\gamma}$. Recall that
 by our convention, $\frac{\im( \gamma )}{\im( \rho )}$ is shorthand for 
 $\frac{\im( \gamma )}{\im( \rho ) \cap \im( \gamma )}$.
\end{example}

\subsection{The syzygy inclusion problem}

In this subsection, we state an algorithmic problem for $\PC$
that will turn out to be the key to a computational approach to the yet to be constructed
category $\CIC( \PC )$.

\begin{definition}\label{definition:syzygy_inclusion}
 We say that $\PC$ has \textbf{decidable syzygy inclusion}
 if it comes equipped with an algorithm
 whose input is a pair of cospans in $\PC$ with the same first object
 \begin{center}
    \begin{tikzpicture}[label/.style={postaction={
      decorate,
      decoration={markings, mark=at position .5 with \node #1;}}},
      mylabel/.style={thick, draw=none, align=center, minimum width=0.5cm, minimum height=0.5cm,fill=white}]
        \coordinate (r) at (2.5,0);
        \coordinate (u) at (0,-0.5);
        \node (A) {$A$};
        \node (B) at ($(A)+(r) - (u)$) {$B$};
        \node (C) at ($(B) + (r)$) {$C$};
        \node (Bp) at ($(A) +(r) + (u)$) {$B'$};
        \node (Cp) at ($(Bp) +(r) $) {$C'$,};
        \draw[->,thick] (A) --node[above]{$\gamma$} (B);
        \draw[->,thick] (C) --node[above]{$\rho$} (B);
        \draw[->,thick] (A) --node[below]{$\gamma'$} (Bp);
        \draw[->,thick] (Cp) --node[below]{$\rho'$} (Bp);
    \end{tikzpicture}
 \end{center}
 and whose output is a constructive answer to the question
 whether we have an inclusion
 \[
    \SyzC( A \stackrel{\gamma}{\longrightarrow} B \stackrel{\rho}{\longleftarrow} C ) \stackrel{?}{\subseteq} \SyzC( A \stackrel{\gamma'}{\longrightarrow} B' \stackrel{\rho'}{\longleftarrow} C' )
 \]
 of full subcategories of the slice category of $\PC$ over the object $A$.
 By a constructive answer, we mean that in the case when the algorithm answers affirmatively,
 it also provides
 an additional algorithm
 \[
    (S \stackrel{\sigma}{\longrightarrow} A, S \stackrel{\omega}{\longrightarrow} C) \mapsto \omega'
 \]
 mapping a syzygy $\sigma \in \SyzC( A \stackrel{\gamma}{\longrightarrow} B \stackrel{\rho}{\longleftarrow} C )$
 together with a corresponding syzygy witness $\omega$
 to
 a syzygy witness $\omega'$ that proves $\sigma \in \SyzC( A \stackrel{\gamma'}{\longrightarrow} B' \stackrel{\rho'}{\longleftarrow} C' )$.
\end{definition}

\begin{definition}
 We say that $\PC$ has \textbf{decidable lifts}
 if it comes equipped with an algorithm
 whose input is a diagram
 \begin{center}
            \begin{tikzpicture}[label/.style={postaction={
              decorate,
              decoration={markings, mark=at position .5 with \node #1;}}},
              mylabel/.style={thick, draw=none, align=center, minimum width=0.5cm, minimum height=0.5cm,fill=white}]
                \coordinate (r) at (2.5,0);
                \coordinate (u) at (0,1.2);
                \node (A) {$A$};
                \node (B) at ($(A)+1*(r)$) {};
                \node (C) at ($(B) +(r)$) {};
                \node (A2) at ($(A) - (u)$) {$B$};
                \node (B2) at ($(B) - (u)$) {$C$};
                \draw[->,thick] (B2) -- node[below]{$\gamma$} (A2);
                \draw[->,thick] (A) --node[left]{$\alpha$} (A2);
            \end{tikzpicture}
        \end{center}
 in $\PC$, and the output is either a morphism $\lambda$ rendering the diagram
 \begin{center}
            \begin{tikzpicture}[label/.style={postaction={
              decorate,
              decoration={markings, mark=at position .5 with \node #1;}}},
              mylabel/.style={thick, draw=none, align=center, minimum width=0.5cm, minimum height=0.5cm,fill=white}]
                \coordinate (r) at (2.5,0);
                \coordinate (u) at (0,1.2);
                \node (A) {$A$};
                \node (B) at ($(A)+1*(r)$) {};
                \node (C) at ($(B) +(r)$) {};
                \node (A2) at ($(A) - (u)$) {$B$};
                \node (B2) at ($(B) - (u)$) {$C$};
                \draw[->,thick] (B2) -- node[below]{$\gamma$} (A2);
                \draw[->,thick] (A) --node[left]{$\alpha$} (A2);
                \draw[->,dashed,thick] (A) --node[above]{$\lambda$} (B2);
            \end{tikzpicture}
        \end{center}
  commutative, or $\mathtt{false}$ if no such $\lambda$ exists.
\end{definition}

\begin{remark}\label{remark:decidable_lifts}
  We claim that having decidable syzygy inclusion implies having decidable lifts:
  suppose given  $(A \stackrel{\alpha}{\longrightarrow} B \stackrel{\gamma}{\longleftarrow} C)$.
  If a lift $\lambda$ exists, then any $\sigma: S \rightarrow A$ lies in $\SyzC( A \stackrel{\alpha}{\longrightarrow} B \stackrel{\gamma}{\longleftarrow} C)$
  with syzygy witness given by $\sigma \cdot \lambda$.
  Thus, we have
  \[
   \SyzC( A \longrightarrow 0 \longleftarrow 0)
   \subseteq
   \SyzC( A \stackrel{\alpha}{\longrightarrow} B \stackrel{\gamma}{\longleftarrow} C).
  \]
  Conversely, if this inclusion holds, then a lift $\lambda$ can be constructed explicitly as a syzygy witness
  of the syzygy $\id_A \in \SyzC( A \stackrel{\alpha}{\longrightarrow} B \stackrel{\gamma}{\longleftarrow} C)$.
\end{remark}

\begin{remark}
 Having decidable syzygy inclusion can also be rephrased as follows:
 $\PC$ has decidable lifts,
 and we have an algorithm that decides
 \[
    \SyzC( A \stackrel{\gamma}{\longrightarrow} B \stackrel{\rho}{\longleftarrow} C ) \stackrel{?}{\subseteq} \SyzC( A \stackrel{\gamma'}{\longrightarrow} B' \stackrel{\rho'}{\longleftarrow} C' )
 \]
 with a simple yes/no answer.
 For if the algorithm answers yes, we may produce our desired syzygy witnesses using the algorithm for computing lifts.
\end{remark}

\begin{example}\label{example:syzygy_inclusion_for_rings}
  In our running example $\PC = \Rows_R$,
  given two cospans with the same first object
  $(R^{1 \times a} \stackrel{\gamma}{\longrightarrow} R^{1 \times b} \stackrel{\rho}{\longleftarrow} R^{1 \times c})$
  and 
  $(R^{1 \times a} \stackrel{\gamma'}{\longrightarrow} R^{1 \times b'} \stackrel{\rho'}{\longleftarrow} R^{1 \times c'})$,
  being able to solve their syzygy inclusion problem implies
  being able to decide the existence of dashed arrows rendering the following diagram with exact rows commutative:
  \begin{center}
             \begin{tikzpicture}[label/.style={postaction={
               decorate,
               decoration={markings, mark=at position .5 with \node #1;}}},
               mylabel/.style={thick, draw=none, align=center, minimum width=0.5cm, minimum height=0.5cm,fill=white}]
                 \coordinate (r) at (3,0);
                 \coordinate (u) at (0,2);
                 \node (A) {$0$};
                 \node (B) at ($(A)+(r)$) {$\kernel( _{|}\overline{\gamma} )$};
                 \node (C) at ($(B)+(r)$) {$R^{1 \times a}$};
                 \node (D) at ($(C)+(r)$) {$\frac{\im( \gamma )}{\im( \rho )}$};
                 \node (E) at ($(D)+(r)$) {$0$};
                 
                 \node (A2) at ($(A)-(u)$) {$0$};
                 \node (B2) at ($(A2)+(r)$) {$\kernel( _{|}\overline{\gamma'} )$};
                 \node (C2) at ($(B2)+(r)$) {$R^{1 \times a}$};
                 \node (D2) at ($(C2)+(r)$) {$\frac{\im( \gamma' )}{\im( \rho' )}$};
                 \node (E2) at ($(D2)+(r)$) {$0$.};
                 
                 \draw[->,thick] (A) -- (B);
                 \draw[->,thick] (B) -- (C);
                 \draw[->,thick] (C) --node[above]{$_{|}\overline{\gamma}$} (D);
                 \draw[->,thick] (D) -- (E);
                 
                 \draw[->,thick] (A2) -- (B2);
                 \draw[->,thick] (B2) -- (C2);
                 \draw[->,thick] (C2) --node[above]{$_{|}\overline{\gamma'}$} (D2);
                 \draw[->,thick] (D2) -- (E2);
                 
                 \draw[->,thick,dashed] (B) -- (B2);
                 \draw[->,thick] (C) --node[left]{$\id$} (C2);
                 \draw[->,thick,dashed] (D) -- (D2);
             \end{tikzpicture}
   \end{center}
   Indeed, the rows of a syzygy $\sigma \in R^{s \times a}$ in 
   $\SyzC( R^{1 \times a} \stackrel{\gamma}{\longrightarrow} R^{1 \times b} \stackrel{\rho}{\longleftarrow} R^{1 \times c} )$
   for $s \in \Nzero$ can be regarded as a collection of $s$-many elements
   in $\kernel( _{|}\overline{\gamma} )$, and asking for the existence of the dashed arrows
   is the question of whether these rows are also lying in $\kernel( _{|}\overline{\gamma'} )$,
   which is equivalent to $\sigma$ being a syzygy in 
   $\SyzC(R^{1 \times a} \stackrel{\gamma'}{\longrightarrow} R^{1 \times b'} \stackrel{\rho'}{\longleftarrow} R^{1 \times c'})$.
   
   The question whether 
   \[\kernel( _{|}\overline{\gamma} ) \subseteq \kernel( _{|}\overline{\gamma'} )\]
   can always be answered in the case when $R$ is a (left) \textbf{computable ring},
   a notion introduced by Barakat and Lange-Hegermann in \cite{BL}.
   It is defined as a ring that comes equipped with two algorithms:
   \begin{enumerate}
    \item (Algorithm for deciding lifts): given matrices $A \in R^{m \times n}$ and $B \in R^{q \times n}$ for $m,n,q \in \Nzero$,
    decide whether there exists an $X \in R^{q \times m}$ such that
    \[
     X \cdot A = B,
    \]
    and in the affirmative case compute such an $X$.
    \item (Algorithm for computing row syzygies):
    given a matrix $A \in R^{m \times n}$, compute $o \in \Nzero$ and $L \in R^{o \times m}$ such that
    \[
     L \cdot A = 0,
    \]
    and $L$ is (weakly) universal with this property, i.e., for any other $T \in R^{p \times m}$, $p \in \Nzero$,
    such that $T \cdot A = 0$, we can find a (not necessarily unique) $U \in R^{p \times o}$ such that $U \cdot L = T$.
   \end{enumerate}
   
   Prominent examples of (commutative) computable rings are quotients of polynomial rings $k[x_1, \dots x_n]$
   for $n \in \Nzero$
   by ideals generated by finitely many prescribed polynomials, where $k$ is a computable field $k$ (like $\Q$).
   This is mainly due to Gröbner basis techniques (see, e.g., \cite{GP}).
   Also, localizations of computable commutative rings $R$ at a multiplicative set $S$ turn out to be
   computable provided that one may algorithmically
   determine witnesses for the intersection of finitely generated ideals $I \subseteq R$ with $S$ being non-empty \cite{PosLinSys}.
   
   Left computable rings are in particular left coherent, i.e., the 
   category of finitely presented left $R$-modules is abelian.
   In particular, $\kernel( _{|}\overline{\gamma} )$ and $\kernel( _{|}\overline{\gamma'} )$
   both are finitely presented modules in this case, and the computability of $R$ ensures that we 
   can algorithmically test the inclusion of the finitely many generators\footnote{
     Concretely, we may compute finitely many generators of $\kernel( _{|}\overline{\gamma} )$
     by building up the stacked matrix
     $\pmatcol{\gamma}{\rho} \in R^{(a+c) \times b}$
     and applying to it the algorithm for computing row syzygies.
     This yields a finite subset of $R^{1 \times (a+c)}$, and projecting this set
     to its first $a$ entries (via the natural projection $R^{1 \times (a+c)} \rightarrow R^{1 \times a}$) yields generators of $\kernel( _{|}\overline{\gamma} )$.
     More abstractly, $R$ being computable implies the computability of the abelian category of finitely presented $R$-modules,
     and within such a category, we can perform constructions coming from the axioms of an abelian category effectively.
     In particular, we may build up the exact sequences as they are presented within this example on the computer
     and decide the existence of the dashed arrows (see \cite{PosFreyd} for a detailed explanation).
   }
   of $\kernel( _{|}\overline{\gamma} )$
   in $\kernel( _{|}\overline{\gamma'} )$.
   
   In Section \ref{section:applications}, we will give an example of a non-coherent ring $R$
   for which $\Rows_R$ nevertheless has decidable syzygy inclusion, even though $\kernel( _{|}\overline{\gamma} )$
   might not be finitely generated.
 \end{example} 

\subsection{An auxiliary category}
We define an auxiliary additive category $\ACIC( \PC )$.
Later, $\CIC( \PC )$ will arise as a quotient of $\ACIC( \PC )$.

\begin{definition}
 The additive category $\ACIC( \PC )$ is defined by the following data:
 \begin{theoremenumerate}
  \item An object in $\ACIC( \PC )$ is given by a cospan in $\PC$.
        We will write such an object as
        \[(A \stackrel{\gamma_A}{\longrightarrow} \Omega_A \stackrel{\rho_A}{\longleftarrow} R_A),\]
        even though $\Omega_A$ and $R_A$ do not formally depend\footnote{Guided by our running example $\Rows_R$, we think of $\Omega_A$ as an ambient space for the image of $\gamma_A$, and of 
        $\rho_A: R_A \rightarrow \Omega_A$ as relations
        imposed on this ambient space.} on $A$.
  \item A morphism in $\ACIC( \PC )$ from
        $(A \stackrel{\gamma_A}{\longrightarrow} \Omega_A \stackrel{\rho_A}{\longleftarrow} R_A)$
        to
        $(B \stackrel{\gamma_B}{\longrightarrow} \Omega_B \stackrel{\rho_B}{\longleftarrow} R_B)$
        is given by a morphism
        $\alpha: A \rightarrow B$ in $\PC$
        that respects syzygies, i.e.,
        \[
         \sigma \in \SyzC( A \stackrel{\gamma_A}{\longrightarrow} \Omega_A \stackrel{\rho_A}{\longleftarrow} R_A)
         \text{\hspace{1em}implies\hspace{1em}}
         \sigma \cdot \alpha \in \SyzC( B \stackrel{\gamma_B}{\longrightarrow} \Omega_B \stackrel{\rho_B}{\longleftarrow} R_B).
        \]
        We call this the \textbf{well-definedness property} of the given morphism $\alpha$ in $\PC$
        w.r.t.\ the source $(A \stackrel{\gamma_A}{\longrightarrow} \Omega_A \stackrel{\rho_A}{\longleftarrow} R_A)$
        and range $(B \stackrel{\gamma_B}{\longrightarrow} \Omega_B \stackrel{\rho_B}{\longleftarrow} R_B)$.
        \label{definition:aux_cat_2}
        
    \item Composition and identities are inherited from $\PC$.
 \end{theoremenumerate}
\end{definition}

\begin{remark}
 If $\PC$ has decidable syzygy inclusion, then we can decide the well-definedness property by testing
        \[
         \SyzC( A \stackrel{\gamma_A}{\longrightarrow} \Omega_A \stackrel{\rho_A}{\longleftarrow} R_A)
         \subseteq
         \SyzC( A \stackrel{\alpha \cdot \gamma_B}{\longrightarrow} \Omega_B \stackrel{\rho_B}{\longleftarrow} R_B).
        \]
\end{remark}

\begin{remark}
 Composition in $\ACIC( \PC )$ is well-defined, i.e.,
 the composition of two morphisms satisfying the well-definedness property
 again satisfies the well-definedness property.
 Indeed, given two well-defined morphisms
 \[
  (A \stackrel{\gamma_A}{\longrightarrow} \Omega_A \stackrel{\rho_A}{\longleftarrow} R_A)
  \stackrel{\alpha}{\longrightarrow}
  (B \stackrel{\gamma_B}{\longrightarrow} \Omega_B \stackrel{\rho_B}{\longleftarrow} R_B)
 \]
 and
 \[
  (B \stackrel{\gamma_B}{\longrightarrow} \Omega_B \stackrel{\rho_B}{\longleftarrow} R_B)
  \stackrel{\beta}{\longrightarrow}
  (C \stackrel{\gamma_C}{\longrightarrow} \Omega_C \stackrel{\rho_C}{\longleftarrow} R_C),
 \]
 then any syzygy 
 \[\sigma \in \SyzC(A \stackrel{\gamma_A}{\longrightarrow} \Omega_A \stackrel{\rho_A}{\longleftarrow} R_A)\]
 defines a syzygy
 \[\sigma \cdot \alpha \in \SyzC(B \stackrel{\gamma_B}{\longrightarrow} \Omega_B \stackrel{\rho_B}{\longleftarrow} R_B),\]
 which in turn defines a syzygy
 \[\sigma \cdot \alpha \cdot \beta \in \SyzC(C \stackrel{\gamma_C}{\longrightarrow} \Omega_C \stackrel{\rho_C}{\longleftarrow} R_C).\]
\end{remark}

\begin{remark}
 Addition of morphisms in $\ACIC( \PC )$ is well-defined.
 Two well-defined morphisms
 \[
  (A \stackrel{\gamma_A}{\longrightarrow} \Omega_A \stackrel{\rho_A}{\longleftarrow} R_A)
  \stackrel{\alpha}{\longrightarrow}
  (B \stackrel{\gamma_B}{\longrightarrow} \Omega_B \stackrel{\rho_B}{\longleftarrow} R_B)
 \]
 and
 \[
  (A \stackrel{\gamma_A}{\longrightarrow} \Omega_A \stackrel{\rho_A}{\longleftarrow} R_A)
  \stackrel{\alpha'}{\longrightarrow}
  (B \stackrel{\gamma_B}{\longrightarrow} \Omega_B \stackrel{\rho_B}{\longleftarrow} R_B)
 \]
 yield a well-defined morphism
 \[
  (A \stackrel{\gamma_A}{\longrightarrow} \Omega_A \stackrel{\rho_A}{\longleftarrow} R_A)
  \stackrel{\alpha + \alpha'}{\longrightarrow}
  (B \stackrel{\gamma_B}{\longrightarrow} \Omega_B \stackrel{\rho_B}{\longleftarrow} R_B),
 \]
 since the sum of two syzygies having the same source is again a syzygy (simply by adding their syzygy witnesses).
 The same holds for subtraction.
 Moreover, the zero morphism
 \[
  (A \stackrel{\gamma_A}{\longrightarrow} \Omega_A \stackrel{\rho_A}{\longleftarrow} R_A)
  \stackrel{0}{\longrightarrow}
  (B \stackrel{\gamma_B}{\longrightarrow} \Omega_B \stackrel{\rho_B}{\longleftarrow} R_B)
 \]
 is always well-defined, since any morphism $S \stackrel{0}{\rightarrow} B$ is a syzygy
 in $\SyzC(B \stackrel{\gamma_B}{\longrightarrow} \Omega_B \stackrel{\rho_B}{\longleftarrow} R_B)$.
 It follows that $\ACIC( \PC )$ is an $\Ab$-category, i.e., enriched over abelian groups.
\end{remark}

\begin{remark}
 Let
 $(A_i \stackrel{\gamma_{A_i}}{\longrightarrow} \Omega_{A_i} \stackrel{\rho_{A_i}}{\longleftarrow} R_{A_i})_{i \in I}$
 be a family of objects in $\PC$
 indexed by a finite  set $I$.
 The injections $A_j \stackrel{\iota_j}{\rightarrow} \bigoplus_i A_i$ in $\PC$
%  and projections
%  $\bigoplus_i A_i \stackrel{\pi_j}{\rightarrow} A_j$ in $\PC$ for $j \in I$
 induce well-defined morphisms in $\ACIC( \PC )$
 \[
  (A_j \stackrel{\gamma_{A_j}}{\longrightarrow} \Omega_{A_j} \stackrel{\rho_{A_j}}{\longleftarrow} R_{A_j})
  \stackrel{\iota_j}{\longrightarrow }
  (\bigoplus_i A_i \stackrel{\bigoplus_i \gamma_{A_i}}{\longrightarrow} \bigoplus_i \Omega_{A_i} \stackrel{\bigoplus_i \rho_{A_i}}{\longleftarrow} \bigoplus_i R_{A_i}),
 \]
 since any syzygy $\sigma$ of the source 
 with witness $\omega$ 
 defines a syzygy $\sigma \cdot \iota_j$ of the range with witness $\omega \cdot \iota_j$.
 
 Similarly,
 the projections
  $\bigoplus_i A_i \stackrel{\pi_j}{\rightarrow} A_j$ in $\PC$ for $j \in I$
 induce well-defined morphisms
 \[
  (\bigoplus_i A_i \stackrel{\bigoplus_i \gamma_{A_i}}{\longrightarrow} \bigoplus_i \Omega_{A_i} \stackrel{\bigoplus_i \rho_{A_i}}{\longleftarrow} \bigoplus_i R_{A_i})
  \stackrel{\pi_j}{\longrightarrow }
  (A_j \stackrel{\gamma_{A_j}}{\longrightarrow} \Omega_{A_j} \stackrel{\rho_{A_j}}{\longleftarrow} R_{A_j})
 \]
 since any syzygy $\sigma$ of the source 
 with witness $\omega$ 
 defines a syzygy $\sigma \cdot \pi_j$ of the range with witness $\omega \cdot \pi_j$.
 Thus, $\ACIC( \PC )$ is also an additive category.
\end{remark}

\begin{theoremdefinition}\label{theorem:ideal_in_aux}
 Let $\mathbf{I}(\PC)$ denote the collection of all morphisms 
 \[\alpha: (A \stackrel{\gamma_A}{\longrightarrow} \Omega_A \stackrel{\rho_A}{\longleftarrow} R_A)
  \rightarrow
  (B \stackrel{\gamma_B}{\longrightarrow} \Omega_B \stackrel{\rho_B}{\longleftarrow} R_B)
 \]
 in $\ACIC( \PC )$
 such that $\alpha: A \rightarrow B$ is a syzygy in $\SyzC(B \stackrel{\gamma_B}{\longrightarrow} \Omega_B \stackrel{\rho_B}{\longleftarrow} R_B)$,
 i.e.,
 with the property that there exists a lift $\zeta$ (which we call witness for being \textbf{z}ero) rendering the diagram
  \begin{center}
    \begin{tikzpicture}[label/.style={postaction={
      decorate,
      decoration={markings, mark=at position .5 with \node #1;}}}, baseline = (D),
      mylabel/.style={thick, draw=none, align=center, minimum width=0.5cm, minimum height=0.5cm,fill=white}]
        \coordinate (r) at (2.5,0);
        \coordinate (u) at (0,1.2);
        \node (A) {$A$};
        \node (B) at ($(A)+1*(r)$) {};
        \node (C) at ($(B) +(r)$) {};
        \node (A2) at ($(A) - (u)$) {$B$};
        \node (B2) at ($(B) - (u)$) {$\Omega_B$};
        \node (C2) at ($(C) - (u)$) {$R_B$};
        \draw[->,thick] (A2) -- node[below]{$\gamma_B$} (B2);
        \draw[->,thick] (C2) -- node[below]{$\rho_B$} (B2);
        \draw[->,thick,dashed] (A) -- node[above]{$\zeta$} (C2);
        \draw[->,thick] (A) --node[left]{$\alpha$} (A2);
    \end{tikzpicture}
  \end{center}
  commutative.
  Then $\mathbf{I}(\PC)$ forms an ideal of $\ACIC( \PC )$.
\end{theoremdefinition}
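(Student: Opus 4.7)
The plan is to verify the standard axioms of an ideal in an $\Ab$-enriched category: namely, that $\mathbf{I}(\PC)$ is closed under addition (and contains the zero morphism) between any fixed pair of objects, and that it absorbs composition on both sides with arbitrary morphisms of $\ACIC(\PC)$. In each case, it will be enough to construct an appropriate zero-witness $\zeta$ from the data at hand.

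First I would handle the additive structure. Given $\alpha,\alpha' \in \mathbf{I}(\PC)$ with zero-witnesses $\zeta,\zeta': A \to R_B$ satisfying $\alpha \cdot \gamma_B = \zeta \cdot \rho_B$ and $\alpha' \cdot \gamma_B = \zeta' \cdot \rho_B$, the morphism $\zeta + \zeta'$ is a witness for $\alpha + \alpha'$, and $-\zeta$ for $-\alpha$; the choice $\zeta = 0$ witnesses the zero morphism. Hence, for any fixed source and range, $\mathbf{I}(\PC)$ is a subgroup of the $\Hom$-group in $\ACIC(\PC)$.

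Next I would show left absorption. Suppose $\beta: (X,\gamma_X,\rho_X) \to (A,\gamma_A,\rho_A)$ is any morphism in $\ACIC(\PC)$ and $\alpha \in \mathbf{I}(\PC)$ has witness $\zeta$. Then
\[
(\beta \cdot \alpha) \cdot \gamma_B = \beta \cdot (\alpha \cdot \gamma_B) = \beta \cdot \zeta \cdot \rho_B,
\]
so $\beta \cdot \zeta$ is a zero-witness for $\beta \cdot \alpha$. This step uses nothing about $\beta$ beyond being a morphism of $\PC$.

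The only non-formal step is right absorption, and here the well-definedness of the composing morphism is essential. Suppose $\alpha: (A,\gamma_A,\rho_A) \to (B,\gamma_B,\rho_B)$ lies in $\mathbf{I}(\PC)$ with witness $\zeta: A \to R_B$, and let $\beta: (B,\gamma_B,\rho_B) \to (C,\gamma_C,\rho_C)$ be any morphism of $\ACIC(\PC)$. The witness $\zeta$ says precisely that $\alpha$, viewed as $A \to B$, is a syzygy in $\SyzC(B \xrightarrow{\gamma_B} \Omega_B \xleftarrow{\rho_B} R_B)$. By the well-definedness property of $\beta$ (Definition \ref{definition:aux_cat_2}), $\alpha \cdot \beta$ is then a syzygy in $\SyzC(C \xrightarrow{\gamma_C} \Omega_C \xleftarrow{\rho_C} R_C)$, which by definition means the existence of a witness $\zeta': A \to R_C$ with $(\alpha \cdot \beta)\cdot \gamma_C = \zeta' \cdot \rho_C$. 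This is exactly the condition for $\alpha \cdot \beta \in \mathbf{I}(\PC)$. Combining the three steps establishes the ideal property; no further obstacle is expected, since the argument is purely diagrammatic.
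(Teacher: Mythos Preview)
Your proof is correct and follows essentially the same approach as the paper's: verify closure under addition by adding witnesses, handle absorption on one side by precomposing the witness, and handle absorption on the other side via the well-definedness property of the composing morphism. The only (cosmetic) difference is that the paper's proof does not explicitly mention negatives, since closure under subtraction follows once one has closure under addition and the zero morphism.
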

\begin{proof}
 Clearly, all zero morphisms lie in $\mathbf{I}(\PC)$.
 Moreover, given addable morphisms $\alpha, \beta \in \mathbf{I}(\PC)$,
 we can add their witnesses for being zero to deduce $\alpha + \beta \in \mathbf{I}(\PC)$.
 
 Next, let
 \[
  (A \stackrel{\gamma_A}{\longrightarrow} \Omega_A \stackrel{\rho_A}{\longleftarrow} R_A)
  \stackrel{\alpha}{\longrightarrow}
  (B \stackrel{\gamma_B}{\longrightarrow} \Omega_B \stackrel{\rho_B}{\longleftarrow} R_B)
 \]
 and
 \[
  (B \stackrel{\gamma_B}{\longrightarrow} \Omega_B \stackrel{\rho_B}{\longleftarrow} R_B)
  \stackrel{\beta}{\longrightarrow}
  (C \stackrel{\gamma_C}{\longrightarrow} \Omega_C \stackrel{\rho_C}{\longleftarrow} R_C)
 \]
 be two composable morphisms in $\ACIC( \PC )$.
 If $\beta \in \mathbf{I}(\PC)$ with $\zeta$ a witness for being zero,
 then $\alpha \cdot \beta \in \mathbf{I}(\PC)$
 with $\alpha \cdot \zeta$ a witness for being zero.
 
 If $\alpha \in \mathbf{I}(\PC)$, then
 $\alpha$ is a syzygy in $\SyzC(B \stackrel{\gamma_B}{\longrightarrow} \Omega_B \stackrel{\rho_B}{\longleftarrow} R_B)$.
 By the well-definedness property of $\beta$,
 $\alpha \cdot \beta$ is a syzygy of $\SyzC(C \stackrel{\gamma_C}{\longrightarrow} \Omega_C \stackrel{\rho_C}{\longleftarrow} R_C)$,
 which also implies $\alpha \cdot \beta \in \mathbf{I}(\PC)$.
 Thus, $\mathbf{I}(\PC)$ is a collection of abelian subgroups
 closed under left and right multiplication, or in other words, an ideal of $\ACIC( \PC )$.
\end{proof}

\subsection{Definition of the category $\CIC(\PC)$}

Recall that for any additive category $\AC$ and any ideal $\mathbf{I}$ of $\AC$,
the additive quotient category $\AC/ \mathbf{I}$ has the same objects as $\AC$,
and 
\[
 \Hom_{\AC/ \mathbf{I}}( A, B ) := \Hom_{\AC}( A, B )/ \{ A \stackrel{\alpha}{\rightarrow} B \mid \alpha \in \mathbf{I} \}
\]
for all $A, B \in \AC/ \mathbf{I}$.

\begin{definition}
 We set
 \[
  \CIC(\PC) := \ACIC( \PC )/ \mathbf{I}(\PC),
 \]
 i.e., we form the additive quotient category of $\ACIC( \PC )$ by the ideal $\mathbf{I}(\PC)$.
\end{definition}

\begin{remark}
Morally, we shall think of an object 
$(A \stackrel{\gamma_A}{\longrightarrow} \Omega_A \stackrel{\rho_A}{\longleftarrow} R_A)$
in $\CIC( \PC )$ as a representation
of the quotient object ``$\frac{\im( \gamma_A )}{\im( \rho_A )}$''.
The ``Q'' in $\CIC(\PC)$ stands for \emph{quotient}.
\end{remark}

\begin{remark}
 If $\PC$ has decidable syzygy inclusion,
 then we can decide equality of morphisms in $\CIC(\PC)$.
 Deciding equality of two morphisms $\alpha$ and $\beta$ in $\CIC( \PC )$ means
 deciding whether $\alpha - \beta$ is zero, which
 is a lifting problem, which we can solve using Remark \ref{remark:decidable_lifts}.
\end{remark}

\begin{notation}
 Given a morphism
 $
  \alpha: (A \stackrel{\gamma_A}{\longrightarrow}\Omega_A \stackrel{\rho_A}{\longleftarrow} R_A) \longrightarrow (B \stackrel{\gamma_B}{\longrightarrow}\Omega_B \stackrel{\rho_B}{\longleftarrow} R_B)
 $
 in $\ACIC( \PC )$,
 we denote by
 \[
  \overline{\alpha}: (A \stackrel{\gamma_A}{\longrightarrow}\Omega_A \stackrel{\rho_A}{\longleftarrow} R_A) \longrightarrow (B \stackrel{\gamma_B}{\longrightarrow}\Omega_B \stackrel{\rho_B}{\longleftarrow} R_B)
 \]
 the corresponding morphism in $\CIC(\PC)$.
\end{notation}

\begin{construction}
 We construct a full and faithful additive functor 
 \[
 \emb: \PC \rightarrow \CIC(\PC)
 \]
 that identifies $\PC$ as a full subcategory of $\CIC(\PC)$.
 On objects, we set
 \[
  A \mapsto (A \stackrel{\id}{\rightarrow} A \leftarrow 0)
 \]
 and on morphisms, we set
 \[
  (A \stackrel{\alpha}{\longrightarrow} B) \mapsto (\emb(A) \stackrel{\overline{\alpha}}{\longrightarrow} \emb(B)).
 \]
\end{construction}
\begin{proof}[Correctness of the construction]
 Syzygies in $\SyzC(A \stackrel{\id}{\rightarrow} A \leftarrow 0)$ 
 are of the form $S \stackrel{0}{\rightarrow} A$.
\end{proof}

The objects in $\PC$ yield
a convenient way to cover the objects in $\CIC( \PC )$.

\begin{lemma}\label{lemma:convenient_covers}
 Identities of objects in $\PC$ yield well-defined epimorphisms in $\CIC( \PC )$:
 \[
  \emb(A) \stackrel{\overline{\id_A}}{\longrightarrow} (A \stackrel{\gamma_A}{\longrightarrow}\Omega_A \stackrel{\rho_A}{\longleftarrow} R_A).
 \]
\end{lemma}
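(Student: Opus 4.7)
The plan is to establish two things for the morphism $\overline{\id_A}\colon \emb(A)=(A\stackrel{\id}{\to}A\leftarrow 0)\to (A\stackrel{\gamma_A}{\to}\Omega_A\stackrel{\rho_A}{\leftarrow}R_A)$: first, that the underlying morphism $\id_A$ satisfies the well-definedness property, so that $\overline{\id_A}$ really is a morphism in $\CIC(\PC)$; second, that $\overline{\id_A}$ is an epimorphism there.

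For well-definedness, I would analyse the syzygies of the source. A syzygy $\sigma\colon S\to A$ in $\SyzC(A\stackrel{\id}{\to}A\leftarrow 0)$ requires a witness $\omega\colon S\to 0$ with $\sigma\cdot \id_A=\omega\cdot(0\to A)$, so necessarily $\sigma=0$. Then $\sigma\cdot \id_A=0$, and the zero morphism with zero witness belongs to $\SyzC(A\stackrel{\gamma_A}{\to}\Omega_A\stackrel{\rho_A}{\leftarrow}R_A)$. This already appeared in the correctness proof of $\emb$ and I would simply invoke it.

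For the epimorphism property, suppose $\overline{\beta}\colon(A\stackrel{\gamma_A}{\to}\Omega_A\stackrel{\rho_A}{\leftarrow}R_A)\to (B\stackrel{\gamma_B}{\to}\Omega_B\stackrel{\rho_B}{\leftarrow}R_B)$ satisfies $\overline{\id_A}\cdot\overline{\beta}=0$ in $\CIC(\PC)$. On the level of $\ACIC(\PC)$, this composition is represented by the morphism $\id_A\cdot\beta=\beta\colon A\to B$ regarded as a morphism from $\emb(A)$ to the target cospan. By definition of the quotient by $\mathbf{I}(\PC)$ (Theorem and Definition~\ref{theorem:ideal_in_aux}), this morphism lies in $\mathbf{I}(\PC)$, so there exists a witness $\zeta\colon A\to R_B$ with $\beta\cdot\gamma_B=\zeta\cdot\rho_B$. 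The crucial observation, and really the only content of the proof, is that the condition defining $\mathbf{I}(\PC)$ depends solely on the range cospan $(B\stackrel{\gamma_B}{\to}\Omega_B\stackrel{\rho_B}{\leftarrow}R_B)$ and not on the source cospan at all. Hence the very same $\zeta$ certifies that $\beta$, now viewed as a morphism from $(A\stackrel{\gamma_A}{\to}\Omega_A\stackrel{\rho_A}{\leftarrow}R_A)$ to the range, also lies in $\mathbf{I}(\PC)$, i.e.\ $\overline{\beta}=0$ in $\CIC(\PC)$.

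There is no genuine obstacle; the proof is essentially a bookkeeping exercise about how identities compose and how membership in $\mathbf{I}(\PC)$ is insensitive to the source cospan. The only thing to be mildly careful about is distinguishing the two incarnations of $\beta$ as morphisms in $\ACIC(\PC)$ with different sources but the same underlying arrow in $\PC$, and noting that the zero-witness $\zeta$ transfers unchanged between them.
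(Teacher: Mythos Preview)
Your proof is correct and follows essentially the same approach as the paper. The paper handles well-definedness with the same observation (syzygies of $\emb(A)$ are zero) and, for the epimorphism part, defers to a slightly more general Lemma~\ref{lemma:epis} stating that any morphism of the form $\overline{\id_B}$ between two cospans with the same first object is an epimorphism; the proof of that lemma is exactly your argument that the zero-witness $\zeta$ depends only on the range cospan, so you have simply inlined what the paper factors out.
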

\begin{proof}
Well-definedness is trivial since syzygy witnesses can be given by zero morphisms.
Moreover, being an epimorphism follows from Lemma \ref{lemma:epis}.
\end{proof}

\begin{lemma}\label{lemma:epis}
  Every morphism in $\CIC(\PC)$ of the form
  \[
   \overline{\id_B}: (B \stackrel{\gamma_B}{\longrightarrow}\Omega_B \stackrel{\rho_B}{\longleftarrow} R_B) \longrightarrow (B \stackrel{\gamma_B'}{\longrightarrow}\Omega_B' \stackrel{\rho_B'}{\longleftarrow} R_B')
  \]
  is an epimorphism.
 \end{lemma}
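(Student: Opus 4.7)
The plan is to unwind the definitions of composition and of the ideal $\mathbf{I}(\PC)$ and observe that the zero-condition on a morphism is intrinsic to the range cospan and the underlying morphism in $\PC$, independent of the source cospan. This makes the lemma essentially a matter of bookkeeping.

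Explicitly, I would start by taking two parallel morphisms
\[
\overline{\tau}, \overline{\tau'}: (B \stackrel{\gamma_B'}{\longrightarrow}\Omega_B' \stackrel{\rho_B'}{\longleftarrow} R_B') \longrightarrow (T \stackrel{\gamma_T}{\longrightarrow}\Omega_T \stackrel{\rho_T}{\longleftarrow} R_T)
\]
in $\CIC(\PC)$ satisfying $\overline{\id_B} \cdot \overline{\tau} = \overline{\id_B} \cdot \overline{\tau'}$. Since composition in $\ACIC(\PC)$ is inherited from $\PC$, the underlying morphism of $\overline{\id_B} \cdot \overline{\tau}$ in $\PC$ is just $\tau$, and similarly for $\tau'$. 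Hence the assumed equation, read in $\CIC(\PC)$ with source $(B,\gamma_B,\rho_B)$ and range $(T,\gamma_T,\rho_T)$, amounts to
\[
\tau - \tau' \in \mathbf{I}(\PC)
\]
as a morphism with these source and range data.

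The key observation is now that the defining condition for membership in $\mathbf{I}(\PC)$ from Theorem/Definition \ref{theorem:ideal_in_aux}, namely that $\tau - \tau': B \to T$ be a syzygy in $\SyzC(T \stackrel{\gamma_T}{\longrightarrow}\Omega_T \stackrel{\rho_T}{\longleftarrow} R_T)$, makes no reference whatsoever to the source cospan. In particular, the very same witness $\zeta: B \to R_T$ with $\zeta \cdot \rho_T = (\tau - \tau') \cdot \gamma_T$ also shows
\[
\tau - \tau' \in \mathbf{I}(\PC)
\]
when considered as a morphism with source $(B,\gamma_B',\rho_B')$ and range $(T,\gamma_T,\rho_T)$. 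Therefore $\overline{\tau} = \overline{\tau'}$ in $\CIC(\PC)$, which is exactly the epimorphism property for $\overline{\id_B}$.

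There is no real obstacle here; the entire content of the proof is to notice that the ideal $\mathbf{I}(\PC)$ is defined by a property of the morphism and its range alone, so cancellation of the pre-composed $\overline{\id_B}$ is automatic. No appeal to decidable syzygy inclusion or any constructive hypothesis is needed.
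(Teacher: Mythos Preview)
Your proof is correct and follows essentially the same approach as the paper: both hinge on the observation that membership in $\mathbf{I}(\PC)$ depends only on the underlying morphism in $\PC$ and the range cospan, not on the source cospan. The only cosmetic difference is that the paper uses the additive shortcut of testing a single $\overline{\tau}$ with $\overline{\id_B}\cdot\overline{\tau}=0$ rather than two parallel morphisms, but the content is identical.
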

 \begin{proof}
  Given a morphism
  \[
   \overline{\tau}: (B \stackrel{\gamma_B'}{\longrightarrow}\Omega_B' \stackrel{\rho_B'}{\longleftarrow} R_B') \longrightarrow (T \stackrel{\gamma_T}{\longrightarrow}\Omega_T \stackrel{\rho_T}{\longleftarrow} R_T)
  \]
  such that $\overline{\id_B} \cdot \overline{\tau} = 0$,
  this means that there exists $\zeta: B \rightarrow R_T$
  such that
  \[
   \zeta \cdot \rho_T = \id_B \cdot \tau \cdot \gamma_T = \tau \cdot \gamma_T,
  \]
  which implies that $\overline{\tau}$ is already zero.
 \end{proof}
 
\subsection{Cokernels}

As a first main feature of $\CIC(\PC)$, we show how to construct
cokernels.

\begin{construction}[Cokernels]\label{construction:cokernel}
 Given a morphism
 \[
  \overline{\alpha}: (A \stackrel{\gamma_A}{\longrightarrow}\Omega_A \stackrel{\rho_A}{\longleftarrow} R_A) \longrightarrow (B \stackrel{\gamma_B}{\longrightarrow}\Omega_B \stackrel{\rho_B}{\longleftarrow} R_B)
 \]
 in $\CIC(\PC)$, the following diagram
 shows us how to construct its cokernel projection along with the universal property:
 \begin{equation}\label{equation:cokernel_diagram}
    \begin{tikzpicture}[label/.style={postaction={
            decorate,
            decoration={markings, mark=at position .5 with \node #1;}}}, baseline = (A),
            mylabel/.style={thick, draw=none, align=center, minimum width=0.5cm, minimum height=0.5cm,fill=white}]
          \coordinate (r) at (5.7,0);
          \coordinate (u) at (0,2);
          \node (A) {$(A \stackrel{\gamma_A}{\longrightarrow}\Omega_A \stackrel{\rho_A}{\longleftarrow} R_A)$};
          \node (B) at ($(A)+0.8*(r)$) {$(B \stackrel{\gamma_B}{\longrightarrow}\Omega_B \stackrel{\rho_B}{\longleftarrow} R_B)$};
          \node (C) at ($(B) + (r) + (u)$) {$(B \stackrel{\gamma_B}{\longrightarrow}\Omega_B \stackrel{\pmatcol{\rho_B}{\alpha \cdot \gamma_B}}{\longleftarrow} R_B \oplus A)$};
          \node (T) at ($(B) + (r) - (u)$) {$(T \stackrel{\gamma_T}{\longrightarrow}\Omega_T \stackrel{\rho_T}{\longleftarrow} R_T)$.};
          \draw[->,thick] (A) --node[above]{$\overline{\alpha}$} (B);
          \draw[->,thick] (B) --node[below,yshift=-0.1em,xshift=-1em]{$\overline{\tau}$} (T);
          \draw[->,thick] (B) --node[above]{$\overline{\id_B}$} (C);
          \draw[->,thick, dashed] (C) --node[right]{$\overline{\tau}$} (T);
          \node (Ae) at ($(A)+(-1.4,-0.15)$) {};
          \node (Te) at ($(T)-(-1.05,0.15)$) {};
          \draw[bend right,->,thick,dotted,label={[above]{$\zeta$}},out=360-25,in=360-155] (Ae) to (Te);
    \end{tikzpicture}
 \end{equation}
 How to read this diagram: the solid arrow pointing up right is the cokernel projection,
 the solid arrow pointing down right is a test morphism for the universal property of the cokernel,
 and the dashed arrow pointing straight down is the morphism induced by the universal property.
 The dotted arrow labeled with $\zeta$ is a witness for the composition $\overline{\alpha} \cdot \overline{\tau}$ in $\CIC(\PC)$ being zero,
 i.e., it denotes a morphism $\zeta: A \rightarrow R_T$ such that $\zeta \cdot \rho_T = \alpha \cdot \tau \cdot \gamma_T$.
\end{construction}
\begin{proof}[Correctness of the construction]
 Clearly, the morphism $\overline{\id_B}$ for the cokernel projection is well-defined,
 since syzygy witnesses 
 of objects in $\SyzC(B \stackrel{\gamma_B}{\longrightarrow}\Omega_B \stackrel{\rho_B}{\longleftarrow} R_B)$
 can simply be extended by the natural inclusion morphism $R_B \rightarrow R_B \oplus A$.
 Composing $\overline{\alpha}$ with the cokernel projection yields zero
  since we can take the natural inclusion $A \rightarrow R_B \oplus A$ as a witness for being zero.
 Next, we have to check well-definedness of the cokernel induced morphism.
 Given a syzygy 
 \begin{center}
          \begin{tikzpicture}[label/.style={postaction={
            decorate,
            decoration={markings, mark=at position .5 with \node #1;}}}, baseline = (D),
            mylabel/.style={thick, draw=none, align=center, minimum width=0.5cm, minimum height=0.5cm,fill=white}]
              \coordinate (r) at (2.5,0);
              \coordinate (u) at (0,1.2);
              \node (A) {$B$};
              \node (B) at ($(A)+1*(r)$) {$\Omega_B$};
              \node (C) at ($(B) +(r)$) {$R_B \oplus A$,};
              \node (D) at ($(A) + (u)$) {$S$};
              \draw[->,thick] (A) -- node[below]{$\gamma_B$} (B);
              \draw[->,thick] (C) -- node[below]{$\pmatcol{\rho_B}{\alpha \cdot \gamma_B}$} (B);
              \draw[->,thick] (D) -- node[left]{$\sigma$} (A);
              \draw[->,thick,dashed] (D) -- node[above,yshift=0.3em]{$\pmatrow{\lambda_1}{\lambda_2}$} (C);
          \end{tikzpicture}
 \end{center}
 we can construct another one:
 \begin{center}
          \begin{tikzpicture}[label/.style={postaction={
            decorate,
            decoration={markings, mark=at position .5 with \node #1;}}}, baseline = (D),
            mylabel/.style={thick, draw=none, align=center, minimum width=0.5cm, minimum height=0.5cm,fill=white}]
              \coordinate (r) at (2.5,0);
              \coordinate (u) at (0,1.2);
              \node (A) {$B$};
              \node (B) at ($(A)+1*(r)$) {$\Omega_B$};
              \node (C) at ($(B) +(r)$) {$R_B$.};
              \node (D) at ($(A) + (u)$) {$S$};
              \draw[->,thick] (A) -- node[below]{$\gamma_B$} (B);
              \draw[->,thick] (C) -- node[below]{$\rho_B$} (B);
              \draw[->,thick] (D) -- node[left]{$\sigma - \lambda_2 \cdot \alpha$} (A);
              \draw[->,thick,dashed] (D) -- node[above,yshift=0.3em]{$\lambda_1$} (C);
          \end{tikzpicture}
 \end{center}
 Now, applying the well-definedness property of the test morphism, 
 we obtain the syzygy
 \begin{equation}\label{equation:1}
%   \begin{center}
          \begin{tikzpicture}[label/.style={postaction={
            decorate,
            decoration={markings, mark=at position .5 with \node #1;}}}, baseline = (A),
            mylabel/.style={thick, draw=none, align=center, minimum width=0.5cm, minimum height=0.5cm,fill=white}]
              \coordinate (r) at (2.5,0);
              \coordinate (u) at (0,1.2);
              \node (A) {$B$};
              \node (B) at ($(A)+1*(r)$) {};
              \node (C) at ($(B) +(r)$) {};
              \node (D) at ($(A) + (u)$) {$S$};
              \node (A2) at ($(A) - (u)$) {$T$};
              \node (B2) at ($(B) - (u)$) {$\Omega_T$};
              \node (C2) at ($(C) - (u)$) {$R_T$.};
              \draw[->,thick] (A2) -- node[below]{$\gamma_T$} (B2);
              \draw[->,thick] (C2) -- node[below]{$\rho_T$} (B2);
              \draw[->,thick] (D) --node[left]{$\sigma - \lambda_2 \cdot \alpha$}  (A);
              \draw[->,thick,dashed] (D) --node[right,xshift=0.5em,yshift=0.3em]{$\lambda_3$} (C2);
              \draw[->,thick] (A) --node[left]{$\tau$} (A2);
          \end{tikzpicture}
%   \end{center}
\end{equation}
We deduce that
  \begin{center}
          \begin{tikzpicture}[label/.style={postaction={
            decorate,
            decoration={markings, mark=at position .5 with \node #1;}}}, baseline = (D),
            mylabel/.style={thick, draw=none, align=center, minimum width=0.5cm, minimum height=0.5cm,fill=white}]
              \coordinate (r) at (2.5,0);
              \coordinate (u) at (0,1.2);
              \node (A) {$B$};
              \node (B) at ($(A)+1*(r)$) {};
              \node (C) at ($(B) +(r)$) {};
              \node (D) at ($(A) + (u)$) {$S$};
              \node (A2) at ($(A) - (u)$) {$T$};
              \node (B2) at ($(B) - (u)$) {$\Omega_T$};
              \node (C2) at ($(C) - (u)$) {$R_T$};
              \draw[->,thick] (A2) -- node[below]{$\gamma_T$} (B2);
              \draw[->,thick] (C2) -- node[below]{$\rho_T$} (B2);
              \draw[->,thick] (D) --node[left]{$\sigma$}  (A);
              \draw[->,thick,dashed] (D) --node[right,xshift=0.5em,yshift=0.3em]{$\lambda_2 \cdot \zeta + \lambda_3$} (C2);
              \draw[->,thick] (A) --node[left]{$\tau$} (A2);
          \end{tikzpicture}
  \end{center}
  is a syzygy by computing
  \begin{align*}
   \sigma \cdot \tau \cdot \gamma_T &= \lambda_2 \cdot \alpha \cdot \tau \cdot \gamma_T + \lambda_3 \cdot \rho_T &\text{using \eqref{equation:1} }\\
   &= \lambda_2 \cdot \zeta \cdot \rho_T + \lambda_3 \cdot \rho_T &\text{using the defining equation of $\zeta$}\\
   & = (\lambda_2 \cdot \zeta + \lambda_3) \cdot \rho_T. &
  \end{align*}
  Thus, the cokernel induced morphism is well-defined
  and it clearly renders the triangle in \eqref{equation:cokernel_diagram} commutative.
  For the uniqueness of the induced morphism, it suffices to check that the cokernel projection is an epimorphism,
  which is the content of Lemma \ref{lemma:epis}.
\end{proof}

\subsection{Lifts along monomorphisms}

We show that every monomorphism in $\CIC( \PC )$
is the kernel of its cokernel by means of the following construction.

\begin{construction}[Lifts along monomorphisms]\label{construction:lift_along_mono}
The following diagram shows us how to construct a lift along 
a given monomorphism
\[
  \overline{\alpha}: (A \stackrel{\gamma_A}{\longrightarrow}\Omega_A \stackrel{\rho_A}{\longleftarrow} R_A) \longrightarrow (B \stackrel{\gamma_B}{\longrightarrow}\Omega_B \stackrel{\rho_B}{\longleftarrow} R_B)
 \]
 in $\CIC(\PC)$
 for a given test morphism:
 \begin{center}
    \begin{tikzpicture}[label/.style={postaction={
          decorate,
          decoration={markings, mark=at position .5 with \node #1;}},
          mylabel/.style={thick, draw=black, align=center, minimum width=0.5cm, minimum height=0.5cm,fill=white}}]
          \coordinate (r) at (5.5,0);
          \coordinate (u) at (0,2);
          \node (A) {$(B \stackrel{\gamma_B}{\longrightarrow}\Omega_B \stackrel{\rho_B}{\longleftarrow} R_B)$};
          \node (B) at ($(A)+1.09*(r)$) {$(B \stackrel{\gamma_B}{\longrightarrow}\Omega_B \smash{\stackrel{\begin{pmatrix}\rho_B \\ \alpha \cdot \gamma_B\end{pmatrix}}{\longleftarrow}} R_B \oplus A)$.};
          \node (K) at ($(A) - 1*(r) + (u)$) {$(A \stackrel{\gamma_A}{\longrightarrow}\Omega_A \stackrel{\rho_A}{\longleftarrow} R_A)$};
          \node (T) at ($(A) - 1*(r) - (u)$) {$(T \stackrel{\gamma_T}{\longrightarrow}\Omega_T \stackrel{\rho_T}{\longleftarrow} R_T)$};
          \draw[->,thick] (A) --node[above]{$\overline{\id_B}$} (B);
          \draw[->,thick] (T) --node[below,yshift=-0.2em]{$\overline{\tau}$} (A);
          \draw[->,thick] (K) --node[above,xshift=0em,yshift=0.3em]{$\overline{\alpha}$} (A);
          \draw[->,thick,dashed,label={[mylabel]{$\overline{\zeta_2}$}}] (T) -- (K);
          
          \node (Be) at ($(B)-(-1.7,0.25)$) {};
          \node (Te) at ($(T)+(-1.5,-0.25)$) {};
          \draw[bend right,->,thick,dotted,label={[mylabel]{$\pmatrow{\zeta_1}{\zeta_2}$}},out=360-25,in=360-155] (Te) to (Be);
    \end{tikzpicture}
 \end{center}
 How to read this diagram: the solid horizontal arrow is the cokernel projection of 
 our monomorphism $\overline{\alpha}$ (see Construction \ref{construction:cokernel}).
 The dotted arrow is a witness for the composition of the test morphism $\overline{\tau}$ with the cokernel projection being zero, i.e.,
 the equation
 \begin{equation}\label{equation:lift_along_mono}
  \tau \cdot \gamma_B = \zeta_1 \cdot \rho_B + \zeta_2 \cdot \alpha \cdot \gamma_B
 \end{equation}
 holds.
 The upwards pointing dashed arrow is the desired lift.
\end{construction}
\begin{proof}[Correctness of the construction]
 First, we show that $\overline{\zeta_2}$ is well-defined.
 Given a syzygy
 \begin{center}
          \begin{tikzpicture}[label/.style={postaction={
            decorate,
            decoration={markings, mark=at position .5 with \node #1;}}}, baseline = (D),
            mylabel/.style={thick, draw=none, align=center, minimum width=0.5cm, minimum height=0.5cm,fill=white}]
              \coordinate (r) at (2.5,0);
              \coordinate (u) at (0,1.2);
              \node (A) {$T$};
              \node (B) at ($(A)+1*(r)$) {$\Omega_T$};
              \node (C) at ($(B) +(r)$) {$R_T$,};
              \node (D) at ($(A) + (u)$) {$S$};
              \draw[->,thick] (A) -- node[below]{$\gamma_T$} (B);
              \draw[->,thick] (C) -- node[below]{$\rho_T$} (B);
              \draw[->,thick] (D) -- node[left]{$\sigma$} (A);
              \draw[->,thick,dashed] (D) -- node[above]{$\lambda$} (C);
          \end{tikzpicture}
 \end{center}
 we can use the fact that $\overline{\tau}$ satisfies the well-definedness property
 in order to get a syzygy
 \begin{center}
          \begin{tikzpicture}[label/.style={postaction={
            decorate,
            decoration={markings, mark=at position .5 with \node #1;}}}, baseline = (D),
            mylabel/.style={thick, draw=none, align=center, minimum width=0.5cm, minimum height=0.5cm,fill=white}]
              \coordinate (r) at (2.5,0);
              \coordinate (u) at (0,1.2);
              \node (A) {$B$};
              \node (B) at ($(A)+1*(r)$) {$\Omega_B$};
              \node (C) at ($(B) +(r)$) {$R_B$.};
              \node (D) at ($(A) + (u)$) {$S$};
              \draw[->,thick] (A) -- node[below]{$\gamma_B$} (B);
              \draw[->,thick] (C) -- node[below]{$\rho_B$} (B);
              \draw[->,thick] (D) -- node[left]{$\sigma \cdot \tau$} (A);
              \draw[->,thick,dashed] (D) -- node[above]{$\lambda'$} (C);
          \end{tikzpicture}
 \end{center}
 Using \eqref{equation:lift_along_mono},
 we can construct another syzygy
 \begin{center}
          \begin{tikzpicture}[label/.style={postaction={
            decorate,
            decoration={markings, mark=at position .5 with \node #1;}}}, baseline = (D),
            mylabel/.style={thick, draw=none, align=center, minimum width=0.5cm, minimum height=0.5cm,fill=white}]
              \coordinate (r) at (2.5,0);
              \coordinate (u) at (0,1.2);
              \node (A) {$B$};
              \node (B) at ($(A)+1*(r)$) {$\Omega_B$};
              \node (C) at ($(B) +(r)$) {$R_B$};
              \node (D) at ($(A) + (u)$) {$S$};
              \draw[->,thick] (A) -- node[below]{$\gamma_B$} (B);
              \draw[->,thick] (C) -- node[below]{$\rho_B$} (B);
              \draw[->,thick] (D) -- node[left]{$\sigma \cdot \zeta_2 \cdot \alpha$} (A);
              \draw[->,thick,dashed] (D) -- node[above,xshift=0.5em,yshift=0.3em]{$\lambda' - \sigma \cdot \zeta_1$} (C);
          \end{tikzpicture}
 \end{center}
 whose syzygy witness can also be interpreted as a witness for the composition of
 \[
 \emb(S) \stackrel{\overline{\sigma \cdot \zeta_2}}{\longrightarrow}
 (A \stackrel{\gamma_A}{\longrightarrow}\Omega_A \stackrel{\rho_A}{\longleftarrow} R_A) \stackrel{\overline{\alpha}}{\longrightarrow} (B \stackrel{\gamma_B}{\longrightarrow}\Omega_B \stackrel{\rho_B}{\longleftarrow} R_B)\]
 in $\CIC(\PC)$ being zero.
 Since $\overline{\alpha}$ is a monomorphism, this implies $\overline{\sigma \cdot \zeta_2} = 0$,
 and so we get our desired syzygy:
 \begin{center}
          \begin{tikzpicture}[label/.style={postaction={
            decorate,
            decoration={markings, mark=at position .5 with \node #1;}}}, baseline = (D),
            mylabel/.style={thick, draw=none, align=center, minimum width=0.5cm, minimum height=0.5cm,fill=white}]
              \coordinate (r) at (2.5,0);
              \coordinate (u) at (0,1.2);
              \node (A) {$A$};
              \node (B) at ($(A)+1*(r)$) {$\Omega_A$};
              \node (C) at ($(B) +(r)$) {$R_A$.};
              \node (D) at ($(A) + (u)$) {$S$};
              \draw[->,thick] (A) -- node[below]{$\gamma_A$} (B);
              \draw[->,thick] (C) -- node[below]{$\rho_A$} (B);
              \draw[->,thick] (D) -- node[left]{$\sigma \cdot \zeta_2$} (A);
              \draw[->,thick, dashed] (D) -- (C);
          \end{tikzpicture}
 \end{center}
Furthermore, $\overline{\zeta_2} \cdot \overline{\alpha} = \overline{\tau}$ since
$\eqref{equation:lift_along_mono}$ may be rearranged as
 \[
  ( \zeta_2 \cdot \alpha - \tau ) \cdot \gamma_B = - \zeta_1 \cdot \rho_B.\qedhere
 \]
\end{proof}

\begin{corollary}\label{corollary:epi-mono_iso}
 Every morphism in $\CIC(\PC)$ that is both mono and epi is an isomorphism.
\end{corollary}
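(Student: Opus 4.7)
The plan is to produce a two-sided inverse for the given mono-epi $\overline{\alpha}: X \to Y$ by a single application of Construction~\ref{construction:lift_along_mono}. Let $p: Y \to Z$ denote the cokernel projection of $\overline{\alpha}$ provided by Construction~\ref{construction:cokernel}.

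First I would observe that $p = 0$ in $\CIC(\PC)$. By the defining property of the cokernel one has $\overline{\alpha} \cdot p = 0 = \overline{\alpha} \cdot 0$, and since $\overline{\alpha}$ is assumed to be an epimorphism this left-cancels to $p = 0$. Consequently the composite $\overline{\id_Y} \cdot p = p$ is zero in $\CIC(\PC)$; unpacking the definition of the ideal $\mathbf{I}(\PC)$ from Theorem and Definition~\ref{theorem:ideal_in_aux} this yields an explicit pair $(\zeta_1,\zeta_2)$ witnessing equation~\eqref{equation:lift_along_mono} for the choice $\overline{\tau} := \overline{\id_Y}$, which is exactly the input required by Construction~\ref{construction:lift_along_mono}.

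Next I would apply Construction~\ref{construction:lift_along_mono} to the monomorphism $\overline{\alpha}$ and the test morphism $\overline{\id_Y}$. The construction delivers a morphism $\overline{\beta}: Y \to X$ satisfying $\overline{\beta} \cdot \overline{\alpha} = \overline{\id_Y}$. To conclude that $\overline{\beta}$ is also a left inverse of $\overline{\alpha}$, I would compute
\[
  \overline{\alpha} \cdot \overline{\beta} \cdot \overline{\alpha}
  = \overline{\alpha} \cdot \overline{\id_Y}
  = \overline{\alpha}
  = \overline{\id_X} \cdot \overline{\alpha}
\]
and then cancel $\overline{\alpha}$ on the right, which is legitimate because $\overline{\alpha}$ is a monomorphism. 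This gives $\overline{\alpha} \cdot \overline{\beta} = \overline{\id_X}$, so $\overline{\alpha}$ is an isomorphism.

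There is no genuine obstacle; the only nontrivial bookkeeping lies in the first step, namely translating the categorical equality $p = 0$ into the explicit zeroness witness that Construction~\ref{construction:lift_along_mono} expects. Since $p$ itself being zero means, by construction of the quotient $\CIC(\PC) = \ACIC(\PC)/\mathbf{I}(\PC)$, that $p$ lies in $\mathbf{I}(\PC)$ and therefore carries a witness of the form stipulated in Theorem and Definition~\ref{theorem:ideal_in_aux}, this translation is automatic.
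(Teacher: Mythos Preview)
Your proof is correct and follows essentially the same approach as the paper's. The paper phrases the argument abstractly (``$\cokernel(\alpha)$ is zero, so $\id_B$ is a valid test morphism for the lift along $\alpha$; hence $\alpha$ is a split epi, and a split epi that is mono is an iso''), while you unpack the witness $(\zeta_1,\zeta_2)$ explicitly and spell out the standard split-epi-plus-mono argument via the cancellation $\overline{\alpha}\cdot\overline{\beta}\cdot\overline{\alpha}=\overline{\alpha}$; the content is the same.
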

\begin{proof}
  By Construction \ref{construction:lift_along_mono} every mono in $\CIC( \PC )$ is a normal mono.
\end{proof}

\begin{remark}
 If $\PC$ has decidable syzygy inclusion,
 then we can decide whether a given morphism
 is a monomorphism by the following Lemma \ref{lemma:decide_mono}.
 In particular, we can check the assumption on the input in Construction \ref{construction:lift_along_mono}.
\end{remark}

\begin{lemma}\label{lemma:decide_mono}
  A morphism
 \[
  \overline{\alpha}: (A \stackrel{\gamma_A}{\longrightarrow}\Omega_A \stackrel{\rho_A}{\longleftarrow} R_A) \longrightarrow (B \stackrel{\gamma_B}{\longrightarrow}\Omega_B \stackrel{\rho_B}{\longleftarrow} R_B)
 \]
 in $\CIC( \PC )$ is a monomorphism
 if and only if
 \[
  \SyzC(A \stackrel{\alpha \cdot \gamma_B}{\longrightarrow}\Omega_B \stackrel{\rho_B}{\longleftarrow} R_B)
  \subseteq
  \SyzC(A \stackrel{\gamma_A}{\longrightarrow}\Omega_A \stackrel{\rho_A}{\longleftarrow} R_A).
 \]
\end{lemma}
\begin{proof}
 If $\overline{\alpha}$ is a monomorphism,
 and $\sigma \in \SyzC(A \stackrel{\alpha \cdot \gamma_B}{\longrightarrow}\Omega_B \stackrel{\rho_B}{\longleftarrow} R_B)$,
 then the composite
 \[
  \emb( S ) \stackrel{\overline{\sigma}}{\longrightarrow}
  (A \stackrel{\gamma_A}{\longrightarrow}\Omega_A \stackrel{\rho_A}{\longleftarrow} R_A) \stackrel{\overline{\alpha}}{\longrightarrow} (B \stackrel{\gamma_B}{\longrightarrow}\Omega_B \stackrel{\rho_B}{\longleftarrow} R_B)
 \]
 is zero, and thus $\overline{\sigma}$ is zero,
 which implies $\sigma \in \SyzC(A \stackrel{\gamma_A}{\longrightarrow}\Omega_A \stackrel{\rho_A}{\longleftarrow} R_A)$.
 
 Conversely,
 we can test being a monomorphism on compositions of the form
 \[
  \emb( S ) \stackrel{\overline{\sigma}}{\longrightarrow}
  (A \stackrel{\gamma_A}{\longrightarrow}\Omega_A \stackrel{\rho_A}{\longleftarrow} R_A) \stackrel{\overline{\alpha}}{\longrightarrow} (B \stackrel{\gamma_B}{\longrightarrow}\Omega_B \stackrel{\rho_B}{\longleftarrow} R_B)
 \]
 that yield zero due to Lemma \ref{lemma:convenient_covers}.
 But then, $\sigma \in \SyzC(A \stackrel{\alpha \cdot \gamma_B}{\longrightarrow}\Omega_B \stackrel{\rho_B}{\longleftarrow} R_B)$,
 which by assumption implies $\sigma \in \SyzC(A \stackrel{\gamma_A}{\longrightarrow}\Omega_A \stackrel{\rho_A}{\longleftarrow} R_A)$,
 which is equivalent to $\overline{\sigma}$ being zero.
\end{proof}

\subsection{Universal epi-mono factorizations}

As another decisive feature, $\CIC(\PC)$ admits universal epi-mono factorizations, i.e., essentially unique epi-mono factorizations,
and thus in particular images.

\begin{remark}
  Since we proved in Construction \ref{construction:lift_along_mono}
  that every mono in $\CIC( \PC )$ is a normal mono,
  the theory of factorizations
  as it is presented in \cite[Section 2]{FK72}
  implies that it suffices to prove that every morphism in $\CIC( \PC )$
  admits a factorization into a mono and an epi
  in order to conclude this factorization is already universal.
  Nevertheless, in Construction \ref{construction:epi_mono_factorization},
  we will make the universality of the epi-mono factorization explicit,
  since from the perspective of a computer implementation, it is helpful to have
  concrete formulas at hand.
\end{remark}

\begin{construction}[Universal epi-mono factorization]\label{construction:epi_mono_factorization}
 Given a morphism
 \[\overline{\alpha}: (A \stackrel{\gamma_A}{\longrightarrow}\Omega_A \stackrel{\rho_A}{\longleftarrow} R_A) \longrightarrow (B \stackrel{\gamma_B}{\longrightarrow}\Omega_B \stackrel{\rho_B}{\longleftarrow} R_B)\]
 in $\CIC(\PC)$,
 the following diagram shows us how to construct its universal epi-mono factorization
 along with its universal property:
 \begin{center}
    \begin{tikzpicture}[label/.style={postaction={
          decorate,
          decoration={markings, mark=at position .5 with \node #1;}},
          mylabel/.style={thick, draw=none, align=center, minimum width=0.5cm, minimum height=0.5cm,fill=white}}]
          \coordinate (r) at (6,0);
          \coordinate (u) at (0,2);
          \node (A) {$(A \stackrel{\gamma_A}{\longrightarrow}\Omega_A \stackrel{\rho_A}{\longleftarrow} R_A)$};
          \node (B) at ($(A)+2*(r)$) {$(B \stackrel{\gamma_B}{\longrightarrow}\Omega_B \stackrel{\rho_B}{\longleftarrow} R_B)$};
          \node (C) at ($(A) + (r) - (u)$) {$(A \stackrel{\alpha \cdot \gamma_B}{\longrightarrow}\Omega_B \stackrel{\rho_B}{\longleftarrow} R_B)$};
          \node (T) at ($(A) + (r) - 2*(u)$) {$(T \stackrel{\gamma_T}{\longrightarrow}\Omega_T \stackrel{\rho_T}{\longleftarrow} R_T)$};
          \draw[->,thick] (A) --node[above]{$\overline{\alpha}$} (B);
          \draw[->,thick] (A) --node[above,yshift=0em,xshift=0em]{$\overline{\id_A}$} (C);
          \draw[->,thick] (C) --node[above]{$\overline{\alpha}$} (B);
          \draw[bend right,->,thick,label={[{below,yshift=-0.2em}]{$\overline{\tau_1}$}},out=360-25,in=360-155] (A) to (T);
          \draw[bend right,->,thick,label={[{below,yshift=-0.2em}]{$\overline{\tau_2}$}},out=-25,in=210] (T) to (B);
          \draw[->,thick, dashed] (C) --node[right]{$\overline{\tau_1}$} (T);
    \end{tikzpicture}
 \end{center}
 How to read this diagram: the universal epi-mono factorization of $\overline{\alpha}$ is given by the
 upper triangle.
 Furthermore, if $\overline{\tau_1}$ and $\overline{\tau_2}$ form another epi-mono factorization of $\overline{\alpha}$,
 then the dashed vertical arrow is the isomorphism induced by its universal property.
\end{construction}
\begin{proof}[Correctness of the construction]
 The map 
 \[
  \SyzC(A \stackrel{\gamma_A}{\longrightarrow}\Omega_A \stackrel{\rho_A}{\longleftarrow} R_A)
  \rightarrow
  \SyzC(A \stackrel{\alpha \cdot \gamma_B}{\longrightarrow}\Omega_B \stackrel{\rho_B}{\longleftarrow} R_B): (S \rightarrow A) \mapsto (S \rightarrow A)
 \]
 is well-defined since
 \[
  \SyzC(A \stackrel{\gamma_A}{\longrightarrow}\Omega_A \stackrel{\rho_A}{\longleftarrow} R_A)
  \rightarrow
  \SyzC(B \stackrel{\gamma_B}{\longrightarrow}\Omega_B \stackrel{\rho_B}{\longleftarrow} R_B): 
  (S \stackrel{\sigma}{\rightarrow} A) \mapsto (S \stackrel{\sigma \cdot \alpha}\longrightarrow B)
 \]
 is well-defined.
 Furthermore, the map
 \[
  \SyzC(A \stackrel{\alpha \cdot \gamma_B}{\longrightarrow}\Omega_B \stackrel{\rho_B}{\longleftarrow} R_B)
  \rightarrow
  \SyzC(B \stackrel{\gamma_B}{\longrightarrow}\Omega_B \stackrel{\rho_B}{\longleftarrow} R_B):
  (S \stackrel{\sigma}{\rightarrow} A) \mapsto (S \stackrel{\sigma \cdot \alpha}\longrightarrow B)
 \]
 is always well-defined.
 Thus, we verified that the candidate for the universal epi-mono factorization
 consists of well-defined morphisms. Lemma \ref{lemma:epis} shows that $\overline{\id_A}$ is an epimorphism,
 and Lemma \ref{lemma:decide_mono} proves that we really have an epi-mono factorization.
 
 To check the well-definedness property of the induced morphism,
 we start with a syzygy
 \begin{center}
          \begin{tikzpicture}[label/.style={postaction={
            decorate,
            decoration={markings, mark=at position .5 with \node #1;}}}, baseline = (D),
            mylabel/.style={thick, draw=none, align=center, minimum width=0.5cm, minimum height=0.5cm,fill=white}]
              \coordinate (r) at (2.5,0);
              \coordinate (u) at (0,1.2);
              \node (A) {$A$};
              \node (B) at ($(A)+1*(r)$) {$\Omega_B$};
              \node (C) at ($(B) +(r)$) {$R_B$};
              \node (D) at ($(A) + (u)$) {$S$};
              \draw[->,thick] (A) -- node[below]{$\alpha \cdot \gamma_B$} (B);
              \draw[->,thick] (C) -- node[below]{$\rho_B$} (B);
              \draw[->,thick] (D) -- node[left]{$\sigma$} (A);
              \draw[->,thick,dashed] (D) -- node[above]{$\lambda$} (C);
          \end{tikzpicture}
 \end{center}
 and see that the syzygy witness $\lambda$ can be interpreted as a witness for the composition
 \[
  \emb(S) \stackrel{\overline{\sigma}}{\longrightarrow} 
  (A \stackrel{\gamma_A}{\longrightarrow}\Omega_A \stackrel{\rho_A}{\longleftarrow} R_A) \stackrel{\overline{\alpha}}{\longrightarrow} 
  (B \stackrel{\gamma_B}{\longrightarrow}\Omega_B \stackrel{\rho_B}{\longleftarrow} R_B)
 \]
 being zero. From
 \begin{align*}
  0 = \overline{\sigma} \cdot \overline{\alpha} = \overline{\sigma} \cdot \overline{\tau_1} \cdot \overline{\tau_2}
 \end{align*}
 and $\overline{\tau_2}$ being a monomorphism we conclude $\overline{\sigma} \cdot \overline{\tau_1} = 0$,
 which gives us the desired syzygy:
 \begin{center}
          \begin{tikzpicture}[label/.style={postaction={
            decorate,
            decoration={markings, mark=at position .5 with \node #1;}}}, baseline = (D),
            mylabel/.style={thick, draw=none, align=center, minimum width=0.5cm, minimum height=0.5cm,fill=white}]
              \coordinate (r) at (2.5,0);
              \coordinate (u) at (0,1.2);
              \node (A) {$T$};
              \node (B) at ($(A)+1*(r)$) {$\Omega_T$};
              \node (C) at ($(B) +(r)$) {$R_T$.};
              \node (D) at ($(A) + (u)$) {$S$};
              \draw[->,thick] (A) -- node[below]{$\gamma_T$} (B);
              \draw[->,thick] (C) -- node[below]{$\rho_T$} (B);
              \draw[->,thick] (D) -- node[left]{$\sigma \cdot \tau_1$} (A);
              \draw[->,thick, dashed] (D) -- (C);
          \end{tikzpicture}
 \end{center}
 Thus, the induced morphism is well-defined. It is easy to check
 that it renders the whole epi-mono factorization diagram commutative:
 the lower left triangle commutes already in $\ACIC( \PC )$,
 and from this, the commutativity of the lower right triangle is implied.
 
 Last, since the induced morphism is an epimorphism and a monomorphism,
 Corollary \ref{corollary:epi-mono_iso} proves that it is an isomorphism.
 Thus, we have successfully constructed a universal epi-mono factorization.
\end{proof}

\subsection{Colifts along epimorphisms}
% \todo{example}
The category $\CIC(\PC)$ does not necessarily have kernels (see Theorem \ref{theorem:abelian_case}).
Thus, it does not make sense to ask for every epimorphism to be the cokernel
of its kernel. However, the following construction serves as an
appropriate substitute.

\begin{construction}[Colifts along epimorphisms]\label{construction:colift_along_epi}
 Let
 \[
 \overline{\alpha}: (A \stackrel{\gamma_A}{\longrightarrow}\Omega_A \stackrel{\rho_A}{\longleftarrow} R_A) \longrightarrow (B \stackrel{\gamma_B}{\longrightarrow}\Omega_B \stackrel{\rho_B}{\longleftarrow} R_B)
 \]
 be an epi
 in $\CIC(\PC)$.
 Then, its cokernel projection is the zero morphism.
 Using the explicit construction of the cokernel projection
 in Construction \ref{construction:cokernel}, this means that
 there exists a morphism
 \[
  \pmatrow{\zeta_1}{\zeta_2}: B \longrightarrow R_B \oplus A
 \]
 such that
 \begin{equation}\label{equation:colift_along_epi}
  \gamma_B = \zeta_1 \cdot \rho_B + \zeta_2 \cdot \alpha \cdot \gamma_B.
 \end{equation}
 The following diagram shows us how to construct
 a colift along the epimorphism $\overline{\alpha}$
 \begin{equation}\label{equation:colift_along_epi_diagram}
          \begin{tikzpicture}[label/.style={postaction={
            decorate,
            decoration={markings, mark=at position .5 with \node #1;}}}, baseline = (base),
            mylabel/.style={thick, draw=none, align=center, minimum width=0.5cm, minimum height=0.5cm,fill=white}]
              \coordinate (r) at (5,0);
              \coordinate (d) at (0,-2);
              \node (A) {$(A \stackrel{\gamma_A}{\longrightarrow}\Omega_A \stackrel{\rho_A}{\longleftarrow} R_A)$};
              \node (B) at ($(A)+(r)$) {$(B \stackrel{\gamma_B}{\longrightarrow}\Omega_B \stackrel{\rho_B}{\longleftarrow} R_B)$};
              \node (T) at ($(B) +(d)$) {$(T \stackrel{\gamma_T}{\longrightarrow}\Omega_T \stackrel{\rho_T}{\longleftarrow} R_T)$};
              \node (base) at ($(A) + 0.5*(d)$) {};
              \draw[->,thick] (A) -- node[above]{$\overline{\alpha}$} (B);
              \draw[->,thick] (A) -- node[below]{$\overline{\tau}$} (T);
              \draw[->,thick,dashed] (B) -- node[right]{$\overline{\zeta_2 \cdot \tau}$} (T);
          \end{tikzpicture}
 \end{equation}
 for a given test morphism $\overline{\tau}$, where
 test morphism means that $\overline{\tau}$ satisfies the following property:
 whenever we have a morphism $\overline{\kappa}$ in $\CIC(\PC)$ such that
 $\overline{\kappa} \cdot \overline{\alpha} = 0$, we also have
 $\overline{\kappa} \cdot \overline{\tau} = 0$.
\end{construction}
\begin{remark}
 If $\PC$ has decidable syzygy inclusion,
 then we can decide whether a given $\overline{\tau}$ yields a test morphism:
 indeed, using Lemma \ref{lemma:convenient_covers},
 this is the case if and only if
 \[
  \SyzC( A \stackrel{\alpha \cdot \gamma_B}{\longrightarrow} \Omega_B \stackrel{\rho_B}{\longleftarrow} R_B )
  \subseteq
  \SyzC( A \stackrel{\tau \cdot \gamma_T}{\longrightarrow} \Omega_T \stackrel{\rho_T}{\longleftarrow} R_T ).
 \]

\end{remark}

\begin{proof}[Correctness of the construction]
 First, we show that the colift $\overline{\zeta_2 \cdot \tau}$ satisfies the well-definedness property.
 Given a syzygy
 \begin{center}
          \begin{tikzpicture}[label/.style={postaction={
            decorate,
            decoration={markings, mark=at position .5 with \node #1;}}}, baseline = (D),
            mylabel/.style={thick, draw=none, align=center, minimum width=0.5cm, minimum height=0.5cm,fill=white}]
              \coordinate (r) at (2.5,0);
              \coordinate (u) at (0,1.2);
              \node (A) {$B$};
              \node (B) at ($(A)+1*(r)$) {$\Omega_B$};
              \node (C) at ($(B) +(r)$) {$R_B$};
              \node (D) at ($(A) + (u)$) {$S$};
              \draw[->,thick] (A) -- node[below]{$\gamma_B$} (B);
              \draw[->,thick] (C) -- node[below]{$\rho_B$} (B);
              \draw[->,thick] (D) -- node[left]{$\sigma$} (A);
              \draw[->,thick,dashed] (D) -- node[above,yshift=0.3em]{$\lambda$} (C);
          \end{tikzpicture}
 \end{center}
 we conclude by multiplying \eqref{equation:colift_along_epi} with $\sigma$ from the left that
 \begin{center}
          \begin{tikzpicture}[label/.style={postaction={
            decorate,
            decoration={markings, mark=at position .5 with \node #1;}}}, baseline = (D),
            mylabel/.style={thick, draw=none, align=center, minimum width=0.5cm, minimum height=0.5cm,fill=white}]
              \coordinate (r) at (2.5,0);
              \coordinate (u) at (0,1.2);
              \node (A) {$B$};
              \node (B) at ($(A)+1*(r)$) {$\Omega_B$};
              \node (C) at ($(B) +(r)$) {$R_B$};
              \node (D) at ($(A) + (u)$) {$S$};
              \draw[->,thick] (A) -- node[below]{$\gamma_B$} (B);
              \draw[->,thick] (C) -- node[below]{$\rho_B$} (B);
              \draw[->,thick] (D) -- node[left]{$\sigma \cdot \zeta_2 \cdot \alpha$} (A);
              \draw[->,thick,dashed] (D) -- node[above,yshift=0.3em]{$\lambda - \sigma \cdot \zeta_1$} (C);
          \end{tikzpicture}
 \end{center}
 is also a syzygy, whose syzygy witness can be interpreted as a witness for the composition
 \[
 \emb(S) \stackrel{\overline{\sigma \cdot \zeta_2}}{\longrightarrow}
 (A \stackrel{\gamma_A}{\longrightarrow}\Omega_A \stackrel{\rho_A}{\longleftarrow} R_A) \stackrel{\overline{\alpha}}{\longrightarrow} (B \stackrel{\gamma_B}{\longrightarrow}\Omega_B \stackrel{\rho_B}{\longleftarrow} R_B)\]
 in $\CIC(\PC)$ being zero.
 Since $\overline{\tau}$ is a test morphism, this implies that the composition
 \[
 \emb(S) \stackrel{\overline{\sigma \cdot \zeta_2}}{\longrightarrow}
 (A \stackrel{\gamma_A}{\longrightarrow}\Omega_A \stackrel{\rho_A}{\longleftarrow} R_A) \stackrel{\overline{\tau}}{\longrightarrow} (T \stackrel{\gamma_T}{\longrightarrow}\Omega_T \stackrel{\rho_T}{\longleftarrow} R_T)\]
 is zero as well, which gives us the desired well-definedness property.
 
 To show that $\overline{\zeta_2 \cdot \tau}$ is really a colift,
 we multiply \eqref{equation:colift_along_epi} with $\alpha$ from the left
 to see that the composition
 \[
 \emb(A) \stackrel{\overline{\alpha \cdot \zeta_2 - \id_A}}{\longrightarrow}
 (A \stackrel{\gamma_A}{\longrightarrow}\Omega_A \stackrel{\rho_A}{\longleftarrow} R_A) \stackrel{\overline{\alpha}}{\longrightarrow} (B \stackrel{\gamma_B}{\longrightarrow}\Omega_B \stackrel{\rho_B}{\longleftarrow} R_B)\]
 is zero. Since $\overline{\tau}$ is a test morphism,
 the composition
 \[
 \emb(A) \stackrel{\overline{\alpha \cdot \zeta_2 - \id_A}}{\longrightarrow}
 (A \stackrel{\gamma_A}{\longrightarrow}\Omega_A \stackrel{\rho_A}{\longleftarrow} R_A) \stackrel{\overline{\tau}}{\longrightarrow} (T \stackrel{\gamma_T}{\longrightarrow}\Omega_T \stackrel{\rho_T}{\longleftarrow} R_T)\]
 is also zero.
 If $\zeta_3$ denotes a witness for this composition being zero, then the equation
 \[
  (\alpha \cdot \zeta_2 \cdot \tau - \tau) \cdot \gamma_T = \zeta_3 \cdot \rho_T
 \]
 holds, which means that the diagram \eqref{equation:colift_along_epi_diagram} commutes.
\end{proof}

\section{The category $\CIC(\PC)$ as a subcategory of the category of modules}\label{section:modules}
Let $\PC$ be an additive category\footnote{To avoid set-theoretic issues, we assume $\PC$ to be a small category, i.e., its objects form a set.}.
We denote the category of contravariant additive functors from $\PC$ to the
category of abelian groups $\Ab$ by
$\Modr \PC$ and call it the category of \textbf{right $\PC$-modules}.

\begin{example}\label{example:functors_are_modules}
 An additive functor
 \[
  F: \Rows_R^{\op} \rightarrow \Ab
 \]
 is uniquely determined up to natural isomorphism by its restriction to the full subcategory of $\Rows_R^{\op}$ spanned by $R^{1 \times 1}$,
 since it respects direct sums.
 The image $F( R^{1 \times 1} )$ is an abelian group, 
 and the action of $F$ on morphisms encodes a left action of $R$ on $F( R^{1 \times 1} )$,
 giving it the structure of a left $R$-module.
 In this way, we get an equivalence of categories\footnote{
   At first glance, it might look confusing that \emph{right} $\Rows_R$-modules correspond to \emph{left} $R$-modules.
   If we regard $R$ as a category with a single object $\ast$ whose endomorphisms are given by the elements of $R$,
   then it is common to define the \emph{post}composition as ring multiplication.
   With this convention, functors $R \rightarrow \Ab$ which respect addition correspond to \emph{left} modules.
   But since we defined $\Rows_R$ as a subcategory of \emph{left} $R$-modules,
   we get a \emph{contravariant} functor $R \rightarrow \Rows_R: \ast \mapsto R^{1 \times 1}$.
 }
 \[
  R\Modl \simeq \Modr \Rows_R.
 \]
\end{example}

\begin{notation}
 We let
\[
 \PC \longrightarrow \Modr\PC: P \mapsto (-,P)
\]
denote the Yoneda embedding, where $(-,P)$ is shorthand notation for $\Hom_{\PC}(-,P)$.
\end{notation}

\begin{remark}[A short interlude on working with $\Modr\PC$]\label{remark:working_with_functor}
Since $\Modr\PC$ is a functor category, all limits and colimits
are computed pointwise, i.e., after evaluation at every object $A \in \PC$, see, e.g., \cite[Chapter V.3]{MLCWM}.
Since $\Modr\PC$ is abelian, the pointwise constructions
apply in particular to kernels, cokernels, and images.
Deciding whether a morphism in $\Modr\PC$ is mono/epi
can also be decided pointwise, since it
is equivalent to the kernel/cokernel being zero,
which can be decided pointwise.
For every $A \in \PC$ and $F \in \Modr\PC$, the Yoneda lemma states that 
a morphism
\[
 (-,A) \rightarrow F
\]
is uniquely determined by 
choosing an image $x \in F(A)$ of the element $\id_A \in (A,A)$, and every such choice is valid.
In particular, the Yoneda lemma
implies that the object $(-,A) \in \Modr\PC$
is projective in $\Modr\PC$.
\end{remark}

The goal of this section is to construct a functor
\[
 \overline{M}: \CIC( \PC ) \longrightarrow \Modr\PC.
\]
We proceed in several steps.

\begin{construction}\label{construction:functor_aux_mod}
 As a first step, 
 we are going to construct a functor
 \[
  M: \ACIC(\PC) \longrightarrow \Modr\PC
 \]
 on objects.
 From an object $(A \stackrel{\gamma_A}{\longrightarrow}\Omega_A \stackrel{\rho_A}{\longleftarrow} R_A)$ in $\ACIC( \PC )$,
 we obtain a short exact sequence
 \begin{center}
            \begin{tikzpicture}[label/.style={postaction={
              decorate,
              decoration={markings, mark=at position .5 with \node #1;}}},
              mylabel/.style={thick, draw=none, align=center, minimum width=0.5cm, minimum height=0.5cm,fill=white}]
                \coordinate (r) at (2.5,0);
                \coordinate (u) at (0,1.2);
                \node (A) {$0$};
                \node (B) at ($(A)+(r)$) {$\ker( \epsilon_A )$};
                \node (C) at ($(B)+(r)$) {$(-,A)$};
                \node (D) at ($(C)+(r)$) {$\frac{\im( -, \gamma_A )}{ \im( -, \rho_A )}$};
                \node (E) at ($(D)+(r)$) {$0$};
                \draw[->,thick] (A) -- (B);
                \draw[->,thick] (B) -- (C);
                \draw[->,thick] (C) -- node[above]{$\epsilon_A$} (D);
                \draw[->,thick] (D) -- (E);
            \end{tikzpicture}
  \end{center}
  in $\Modr \PC$, where $\epsilon_A$ is the composition of $(-,A) \rightarrow \im(-,\gamma_A)$,
  the coastriction to image of the morphism $(-,\gamma_A)$,
  with the projection $\im(-,\gamma_A) \rightarrow \frac{\im( -, \gamma_A )}{ \im( -, \rho_A )}$.
  We set
  \[
   M\big( (A \stackrel{\gamma_A}{\longrightarrow}\Omega_A \stackrel{\rho_A}{\longleftarrow} R_A) \big)
   :=
   \frac{\im( -, \gamma_A )}{ \im( -, \rho_A )}.
  \]
  \end{construction}
  
For the action of $M$ on morphisms, we need the following lemma.  
  
\begin{lemma}\label{lemma:syz_and_ker}
 A morphism $\alpha: A \rightarrow B$ in $\PC$ induces a well-defined morphism
 between two objects $(A \stackrel{\gamma_A}{\longrightarrow}\Omega_A \stackrel{\rho_A}{\longleftarrow} R_A)$
 and 
 $(B \stackrel{\gamma_B}{\longrightarrow}\Omega_B \stackrel{\rho_B}{\longleftarrow} R_B)$
 in $\ACIC( \PC )$
 if and only if $(-,\alpha)$ restricts as follows:
 \begin{center}
            \begin{tikzpicture}[label/.style={postaction={
              decorate,
              decoration={markings, mark=at position .5 with \node #1;}}},
              mylabel/.style={thick, draw=none, align=center, minimum width=0.5cm, minimum height=0.5cm,fill=white}]
                \coordinate (r) at (3,0);
                \coordinate (u) at (0,2);
                \node (A) {$0$};
                \node (B) at ($(A)+(r)$) {$\ker( \epsilon_A )$};
                \node (C) at ($(B)+(r)$) {$(-,A)$};

                \node (A2) at ($(A)-(u)$) {$0$};
                \node (B2) at ($(A2)+(r)$) {$\ker( \epsilon_B )$};
                \node (C2) at ($(B2)+(r)$) {$(-,B)$.};

                \draw[->,thick] (A) -- (B);
                \draw[->,thick] (B) -- (C);
                
                \draw[->,thick] (A2) -- (B2);
                \draw[->,thick] (B2) -- (C2);
                
                \draw[->,thick,dashed] (B) -- (B2);
                \draw[->,thick] (C) --node[left]{$(-,\alpha)$} (C2);
            \end{tikzpicture}
  \end{center}
\end{lemma}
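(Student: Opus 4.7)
The plan is to unfold both conditions pointwise and observe that they are literally the same. The heart of the matter is the identification of $\ker(\epsilon_A)(S)$ with the set of syzygies whose source is $S$.

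First I would evaluate the defining short exact sequence of Construction \ref{construction:functor_aux_mod} at an arbitrary object $S \in \PC$. Since in $\Modr \PC$ kernels, images and cokernels are computed pointwise (Remark \ref{remark:working_with_functor}), the evaluation yields
\[
 0 \longrightarrow \ker(\epsilon_A)(S) \longrightarrow \Hom_{\PC}(S,A) \xrightarrow{\epsilon_A(S)} \tfrac{\im(-,\gamma_A)}{\im(-,\rho_A)}(S) \longrightarrow 0,
\]
where $\epsilon_A(S)$ sends $\sigma: S \to A$ to the class of $\sigma \cdot \gamma_A$ modulo the subgroup $\{\omega \cdot \rho_A \mid \omega \in \Hom_{\PC}(S, R_A)\}$. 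Consequently, $\sigma \in \ker(\epsilon_A)(S)$ if and only if there exists $\omega: S \to R_A$ with $\sigma \cdot \gamma_A = \omega \cdot \rho_A$, i.e., if and only if $\sigma$ is a syzygy in $\SyzC(A \stackrel{\gamma_A}{\longrightarrow} \Omega_A \stackrel{\rho_A}{\longleftarrow} R_A)$ with witness $\omega$. The same identification holds for $\ker(\epsilon_B)(S)$.

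Second, the Yoneda morphism $(-, \alpha): (-, A) \to (-, B)$ acts on each component by post-composition, $\sigma \mapsto \sigma \cdot \alpha$. Using again that subobjects in $\Modr \PC$ are detected pointwise, $(-, \alpha)$ restricts to a morphism $\ker(\epsilon_A) \to \ker(\epsilon_B)$ precisely when, for every $S \in \PC$, the map $\sigma \mapsto \sigma \cdot \alpha$ carries $\ker(\epsilon_A)(S)$ into $\ker(\epsilon_B)(S)$. Under the identification of the first step, this says exactly that every syzygy $\sigma \in \SyzC(A \stackrel{\gamma_A}{\longrightarrow} \Omega_A \stackrel{\rho_A}{\longleftarrow} R_A)$ is sent to a syzygy $\sigma \cdot \alpha \in \SyzC(B \stackrel{\gamma_B}{\longrightarrow} \Omega_B \stackrel{\rho_B}{\longleftarrow} R_B)$, which is the well-definedness property of $\alpha$ from Definition \ref{definition:aux_cat_2}. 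The two directions of the ``if and only if'' are therefore just the two readings of one and the same equivalence, so no step should pose real difficulty; the only care needed is the appeal to pointwise computation in $\Modr\PC$ to reduce existence of the dashed functor-level factorisation to a statement about elements.
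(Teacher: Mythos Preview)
Your proposal is correct and follows essentially the same approach as the paper's own proof: both compute $\ker(\epsilon_A)(S)$ pointwise and identify it with the set of syzygies $S \to A$, then observe that the restriction of $(-,\alpha)$ to the kernels exists iff post-composition with $\alpha$ preserves syzygies, which is precisely the well-definedness property. The paper's argument is slightly terser, but there is no substantive difference.
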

\begin{proof}
 We compute the evaluation of $\kernel( \epsilon_A )$ at $P \in \PC$:
 \begin{align*}
  \kernel( \epsilon_A )( P ) &= \big\{ (P \stackrel{\sigma}{\rightarrow} A) \mid \epsilon_A( \sigma ) = 0 \big\} \\
  &= \big\{ (P \stackrel{\sigma}{\rightarrow} A) \mid (P, \gamma_A)( \sigma ) \in \im( (P, \rho_A ) ) \big\} \\
  &= \big\{ (P \stackrel{\sigma}{\rightarrow} A) \mid \exists (P \stackrel{\omega}{\rightarrow} R_A): \sigma \cdot \gamma_A = \omega \cdot \rho_A \big\} \\
  &= \big\{ (P \stackrel{\sigma}{\rightarrow} A) \in \SyzC(A \stackrel{\gamma_A}{\longrightarrow}\Omega_A \stackrel{\rho_A}{\longleftarrow} R_A)  \big\}.
 \end{align*}
 Thus, we get a commutative diagram for every evaluation at $P \in \PC$
 \begin{center}
            \begin{tikzpicture}[label/.style={postaction={
              decorate,
              decoration={markings, mark=at position .5 with \node #1;}}},
              mylabel/.style={thick, draw=none, align=center, minimum width=0.5cm, minimum height=0.5cm,fill=white}]
                \coordinate (r) at (2.5,0);
                \coordinate (u) at (0,2);
                \node (A) {$\big\{
                (P \stackrel{\sigma}{\rightarrow} A) \in \SyzC(A \stackrel{\gamma_A}{\longrightarrow}\Omega_A \stackrel{\rho_A}{\longleftarrow} R_A) 
                \big\}$};
                \node (B) at ($(A)+2*(r)$) {$\ker( \epsilon_A )(P)$};
                \node (C) at ($(B)+1.3*(r)$) {$(P,A)$};
                
                \node (A2) at ($(A) - (u)$) {$\big\{
                (P \stackrel{\sigma}{\rightarrow} B) \in \SyzC(B \stackrel{\gamma_B}{\longrightarrow}\Omega_B \stackrel{\rho_B}{\longleftarrow} R_B) 
                \big\}$};
                \node (B2) at ($(A2)+2*(r)$) {$\ker( \epsilon_B )(P)$};
                \node (C2) at ($(B2)+1.3*(r)$) {$(P,B)$};
                \draw[draw=none] (A) --node[]{$=$} (B);
                \draw[right hook->,thick] (B) -- (C);
                \draw[draw=none] (A2) --node[]{$=$} (B2);
                \draw[right hook->,thick] (B2) -- (C2);
                \draw[->,thick] (C) --node[right]{$(P,\alpha)$} (C2);
                \draw[->,thick,dashed] (A) -- (A2);
            \end{tikzpicture}
  \end{center}
 if and only if the well-definedness property holds.
\end{proof}

\begin{construction}\label{construction:functor_aux_on_mor}
  By Lemma \ref{lemma:syz_and_ker} we can define the action of $M$ on a morphism
  \[
   \alpha:
   (A \stackrel{\gamma_A}{\longrightarrow}\Omega_A \stackrel{\rho_A}{\longleftarrow} R_A)
   \longrightarrow
   (B \stackrel{\gamma_B}{\longrightarrow}\Omega_B \stackrel{\rho_B}{\longleftarrow} R_B)
  \]
  in $\ACIC( \PC )$
  by the unique morphism
  completing the following commutative diagram:
  \begin{equation}\label{equation:functor_aux_on_mor}
            \begin{tikzpicture}[label/.style={postaction={
              decorate,
              decoration={markings, mark=at position .5 with \node #1;}}},baseline=(base),
              mylabel/.style={thick, draw=none, align=center, minimum width=0.5cm, minimum height=0.5cm,fill=white}]
                \coordinate (r) at (3,0);
                \coordinate (u) at (0,2);
                \node (A) {$0$};
                \node (B) at ($(A)+(r)$) {$\ker( \epsilon_A )$};
                \node (C) at ($(B)+(r)$) {$(-,A)$};
                \node (D) at ($(C)+(r)$) {$\frac{\im( -, \gamma_A )}{ \im( -, \rho_A )}$};
                \node (E) at ($(D)+(r)$) {$0$};
                
                \node (A2) at ($(A)-(u)$) {$0$};
                \node (B2) at ($(A2)+(r)$) {$\ker( \epsilon_B )$};
                \node (C2) at ($(B2)+(r)$) {$(-,B)$};
                \node (D2) at ($(C2)+(r)$) {$\frac{\im( -, \gamma_B )}{ \im( -, \rho_B )}$};
                \node (E2) at ($(D2)+(r)$) {$0$.};
                
                \node (base) at ($(A)-0.5*(u)$) {};
                
                \draw[->,thick] (A) -- (B);
                \draw[->,thick] (B) -- (C);
                \draw[->,thick] (C) -- node[above]{$\epsilon_A$} (D);
                \draw[->,thick] (D) -- (E);
                
                \draw[->,thick] (A2) -- (B2);
                \draw[->,thick] (B2) -- (C2);
                \draw[->,thick] (C2) -- node[above]{$\epsilon_B$} (D2);
                \draw[->,thick] (D2) -- (E2);
                
                \draw[->,thick] (B) -- (B2);
                \draw[->,thick] (C) --node[left]{$(-,\alpha)$} (C2);
                \draw[->,thick,dashed] (D) --node[right]{$M(\alpha)$} (D2);
            \end{tikzpicture}
  \end{equation}
  Functoriality of $M$ is implied by the functoriality of taking cokernels of commutative squares.
  The same holds for additivity.
\end{construction}

\begin{lemma}\label{lemma:aux_induces_fsubquot}
  Given a morphism
  $
    \alpha:
    (A \stackrel{\gamma_A}{\longrightarrow}\Omega_A \stackrel{\rho_A}{\longleftarrow} R_A)
    \longrightarrow
    (B \stackrel{\gamma_B}{\longrightarrow}\Omega_B \stackrel{\rho_B}{\longleftarrow} R_B)
  $
  in $\ACIC( \PC )$, then
  \[
   M( \alpha ) = 0 \hspace{1em}\Longleftrightarrow\hspace{1em} \alpha \in \mathbf{I}(\PC),
  \]
  where $M$ is the functor described in the Constructions \ref{construction:functor_aux_mod} and \ref{construction:functor_aux_on_mor}
  and $\mathbf{I}(\PC)$ is the ideal defined in Theorem and Definition \ref{theorem:ideal_in_aux}.
\end{lemma}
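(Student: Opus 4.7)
The plan is to prove both implications by a direct computation, exploiting the fact that the generator $\id_A$ in the representable $(-,A)$ gives, via the Yoneda lemma and $\epsilon_A$, a canonical element ``$[\gamma_A]$'' of $M(A \stackrel{\gamma_A}{\to}\Omega_A \stackrel{\rho_A}{\leftarrow} R_A)$ that controls the whole natural transformation $M(\alpha)$.

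For the implication $\alpha \in \mathbf{I}(\PC) \Rightarrow M(\alpha) = 0$, I would pick a witness $\zeta : A \to R_B$ with $\alpha \cdot \gamma_B = \zeta \cdot \rho_B$ and check that $M(\alpha)_P = 0$ for every $P \in \PC$. A general element of $\frac{\im(-,\gamma_A)}{\im(-,\rho_A)}(P)$ is of the form $\epsilon_A(\sigma) = [\sigma \cdot \gamma_A]$ for some $\sigma : P \to A$. By the commutativity of \eqref{equation:functor_aux_on_mor}, its image under $M(\alpha)_P$ equals $\epsilon_B(\sigma \cdot \alpha) = [\sigma \cdot \alpha \cdot \gamma_B]$. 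Substituting the witness equation yields $\sigma \cdot \alpha \cdot \gamma_B = (\sigma \cdot \zeta) \cdot \rho_B \in \im(P,\rho_B)$, so the class is zero. Hence $M(\alpha) = 0$.

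For the converse, I would simply evaluate at $P = A$ and chase $\id_A$. By Remark \ref{remark:working_with_functor}, a morphism out of a representable is determined by where it sends the identity. The element $\id_A \in (A,A)$ is sent by $\epsilon_A$ to $[\gamma_A]$, and by the commutativity of \eqref{equation:functor_aux_on_mor} the composite $\epsilon_A$ followed by $M(\alpha)_A$ agrees with $(A,\alpha)$ followed by $\epsilon_B$, giving $[\alpha \cdot \gamma_B]$. The assumption $M(\alpha) = 0$ forces $[\alpha \cdot \gamma_B] = 0$ in $\frac{\im(A,\gamma_B)}{\im(A,\rho_B)}$, i.e.\ $\alpha \cdot \gamma_B \in \im(A,\rho_B)$. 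Unwinding the definition of image in $\Ab$ gives a morphism $\zeta : A \to R_B$ with $\alpha \cdot \gamma_B = \zeta \cdot \rho_B$, which is precisely the witness required for $\alpha \in \mathbf{I}(\PC)$.

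No step looks genuinely difficult: both directions reduce to tracking the single generator $\id_A$ across the commutative diagram \eqref{equation:functor_aux_on_mor} and unwinding the definition of $\epsilon_A$. The only place where one has to be slightly careful is the forward direction, where one must verify that the single-element vanishing inherent in the definition of $\mathbf{I}(\PC)$ really forces $M(\alpha)_P$ to vanish uniformly in $P$; this is settled by precomposing the witness with an arbitrary test morphism $\sigma : P \to A$.
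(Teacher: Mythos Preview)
Your proof is correct and follows essentially the same approach as the paper. The paper packages both implications into a single iff chain by observing that $M(\alpha)=0$ is equivalent to $(-,\alpha)$ factoring through $\ker(\epsilon_B)\hookrightarrow(-,B)$, which by Yoneda is equivalent to $\alpha\in\ker(\epsilon_B)(A)$; your two separate directions unwind exactly this argument elementwise.
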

\begin{proof}
 From the diagram \eqref{equation:functor_aux_on_mor},
 we see that
 $M(\alpha) = 0$ if and only if there exists
 a commutative diagram
 \begin{center}
            \begin{tikzpicture}[label/.style={postaction={
              decorate,
              decoration={markings, mark=at position .5 with \node #1;}}},
              mylabel/.style={thick, draw=none, align=center, minimum width=0.5cm, minimum height=0.5cm,fill=white}]
                \coordinate (r) at (3,0);
                \coordinate (u) at (0,2);
                \node (A) {};
                \node (B) at ($(A)+(r)$) {};
                \node (C) at ($(B)+(r)$) {$(-,A)$};
                \node (D) at ($(C)+(r)$) {};
                \node (E) at ($(D)+(r)$) {};
                
                \node (A2) at ($(A)-(u)$) {};
                \node (B2) at ($(A2)+(r)$) {$\ker( \epsilon_B )$};
                \node (C2) at ($(B2)+(r)$) {$(-,B)$.};
                \node (D2) at ($(C2)+(r)$) {};
                \node (E2) at ($(D2)+(r)$) {};
                
                \draw[->,thick] (B2) -- (C2);
                \draw[->,thick] (C) --node[right]{$(-,\alpha)$} (C2);
                \draw[->,thick,dashed] (C) -- (B2);
            \end{tikzpicture}
  \end{center}
  By the Yoneda lemma, this is equivalent to
  $\alpha \in \kernel( \epsilon_B )(A)$, i.e.,
  $\alpha \in \SyzC(B \stackrel{\gamma_B}{\longrightarrow}\Omega_B \stackrel{\rho_B}{\longleftarrow} R_B)$,
  which exactly means $\alpha \in \mathbf{I}(\PC)$.
\end{proof}

\begin{theorem}\label{theorem:main_theorem}
 The functor $M$ from Construction \ref{construction:functor_aux_mod}
 induces a full and faithful functor
 \[
  \overline{M}: \CIC( \PC ) \rightarrow \Modr \PC
 \]
 that preserves cokernels and images.
\end{theorem}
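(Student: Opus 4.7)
The plan is to build $\overline{M}$ by factoring the functor $M$ of Construction \ref{construction:functor_aux_mod} through the additive quotient $\CIC(\PC) = \ACIC(\PC)/\mathbf{I}(\PC)$, then to verify faithfulness, fullness, and preservation of cokernels and images in turn. The factorisation together with faithfulness is handed to us essentially for free by Lemma \ref{lemma:aux_induces_fsubquot}: since $M(\alpha) = 0$ holds if and only if $\alpha \in \mathbf{I}(\PC)$, the universal property of the additive quotient yields a unique additive functor $\overline{M}$ with $\overline{M}(\overline{\alpha}) = M(\alpha)$, and this same equivalence says that $\overline{M}(\overline{\alpha}) = 0$ forces $\overline{\alpha} = 0$, i.e., $\overline{M}$ is faithful.

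For fullness, I start with a morphism $\varphi: \overline{M}(A,\gamma_A,\rho_A,R_A) \to \overline{M}(B,\gamma_B,\rho_B,R_B)$ in $\Modr \PC$ and exploit the projectivity of the representable $(-,A)$ recalled in Remark \ref{remark:working_with_functor}. Composing $\varphi$ with the canonical epi $\epsilon_A: (-,A) \twoheadrightarrow M(A,\gamma_A,\rho_A,R_A)$ and lifting through the epi $\epsilon_B$ yields a morphism $(-,A) \to (-,B)$, which by Yoneda comes from a unique $\alpha: A \to B$ in $\PC$. A short diagram chase shows that $(-,\alpha)$ then carries $\kernel(\epsilon_A)$ into $\kernel(\epsilon_B)$, so Lemma \ref{lemma:syz_and_ker} guarantees that $\alpha$ satisfies the well-definedness property, giving a morphism in $\ACIC(\PC)$ whose image under $\overline{M}$ is $\varphi$ by construction.

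Preservation of cokernels is a direct unwinding of Construction \ref{construction:cokernel}: applying $M$ to $(B, \gamma_B, \rho_B, R_B) \xrightarrow{\id_B} (B, \gamma_B, \pmatcol{\rho_B}{\alpha \cdot \gamma_B}, R_B \oplus A)$ produces the quotient map
\[
 \frac{\im(-,\gamma_B)}{\im(-,\rho_B)} \twoheadrightarrow \frac{\im(-,\gamma_B)}{\im(-,\rho_B) + \im(-,\alpha \cdot \gamma_B)},
\]
whose kernel at any $P \in \PC$ is exactly the image of the map $\sigma \mapsto \sigma \cdot \alpha \cdot \gamma_B \bmod \rho_B$, i.e., the pointwise image of $\overline{M}(\overline{\alpha})$. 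Since cokernels in the functor category are computed pointwise, this identifies $\overline{M}(\overline{\id_B})$ with the cokernel of $\overline{M}(\overline{\alpha})$. The image case is analogous: the middle object $(A, \alpha \cdot \gamma_B, \rho_B, R_B)$ from Construction \ref{construction:epi_mono_factorization} has $M$-value $\im(-,\alpha \cdot \gamma_B)/\im(-,\rho_B)$, and pointwise the image of $\sigma \cdot \gamma_A \bmod \rho_A \mapsto \sigma \cdot \alpha \cdot \gamma_B \bmod \rho_B$ is precisely this subquotient.

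The one step where I expect to have to take care is the fullness argument, specifically the verification that the lift $\alpha: A \to B$ produced by projectivity actually lies in $\ACIC(\PC)$ as a well-defined morphism between the given cospans; this is where Lemma \ref{lemma:syz_and_ker} does the real work, and where the entire design of the category $\CIC(\PC)$ pays off. Everything else is either a consequence of Lemma \ref{lemma:aux_induces_fsubquot}, an application of the Yoneda lemma, or a routine computation with the explicit formulas for $M$ on the constructions of the previous section.
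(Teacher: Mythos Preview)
Your proposal is correct and, for faithfulness, fullness, and cokernel preservation, follows the paper's proof essentially verbatim: Lemma~\ref{lemma:aux_induces_fsubquot} for the factorisation and faithfulness, projectivity of representables plus Lemma~\ref{lemma:syz_and_ker} for fullness, and a direct identification of $\overline{M}$ applied to Construction~\ref{construction:cokernel} with the pointwise cokernel in $\Modr\PC$.

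The one place where you diverge is image preservation. You compute the $M$-value of the middle object of Construction~\ref{construction:epi_mono_factorization} pointwise and match it with the image of $\overline{M}(\overline{\alpha})$. The paper instead argues more indirectly: since $\overline{M}$ preserves cokernels it preserves epis, and a separate Yoneda-based argument (testing against representables and invoking faithfulness) shows that $\overline{M}$ preserves monos; hence it preserves epi-mono factorisations. Your route is shorter and more concrete, but as written it only identifies the image \emph{object}; to fully claim preservation of images you should also note that $\overline{M}$ of the mono $(A,\alpha\cdot\gamma_B,\rho_B,R_B)\to(B,\gamma_B,\rho_B,R_B)$ is again a mono (an immediate pointwise check). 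The paper's approach buys you the stronger statement that $\overline{M}$ reflects and preserves monos, which is not needed here but is conceptually cleaner.
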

\begin{proof}
 We use the notation of Construction \ref{construction:functor_aux_mod}.
 Since $\CIC( \PC ) = \ACIC( \PC ) / \mathbf{I}(\PC)$,
 we get a faithful induced additive functor $\overline{M}$ by Lemma \ref{lemma:aux_induces_fsubquot}.
 Furthermore, 
 since representable functors are projectives in $\Modr \PC$,
 every natural transformation
 $\frac{\im( -, \gamma_A )}{ \im( -, \rho_A )} \rightarrow \frac{\im( -, \gamma_B )}{ \im( -, \rho_B )}$
 can be lifted to a natural transformation $(-,A) \rightarrow (-,B)$
 and from Lemma \ref{lemma:syz_and_ker}, it follows that $\overline{M}$ is full.
 
 Next, let
 \[
 \overline{\alpha}: (A \stackrel{\gamma_A}{\longrightarrow}\Omega_A \stackrel{\rho_A}{\longleftarrow} R_A) \longrightarrow (B \stackrel{\gamma_B}{\longrightarrow}\Omega_B \stackrel{\rho_B}{\longleftarrow} R_B)
 \]
 denote an arbitrary morphism in $\CIC( \PC )$.
 We have a commutative diagram of the form
 \begin{center}
 \begin{tikzpicture}[label/.style={postaction={
              decorate,
              decoration={markings, mark=at position .5 with \node #1;}}}
              mylabel/.style={thick, draw=none, align=center, minimum width=0.5cm, minimum height=0.5cm,fill=white}]
                \coordinate (r) at (3,0);
                \coordinate (u) at (0,2);
               
                \node (C)  {$(-,A)$};
                \node (D) at ($(C)+(r)$) {$\frac{\im( -, \gamma_A )}{ \im( -, \rho_A )}$};

                \node (C2) at ($(C)-(u)$) {$(-,B)$};
                \node (D2) at ($(C2)+(r)$) {$\frac{\im( -, \gamma_B )}{ \im( -, \rho_B )}.$};

                \draw[->>,thick] (C) -- node[above]{$\epsilon_A$} (D);
                
                \draw[->>,thick] (C2) -- node[above]{$\epsilon_B$} (D2);

                \draw[->,thick] (C) --node[left]{$(-,\alpha)$} (C2);
                \draw[->,thick] (D) --node[right]{$\overline{M}(\overline{\alpha})$} (D2);
 \end{tikzpicture}
 \end{center}
 We compute
 \begin{align*}
  \image\left( \overline{M}(\overline{\alpha}) \right) &= \image\left( \epsilon_A \cdot \overline{M}(\overline{\alpha}) \right) \\
  &= \image\left( (-,\alpha) \cdot \epsilon_B \right) \\
 \end{align*}
 which yields for every $P \in \PC$:
 \begin{align*}
  \image\left( \overline{M}(\overline{\alpha}) \right)(P) 
  &=
  \left\{ (P \stackrel{\sigma \cdot ( \alpha \cdot \gamma_B )}{\longrightarrow} \Omega_B) + \left(\image( P, \rho_B) \cap \image( P, \gamma_B ) \right) \mid \sigma \in (P,A) \right\} \\
  &=
  \left\{ (P \stackrel{\iota}{\longrightarrow} \Omega_B) + \left(\image( P, \rho_B) \cap \image( P, \gamma_B ) \right) \mid \iota \in \image(P,\alpha \cdot \gamma_B) \right\}. \\
 \end{align*}
 Thus, we can describe the cokernel projection of $\overline{M}(\overline{\alpha})$
 by the right vertical morphism in the diagram
\begin{center}
 \begin{tikzpicture}[label/.style={postaction={
              decorate,
              decoration={markings, mark=at position .5 with \node #1;}}}
              mylabel/.style={thick, draw=none, align=center, minimum width=0.5cm, minimum height=0.5cm,fill=white}]
                \coordinate (r) at (6,0);
                \coordinate (u) at (0,2);
               
                \node (C)  {$(-,B)$};
                \node (D) at ($(C)+(r)$) {$\frac{\im( -, \gamma_B )}{ \im( -, \rho_B )}$};

                \node (C2) at ($(C)-(u)$) {$(-,B)$};
                \node (D2) at ($(C2)+(r)$) {$\frac{\im( -, \gamma_B )}{ \im( -, \rho_B ) + \im( -,\alpha \cdot \gamma_B)}$,};

                \draw[->>,thick] (C) -- node[above]{$\epsilon_B$} (D);
                
                \draw[->>,thick] (C2) -- node[above]{$\epsilon_B$} (D2);

                \draw[->,thick] (C) --node[left]{$(-,\id_B)$} (C2);
                \draw[->,thick] (D) --node[right]{} (D2);
 \end{tikzpicture}
 \end{center}
 which is exactly the application of $\overline{M}$ to the cokernel projection described in Construction \ref{construction:cokernel}.
 Thus, $\overline{M}$ respects cokernels.
 
 To show that $\overline{M}$ respects images, it suffices to prove that it respects monos and epis, since this
 implies that it respects epi-mono factorizations.
 Since $\overline{M}$ is additive and respects cokernels, it follows that $\overline{M}$ respects epimorphisms.
 Now, let
 \[
 \overline{\alpha}: (A \stackrel{\gamma_A}{\longrightarrow}\Omega_A \stackrel{\rho_A}{\longleftarrow} R_A) \longrightarrow (B \stackrel{\gamma_B}{\longrightarrow}\Omega_B \stackrel{\rho_B}{\longleftarrow} R_B)
 \]
 denote a mono in $\CIC( \PC )$.
 In order to test whether $\overline{M}(\overline{\alpha})$ is a mono,
 the Yoneda lemma implies that it suffices to check test morphisms of the form
 \[
  \tau: (-,P) \longrightarrow \overline{M}(A \stackrel{\gamma_A}{\longrightarrow}\Omega_A \stackrel{\rho_A}{\longleftarrow} R_A).
 \]
 So, given $\tau$ as above which also is a test morphism, i.e., such that $\tau \cdot \overline{M}(\overline{\alpha}) = 0$,
 it can be written as
\[
  \overline{M}( \overline{\tau'} ): \overline{M}(\emb(P)) \longrightarrow \overline{M}(A \stackrel{\gamma_A}{\longrightarrow}\Omega_A \stackrel{\rho_A}{\longleftarrow} R_A)
\]
for a uniquely determined 
\[
 \overline{\tau'}: \emb(P) \rightarrow (A \stackrel{\gamma_A}{\longrightarrow}\Omega_A \stackrel{\rho_A}{\longleftarrow} R_A)
\]
in $\CIC( \PC )$ (in fact, $\overline{M} \circ \emb$ is the Yoneda embedding). Since $\overline{\alpha}$ is a mono,
it follows that $\overline{\tau'} = 0$, and thus $\overline{M}( \overline{\tau'} ) = \tau = 0$.
\end{proof}

Recall that a subcategory $\AC$ of a category $\BC$ is called \textbf{replete}
if for any $X \in \AC$ and isomorphism $\iota: X \rightarrow Y$ in $\BC$,
$\iota$ belongs to $\AC$.
We get a characterization of the \textbf{essential image} $\im( \overline{M} ) \subseteq \Modr\PC$,
i.e., the smallest full replete subcategory generated by all objects
of the form $\overline{M}(A \stackrel{\gamma_A}{\longrightarrow}\Omega_A \stackrel{\rho_A}{\longleftarrow} R_A)$.
Note that by Theorem \ref{theorem:main_theorem},
we have an equivalence $\CIC( \PC ) \simeq \im( \overline{M} )$.

\begin{corollary}\label{corollary:characterization_as_cokernel_image_closure}
 The essential image of $\overline{M}$ is given by the smallest full and replete additive subcategory $\FC\subseteq \Modr\PC$
 with the following properties:
 \begin{enumerate}
  \item $\PC \subseteq \FC$ via the Yoneda embedding,
  \item $\FC$ is closed under taking cokernels in $\Modr\PC$,
  \item $\FC$ is closed under taking images in $\Modr\PC$.
 \end{enumerate}
\end{corollary}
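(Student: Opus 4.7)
The plan is to prove the two inclusions implicit in the minimality claim: first, that $\im(\overline{M})$ is itself a full replete additive subcategory of $\Modr\PC$ satisfying (1)--(3); second, that any full replete additive subcategory $\FC \subseteq \Modr\PC$ with these properties contains $\im(\overline{M})$.

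For the first half, property (1) follows from the direct computation
\[
\overline{M}(\emb(A)) \;=\; \frac{\im(-,\id_A)}{\im(-,0)} \;\cong\; (-,A),
\]
so every representable functor is isomorphic to an object of the essential image. For (2) and (3), consider a morphism $f \colon F \to G$ in $\Modr\PC$ with $F, G \in \im(\overline{M})$. By repleteness we may assume $F = \overline{M}(X)$ and $G = \overline{M}(Y)$; by fullness of $\overline{M}$ (Theorem \ref{theorem:main_theorem}) there is $\overline{\alpha}\colon X \to Y$ in $\CIC(\PC)$ with $\overline{M}(\overline{\alpha}) = f$. Since $\CIC(\PC)$ has cokernels (Construction \ref{construction:cokernel}) and universal epi-mono factorizations (Construction \ref{construction:epi_mono_factorization}), and $\overline{M}$ preserves both cokernels and images (Theorem \ref{theorem:main_theorem}), both $\cokernel(f)$ and $\image(f)$ lie in $\im(\overline{M})$. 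Additivity follows because $\overline{M}$ is an additive functor and $\CIC(\PC)$ admits finite direct sums, so $\im(\overline{M})$ is closed under finite direct sums.

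For the second half, let $\FC$ satisfy (1)--(3) and let $(A \stackrel{\gamma_A}{\longrightarrow} \Omega_A \stackrel{\rho_A}{\longleftarrow} R_A)$ be an arbitrary object of $\CIC(\PC)$. Using the subquotient convention,
\[
\overline{M}\bigl((A \stackrel{\gamma_A}{\longrightarrow} \Omega_A \stackrel{\rho_A}{\longleftarrow} R_A)\bigr) \;=\; \frac{\im(-,\gamma_A)}{\im(-,\rho_A)} \;=\; \frac{\im(-,\gamma_A) + \im(-,\rho_A)}{\im(-,\rho_A)},
\]
which is precisely the image in $\Modr\PC$ of the composite
\[
(-,A) \xrightarrow{(-,\gamma_A)} (-,\Omega_A) \twoheadrightarrow \cokernel\bigl((-,\rho_A)\bigr).
\]
Since $(-,R_A), (-,\Omega_A), (-,A) \in \FC$ by (1), the cokernel lies in $\FC$ by (2), and the image of the composite lies in $\FC$ by (3). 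Repleteness of $\FC$ yields $\im(\overline{M}) \subseteq \FC$.

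The main obstacle is essentially a transfer argument: one must check that closure of $\FC$ under cokernels and images in the ambient category $\Modr\PC$ transports faithfully across the fully faithful embedding $\overline{M}$, which requires both the fullness of $\overline{M}$ and the preservation of these universal constructions. Both are supplied by Theorem \ref{theorem:main_theorem}, so the corollary reduces to a routine verification once one recognises the factorisation of an arbitrary object as the image of a morphism into a cokernel of representables.
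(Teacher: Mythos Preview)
Your proof is correct and follows essentially the same approach as the paper: the paper's proof is terser, appealing directly to Theorem~\ref{theorem:main_theorem} for the first half, but your expanded argument via fullness and preservation of cokernels/images is exactly what underlies that appeal. The second half is identical in substance, relying on the same key identification of $\frac{\im(-,\gamma_A)}{\im(-,\rho_A)}$ as the image of the composite $(-,A) \to (-,\Omega_A) \to \cokernel\bigl((-,\rho_A)\bigr)$.
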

\begin{proof}
 The essential image of $\overline{M}$ satisfies these three properties by Theorem \ref{theorem:main_theorem}.
 Conversely, every $\FC$ satisfying these properties has
 to contain the subquotients
 \begin{equation}\label{equation:im_quo_as_im_of_fp}
 \frac{\im(-,{\gamma_A})}{\im(-,{\rho_A})} \simeq \im\big( (-,A) \stackrel{(-,\gamma_A)}{\longrightarrow} (-,\Omega_A) \longrightarrow \cokernel(-,\rho_A) \big)
 \end{equation}
 for a given cospan
 $(A \stackrel{\gamma_A}{\longrightarrow}\Omega_A \stackrel{\rho_A}{\longleftarrow} R_A)$
 in $\PC$, and thus has to contain the essential image of $\overline{M}$.
\end{proof}

We give a short interlude on some well-known facts about the category of finitely presented functors $\fp( \PC^{\op}, \Ab )$.
For an abstract treatment of $\fp( \PC^{\op}, \Ab )$, see \cite{FreydRep} or \cite{BelFredCats},
for a constructive treatment, see \cite{PosFreyd}.

An additive functor $F: \PC^{\op} \rightarrow \Ab$
is called \textbf{finitely presented} if there exists an exact sequence
\begin{center}
            \begin{tikzpicture}[label/.style={postaction={
              decorate,
              decoration={markings, mark=at position .5 with \node #1;}}},
              mylabel/.style={thick, draw=none, align=center, minimum width=0.5cm, minimum height=0.5cm,fill=white}]
                \coordinate (r) at (3,0);
                \coordinate (u) at (0,1.2);
                \node (B) at ($(A)$) {$(-,B)$};
                \node (C) at ($(B)+(r)$) {$(-,A)$};
                \node (D) at ($(C)+(r)$) {$F$};
                \node (E) at ($(D)+(r)$) {$0$};
                \draw[->,thick] (B) --node[above]{$(-,\alpha)$} (C);
                \draw[->,thick] (C) -- (D);
                \draw[->,thick] (D) -- (E);
            \end{tikzpicture}
\end{center}
in $\Modr \PC$ for a morphism $\alpha: B \rightarrow A$ in $\PC$.
Now, $\fp(\PC^{\op}, \Ab)$ is defined as the full subcategory of $\Modr\PC$ generated by all finitely presented functors.
The additive category $\fp(\PC^{\op}, \Ab)$ is closed under taking cokernels in $\Modr\PC$,
and thus can be characterized similarly to $\im( \overline{M} )$:
it is the smallest full and replete additive subcategory of $\Modr\PC$ which contains all representable functors and
is closed under taking cokernels.
In particular, Corollary \ref{corollary:characterization_as_cokernel_image_closure}
implies $\fp(\PC^{\op}, \Ab) \subseteq \im( \overline{M} )$, which brings us to a second characterization.

\begin{corollary}\label{corollary:characterization_as_cokernel_image_closure_2}
  The essential image of $\overline{M}$ is given by the smallest full and replete additive subcategory $\FC\subseteq \Modr\PC$
  with the following properties:
  \begin{enumerate}
   \item $\fp(\PC^{\op}, \Ab) \subseteq \FC$,
   \item $\FC$ is closed under taking images in $\Modr\PC$.
  \end{enumerate}
 \end{corollary}
\begin{proof}
  Equation \eqref{equation:im_quo_as_im_of_fp} in the proof of Corollary \ref{corollary:characterization_as_cokernel_image_closure}
  shows that every object in $\im( \overline{M} )$ is given as an image of a morphism between finitely presented functors.
\end{proof}
 
As we have seen in Section \ref{section:subquotients},
the category $\im( \overline{M} )$ admits a diagrammatic approach via the category $\CIC( \PC )$
which allows for a computer implementation.
The same is true for the category $\fp(\PC^{\op}, \Ab)$: it is equivalent to the so-called Freyd category
$\Freyd( \PC )$ whose objects are given by morphisms $(A\stackrel{\rho}{\leftarrow} R)$ in $\PC$,
and a morphism from $(A \stackrel{\rho}{\leftarrow} R)$ to $(A' \stackrel{\rho'}{\leftarrow} R')$
is given by a morphism $A \stackrel{\alpha}{\rightarrow} A'$ such that there exists
a morphism $R\stackrel{\omega}{\rightarrow} R'$ which renders the diagram
\begin{center}
  \begin{tikzpicture}[label/.style={postaction={
    decorate,
    decoration={markings, mark=at position .5 with \node #1;}}},
    mylabel/.style={thick, draw=none, align=center, minimum width=0.5cm, minimum height=0.5cm,fill=white}]
      \coordinate (r) at (3,0);
      \coordinate (d) at (0,-1.5);
      \node (B) {$R$};
      \node (C) at ($(B)-(r)$) {$A$};
      \node (D) at ($(A)+(d)$) {$R'$};
      \node (E) at ($(D)-(r)$) {$A'$};
      \draw[->,thick] (B) --node[above]{$\rho$} (C);
      \draw[->,dashed,thick] (B) --node[right]{$\omega$} (D);
      \draw[->,thick] (C) --node[left]{$\alpha$} (E);
      \draw[->,thick] (D) --node[above]{$\rho'$} (E);
  \end{tikzpicture}
\end{center}
commutative. Such a diagram represents the zero morphism if and only if $\alpha$ factors as
$\alpha = \lambda \cdot \rho'$ for some morphism $A \stackrel{\lambda}{\rightarrow} R'$.
For details on possible constructions in $\Freyd( \PC )$, we refer the reader to \cite[Section 3]{PosFreyd}.
We have an equivalence of categories given by:
\[
  \Freyd( \PC ) \rightarrow \fp(\PC^{\op}, \Ab): (A \stackrel{\rho}{\leftarrow} R) \mapsto \cokernel( (-,\rho) ).
\]
Moreover, it is easy to see that the mapping
\[
  \Freyd( \PC ) \rightarrow \CIC( \PC ): (A \stackrel{\rho}{\leftarrow} R) \mapsto (A \stackrel{\id_A}{\longrightarrow} A \stackrel{\rho}{\longleftarrow} R)
\]
gives rise to a functor such that we end up with a diagram of functors
\begin{center}
  \begin{tikzpicture}[label/.style={postaction={
    decorate,
    decoration={markings, mark=at position .5 with \node #1;}}},
    mylabel/.style={thick, draw=none, align=center, minimum width=0.5cm, minimum height=0.5cm,fill=white}]
      \coordinate (r) at (5,0);
      \coordinate (d) at (0,-1.5);
      \node (Fp) {$\fp(\PC^{\op}, \Ab)$};
      \node (I) at ($(B)+(r)$) {$\im( \overline{M} )$};
      \node (R) at ($(I)+0.5*(r)$) {$\Modr\PC$};
      \node (A) at ($(A)+(d)$) {$\Freyd( \PC )$};
      \node (Q) at ($(D)+(r)$) {$\CIC( \PC )$};
      \draw[right hook->,thick] (Fp) -- (I);
      \draw[right hook->,thick] (A) -- (Q);
      \draw[right hook->,thick] (I) -- (R);
      \draw[->,thick] (A) --node[rotate=90,yshift=0.2em]{$\sim$} (Fp);
      \draw[->,thick] (Q) --node[rotate=90,yshift=0.2em]{$\sim$} (I);
  \end{tikzpicture}
\end{center}
commutative up to natural isomorphism.
In the next section, we will characterize the case in which the inclusion
$\fp(\PC^{\op}, \Ab) \subseteq \im( \overline{M} )$ is an equivalence.

\section{The abelian case}\label{section:abelian_case}

The goal of this section is to prove 
the following characterization of the abelian case.

\begin{theorem}\label{theorem:abelian_case}
 The following are equivalent:
 \begin{enumerate}
  \item $\PC$ has weak kernels,
  \item $\CIC( \PC )$ has kernels,
  \item $\CIC( \PC )$ is abelian,
  \item $\fp( \PC^{\op}, \Ab )$ is abelian,
  \item $\CIC( \PC )$ and $\fp( \PC^{\op}, \Ab )$ are equal as full and replete subcategories of $\Modr \PC$,
  \item $\CIC( \PC )$ and $\fp( \PC^{\op}, \Ab )$ are equivalent as (abstract) categories,
  \item $\fp( \PC^{\op}, \Ab )$ has epi-mono factorizations.
 \end{enumerate}
\end{theorem}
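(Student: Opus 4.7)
The plan is to prove the seven equivalences via two cycles: a computational cycle $(1) \Rightarrow (2) \Rightarrow (3) \Rightarrow (1)$ for the $\CIC(\PC)$-centric conditions, and a functorial cycle $(4) \Rightarrow (5) \Rightarrow (6) \Rightarrow (7) \Rightarrow (1)$ passing through $\fp(\PC^{\op},\Ab)$, with Freyd's theorem supplying $(1) \Leftrightarrow (4)$.

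First I would handle the $\CIC$-cycle. The implication $(1) \Rightarrow (2)$ is the main constructive content of Section \ref{section:abelian_case}: weak kernels in $\PC$ are used to build kernels in $\CIC(\PC)$ via explicit cospan diagrams, developed separately. For $(2) \Rightarrow (3)$, the category $\CIC(\PC)$ already has cokernels (Construction \ref{construction:cokernel}), and every monomorphism is the kernel of its cokernel (Construction \ref{construction:lift_along_mono}). Dually, every epimorphism $f$ with kernel $k$ is the cokernel of $k$: a morphism $\tau$ satisfying $k \cdot \tau = 0$ is precisely a test morphism for $f$ in the sense of Construction \ref{construction:colift_along_epi}, so the colift produced there yields the required factorization through $f$, with uniqueness coming from $f$ being epi. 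Hence $\CIC(\PC)$ is abelian. For $(3) \Rightarrow (1)$: given $f: A \rightarrow B$ in $\PC$, form the kernel $K$ of $\overline{f}$ in $\CIC(\PC)$, cover $K$ via an epi $\emb(W) \twoheadrightarrow K$ (Lemma \ref{lemma:convenient_covers}, with $W$ the first component of the cospan $K$), and verify that the induced $w: W \rightarrow A$ in $\PC$ is a weak kernel of $f$. The decisive observation is that any morphism $\emb(Q) \rightarrow K$ from a representable is, by the definition of $\ACIC(\PC)$, tautologically given by a morphism $Q \rightarrow W$ in $\PC$, so every factorization through $K$ lifts through $\emb(W)$.

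Next, $(1) \Leftrightarrow (4)$ is invoked directly from Freyd. For $(4) \Rightarrow (5)$ I would note that monomorphisms and epimorphisms in $\fp(\PC^{\op},\Ab)$ coincide with those in $\Modr\PC$ (both can be tested pointwise via representables, which lie in $\fp$). Hence if $\fp$ is abelian, its images agree with those taken in $\Modr\PC$, making $\fp$ a full replete subcategory of $\Modr\PC$ containing $\PC$ and closed under both cokernels and images. By Corollary \ref{corollary:characterization_as_cokernel_image_closure} this forces $\CIC(\PC) \subseteq \fp$, while the reverse inclusion $\fp \subseteq \CIC(\PC)$ is automatic since $\CIC(\PC)$ contains the representables and is closed under cokernels. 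The step $(5) \Rightarrow (6)$ is trivial, and $(6) \Rightarrow (7)$ follows because an abstract equivalence transports the existence of epi-mono factorizations from $\CIC(\PC)$ (Construction \ref{construction:epi_mono_factorization}) to $\fp$.

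The most delicate link is $(7) \Rightarrow (1)$. Given $f: A \rightarrow B$ in $\PC$, take an epi-mono factorization of $(-,f)$ in $\fp$; by the coincidence of monos and epis with those in $\Modr\PC$, it must agree up to unique isomorphism with $(-,A) \twoheadrightarrow \im(-,f) \hookrightarrow (-,B)$, so $\im(-,f) \in \fp$, i.e., $\im(-,f)$ is finitely presented. Pick a presentation $(-,W_0) \rightarrow (-,A_0) \rightarrow \im(-,f) \rightarrow 0$ and apply Schanuel's lemma in $\Modr\PC$ to this and to the surjection $(-,A) \twoheadrightarrow \im(-,f)$: the kernel $\kernel(-,f)$ is stably isomorphic to the finitely generated image of $(-,W_0) \rightarrow (-,A_0)$, hence is itself finitely generated, and any surjection $(-,W) \twoheadrightarrow \kernel(-,f)$ yields the desired weak kernel $w: W \rightarrow A$. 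The main obstacles are this Schanuel-type argument (requiring the careful identification of monos and epis across $\fp$ and $\Modr\PC$) together with the explicit construction of kernels in $(1) \Rightarrow (2)$; the remaining implications reduce to diagram chases using the machinery of Sections \ref{section:subquotients} and \ref{section:modules}.
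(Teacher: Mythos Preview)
Your proposal is correct and follows essentially the same two-cycle structure as the paper's proof, with the same chain of implications and the same key ingredients (Freyd's theorem, the explicit kernel construction, Corollary~\ref{corollary:characterization_as_cokernel_image_closure}, and the colift-along-epi construction). Two minor remarks: your parenthetical that epimorphisms in $\fp(\PC^{\op},\Ab)$ and in $\Modr\PC$ coincide because both ``can be tested pointwise via representables'' is fine for monos but not literally correct for epis (representables are generators, not cogenerators); the paper instead shows the inclusion preserves cokernels and deduces preservation of epis from that. For $(7)\Rightarrow(1)$ you invoke Schanuel's lemma where the paper proves the equivalent finiteness statement via the snake lemma (Lemma~\ref{lemma:facts_on_fpmod}(2)); since $(-,A)$ is projective in $\Modr\PC$, Schanuel applies and the two arguments are interchangeable.
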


The first two subsections in this section are devoted to the construction 
of kernels in $\CIC( \PC )$, and the third subsection to
the proof of Theorem \ref{theorem:abelian_case}.

\subsection{A weakening of weak pullbacks}
A \textbf{weak limit} of a diagram in a category
can be defined exactly as one would define a limit, but without
requiring the morphism induced by its universal property
to be uniquely determined.
Applied to the concept of a pullback, the resulting notion is known as a \textbf{weak pullback}.
In this section, we introduce a further weakening:
we give up the commutativity of one of the two resulting 
triangles in the common pullback diagram describing its
universal property.

\begin{definition}\label{definition:biased_weak_pullbacks}
 Let $\PC$ be an additive category. For a given cospan $A \stackrel{\alpha}{\longrightarrow} B \stackrel{\gamma}{\longleftarrow} C$ in $\PC$,
 a \textbf{biased weak pullback} consists of the following data:
 \begin{enumerate}
  \item An object $P( \alpha, \gamma ) \in \PC$.
  \item A morphism $\pi( \alpha, \gamma ): P( \alpha, \gamma ) \rightarrow A$
        with the property that there exists another morphism $\omega: P( \alpha, \gamma ) \rightarrow C$
        with $\omega \cdot \gamma = \pi( \alpha, \gamma ) \cdot \alpha$.
        We call $\pi( \alpha, \gamma )$ the \textbf{biased weak pullback projection}.
  \item An \operation that \constructs for $T \in \PC$ and a morphism
        $\tau: T \rightarrow A$ with the property $\exists \sigma: T \rightarrow C: \tau \cdot \alpha = \sigma \cdot \gamma$
        a morphism $u(\tau): T \rightarrow P(\alpha, \gamma)$ satisfying
        \begin{center}
        $\tau = u(\tau) \cdot \pi( \alpha, \gamma )$.
        \end{center}
 \end{enumerate}
 Thus, we have the following diagram in which only the indicated parts commute,
 and the dashed morphism is not necessarily uniquely determined:
 \begin{center}
          \begin{tikzpicture}[label/.style={postaction={
            decorate,
            decoration={markings, mark=at position .5 with \node #1;}}}, baseline = (D),
            mylabel/.style={thick, draw=none, align=center, minimum width=0.5cm, minimum height=0.5cm,fill=white}]
              \coordinate (r) at (2.5,0);
              \coordinate (u) at (0,2);
              \node (A) {$C$};
              \node (B) at ($(A)+1*(r)$) {$B$};
              \node (C) at ($(B) +(u)$) {$A$};
              \node (D) at ($(A) + (u)$) {$P(\alpha, \gamma)$};
              \node (T) at ($(D) + (u) - (2.5,0)$) {$T$};
              \node (comm1) at ($(A) + 0.5*(r) + 0.5*(u)$) {$\circlearrowleft$};
              \node (comm2) at ($(T) + 1*(r) - 0.4*(u)$) {$\circlearrowleft$};
              \draw[->,thick] (A) -- node[below]{$\gamma$} (B);
              \draw[->,thick] (C) -- node[right]{$\alpha$} (B);
              \draw[->,thick] (D) -- node[left]{$\omega$} (A);
              \draw[->,thick] (D) -- node[above]{$\pi( \alpha, \gamma )$} (C);
              \draw (T) [->,thick,out=250,in=160] to node[below,xshift=-0.5em]{$\sigma$} (A);
              \draw (T) [->,thick,out=20,in=110] to node[above]{$\tau$} (C);
              \draw[->,thick,dashed] (T) -- node[mylabel]{$u(\tau)$} (D);
          \end{tikzpicture}
        \end{center}
\end{definition}

\begin{remark}
 A biased weak pullback of a given cospan $A \stackrel{\alpha}{\longrightarrow} B \stackrel{\gamma}{\longleftarrow} C$
 is the same as a weak terminal object in $\SyzC(A \stackrel{\alpha}{\longrightarrow} B \stackrel{\gamma}{\longleftarrow} C)$.
\end{remark}

We say that $\PC$ \textbf{has biased weak pullbacks}
if it comes equipped with an operation constructing the triple $(P( \alpha, \gamma ), \pi( \alpha, \gamma ), u )$
for given input cospan $A \stackrel{\alpha}{\longrightarrow} B \stackrel{\gamma}{\longleftarrow} C$. 

\begin{lemma}\label{lemma:biased_weak_pb_weak_kernels}
 The following are equivalent:
 \begin{enumerate}
  \item $\PC$ has biased weak pullbacks,
  \item $\PC$ has weak pullbacks,
  \item $\PC$ has weak kernels.
 \end{enumerate}
\end{lemma}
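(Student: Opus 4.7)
The plan is to prove the implications in the cycle $(2) \Rightarrow (1) \Rightarrow (3) \Rightarrow (2)$, exploiting the additive structure of $\PC$ in the final implication.

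First I observe that $(2) \Rightarrow (1)$ is immediate: a weak pullback $P$ of $A \stackrel{\alpha}{\to} B \stackrel{\gamma}{\leftarrow} C$ with projections $\omega: P \to A$ and $\pi: P \to C$ satisfying $\omega \cdot \alpha = \pi \cdot \gamma$ is in particular a biased weak pullback, where $\pi$ serves as the biased projection and the induced morphism $u(\tau)$ for a compatible pair $(\sigma, \tau)$ satisfies both $u(\tau) \cdot \omega = \sigma$ and $u(\tau) \cdot \pi = \tau$, hence a fortiori the single equation required in Definition \ref{definition:biased_weak_pullbacks}.

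For $(1) \Rightarrow (3)$, given a morphism $f: A \to B$, I would apply the biased weak pullback construction to the cospan $0 \stackrel{0}{\to} B \stackrel{f}{\leftarrow} A$. This yields an object $P(0, f)$ together with a projection $\pi(0, f): P(0,f) \to A$ such that there exists $\omega: P(0,f) \to 0$ with $\omega \cdot 0 = \pi(0,f) \cdot f$, which forces $\pi(0,f) \cdot f = 0$. For the weak universal property of the kernel, any morphism $\tau: T \to A$ with $\tau \cdot f = 0$ trivially admits $\sigma = 0: T \to 0$ with $\tau \cdot f = \sigma \cdot 0$, so the biased weak pullback property provides $u(\tau): T \to P(0,f)$ with $\tau = u(\tau) \cdot \pi(0,f)$. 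Thus $\pi(0,f)$ is a weak kernel of $f$.

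For $(3) \Rightarrow (2)$, I would use the standard additive construction: given a cospan $A \stackrel{\alpha}{\to} B \stackrel{\gamma}{\leftarrow} C$, form the morphism $\begin{pmatrix}\alpha \\ -\gamma\end{pmatrix}: A \oplus C \to B$ and take a weak kernel $\kappa: K \to A \oplus C$. Writing $\kappa = \pmatrow{\omega}{\pi}$ via the projections, the equation $\kappa \cdot \begin{pmatrix}\alpha \\ -\gamma\end{pmatrix} = 0$ gives $\omega \cdot \alpha = \pi \cdot \gamma$. For the universal property, any compatible pair $\sigma: T \to A$, $\tau: T \to C$ with $\sigma \cdot \alpha = \tau \cdot \gamma$ assembles into $\pmatrow{\sigma}{\tau}: T \to A \oplus C$ with $\pmatrow{\sigma}{\tau} \cdot \begin{pmatrix}\alpha \\ -\gamma\end{pmatrix} = 0$, so the weak kernel property yields $u: T \to K$ with $u \cdot \omega = \sigma$ and $u \cdot \pi = \tau$. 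No step should present a substantive obstacle; the key conceptual point is simply the observation that pulling back along the zero morphism $0 \to B$ transforms the biased condition into the one-sided condition characterizing a weak kernel, since the existence of the auxiliary morphism $\sigma$ into $0$ is automatic.
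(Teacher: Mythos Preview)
Your proposal is correct and follows essentially the same approach as the paper: the same cycle $(2)\Rightarrow(1)\Rightarrow(3)\Rightarrow(2)$, with the biased weak pullback of the cospan $0\to B\stackrel{f}{\leftarrow}A$ giving a weak kernel of $f$, and the standard direct-sum construction recovering weak pullbacks from weak kernels. The paper's proof is simply a terse three-sentence version of exactly what you wrote out.
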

\begin{proof}
 If $\PC$ has biased weak pullbacks, 
 then $P(A \stackrel{{\alpha}}{\longrightarrow} B, 0 {\longrightarrow} B)$ is a weak kernel of $\alpha$.
 Moreover, we can construct weak pullbacks from direct sums and weak kernels.
 Last, every weak pullback is also a biased weak pullback.
\end{proof}

Despite the statement of Lemma \ref{lemma:biased_weak_pb_weak_kernels},
biased weak pullbacks are important for us because of two reasons:
\begin{enumerate}
 \item They have fewer constraints than weak pullbacks and are thus easier to compute.
 \item They are all we need in the construction of kernels in $\CIC(\PC)$.
\end{enumerate}
We demonstrate the first of these arguments
with our running example $\Rows_R$.

\begin{lemma}\label{lemma:biased_weak_pullbacks_rows}
 Let $a,b,c,p \in \Nzero$. A commutative square in $\Rows_R$
 \begin{center}
          \begin{tikzpicture}[label/.style={postaction={
            decorate,
            decoration={markings, mark=at position .5 with \node #1;}}}, baseline = (D),
            mylabel/.style={thick, draw=none, align=center, minimum width=0.5cm, minimum height=0.5cm,fill=white}]
              \coordinate (r) at (2.5,0);
              \coordinate (u) at (0,2);
              \node (A) {$R^{1 \times c}$};
              \node (B) at ($(A)+1*(r)$) {$R^{1 \times b}$};
              \node (C) at ($(B) +(u)$) {$R^{1 \times a}$};
              \node (D) at ($(A) + (u)$) {$R^{1 \times p}$};
              \node (comm1) at ($(A) + 0.5*(r) + 0.5*(u)$) {$\circlearrowleft$};
              \draw[->,thick] (A) -- node[below]{$\gamma$} (B);
              \draw[->,thick] (C) -- node[right]{$\alpha$} (B);
              \draw[->,thick] (D) -- node[left]{$\omega$} (A);
              \draw[->,thick] (D) -- node[above]{$\pi$} (C);
          \end{tikzpicture}
        \end{center}
 is a biased weak pullback in $\Rows_R$ with biased weak pullback projection $\pi$ if and only if
 \[
  \im( \pi ) = \alpha^{-1}( \im( \gamma ) )
 \]
 as submodules of $R^{1 \times a }$ in $R\Modl$.
\end{lemma}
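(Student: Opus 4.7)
The plan is to translate both the biased weak pullback conditions and the image-equality condition into the concrete language of row containments between submodules of $R^{1 \times c}$, at which point the equivalence becomes essentially tautological. The key observation I will exploit throughout is that for a morphism $\tau : R^{1 \times t} \to R^{1 \times c}$ and a morphism $\pi : R^{1 \times p} \to R^{1 \times c}$, the lifting problem $u \cdot \pi = \tau$ in $\Rows_R$ admits a solution $u$ if and only if $\im(\tau) \subseteq \im(\pi)$ as submodules of $R^{1 \times c}$, since the rows of $\tau$ must lie in the row span of $\pi$.

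For the forward direction, I assume the square is a biased weak pullback. First, the existence of a witness $\omega$ with $\omega \cdot \alpha = \pi \cdot \gamma$ immediately gives $\im(\pi \cdot \gamma) \subseteq \im(\alpha)$, equivalently $\im(\pi) \subseteq \gamma^{-1}(\im(\alpha))$. For the reverse inclusion, I pick an arbitrary element $x \in \gamma^{-1}(\im(\alpha))$, view it as a morphism $\tau : R^{1 \times 1} \to R^{1 \times c}$, and note that $\tau \cdot \gamma$ lies in $\im(\alpha)$, so a lift $\sigma$ with $\sigma \cdot \alpha = \tau \cdot \gamma$ exists. The biased weak pullback property supplies $u : R^{1 \times 1} \to R^{1 \times p}$ with $u \cdot \pi = \tau$, so $x \in \im(\pi)$.

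For the backward direction, I assume $\im(\pi) = \gamma^{-1}(\im(\alpha))$. The inclusion $\im(\pi) \subseteq \gamma^{-1}(\im(\alpha))$ means $\im(\pi \cdot \gamma) \subseteq \im(\alpha)$, which by the lifting principle produces the required witness $\omega$ with $\omega \cdot \alpha = \pi \cdot \gamma$. For the universal property, given $\tau : R^{1 \times t} \to R^{1 \times c}$ together with $\sigma$ satisfying $\tau \cdot \gamma = \sigma \cdot \alpha$, I observe that $\im(\tau \cdot \gamma) \subseteq \im(\alpha)$, hence $\im(\tau) \subseteq \gamma^{-1}(\im(\alpha)) = \im(\pi)$, and the lifting principle again yields a $u$ with $u \cdot \pi = \tau$.

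I do not foresee any genuine obstacle here; the only mildly delicate point is being careful that the two directions of the translation "lift exists iff image containment" for row-module morphisms are used correctly, and that the reverse inclusion in the forward direction really can be tested on rank-one test objects $R^{1 \times 1}$, which works because $\gamma^{-1}(\im(\alpha))$ is the set-theoretic preimage and therefore determined elementwise.
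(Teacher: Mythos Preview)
Your proof is correct and follows essentially the same approach as the paper's. Both arguments hinge on the same lifting principle (which the paper phrases as projectivity of $R^{1\times t}$), and both test the reverse inclusion $\gamma^{-1}(\im(\alpha))\subseteq\im(\pi)$ elementwise via $R^{1\times 1}$; the only minor redundancy is that in your backward direction you re-derive the witness $\omega$, which is already part of the given commutative square.
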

\begin{proof}
Whenever we have a commutative square of the form
\begin{center}
    \begin{tikzpicture}[label/.style={postaction={
      decorate,
      decoration={markings, mark=at position .5 with \node #1;}}}, baseline = (D),
      mylabel/.style={thick, draw=none, align=center, minimum width=0.5cm, minimum height=0.5cm,fill=white}]
        \coordinate (r) at (2.5,0);
        \coordinate (u) at (0,2);
        \node (A) {$R^{1 \times c}$};
        \node (B) at ($(A)+1*(r)$) {$R^{1 \times b}$,};
        \node (C) at ($(B) +(u)$) {$R^{1 \times a}$};
        \node (D) at ($(A) + (u)$) {$R^{1 \times t}$};
        \node (comm1) at ($(A) + 0.5*(r) + 0.5*(u)$) {$\circlearrowleft$};
        \draw[->,thick] (A) -- node[below]{$\gamma$} (B);
        \draw[->,thick] (C) -- node[right]{$\alpha$} (B);
        \draw[->,thick] (D) -- node[left]{$\sigma$} (A);
        \draw[->,thick] (D) -- node[above]{$\tau$} (C);
    \end{tikzpicture}
\end{center}
we have an inclusion $\im( \tau ) \subseteq \alpha^{-1}( \im( \gamma ) )$.
Now, if $\im( \pi ) = \alpha^{-1}( \im( \gamma ) )$,
then we get a morphism $u(\tau)$ by the projectivity of $R^{1 \times t}$ in $R\Modl$
rendering the diagram
\begin{center}
    \begin{tikzpicture}[label/.style={postaction={
      decorate,
      decoration={markings, mark=at position .5 with \node #1;}}}, baseline = (D),
      mylabel/.style={thick, draw=none, align=center, minimum width=0.5cm, minimum height=0.5cm,fill=white}]
        \coordinate (r) at (2.5,0);
        \coordinate (u) at (0,1.5);
        \node (A) {$R^{1 \times p}$};
        \node (B) at ($(A)+2*(r)$) {$\im( \pi )$};
        \node (C) at ($(B)+(r)$) {$R^{1 \times a}$};
        \node (D) at ($(B) + (u)$) {$\im(\tau)$};
        \node (E) at ($(D) + (u)$) {$R^{1 \times t}$};
        \node (comm1) at ($(A) + 1.25*(r) + 0.65*(u)$) {$\circlearrowleft$};
        \draw[->>,thick] (A) -- (B);
        \draw[right hook->,thick] (D) -- (B);
        \draw[right hook->,thick] (B) -- (C);
        \draw[->,thick] (E) -- (D);
        \draw[->,thick,dashed] (E) --node[left,yshift=0.5em]{$u(\tau)$}  (A);
    \end{tikzpicture}
\end{center}
commutative. Thus, we get a biased weak pullback.
Conversely, let $R^{1 \times p}$ and $\pi$ define a biased weak pullback.
For a given $v \in \alpha^{-1}( \im( \gamma ) )$ there is a $w \in R^{1 \times c}$
such that we get a commutative diagram
\begin{center}
    \begin{tikzpicture}[label/.style={postaction={
      decorate,
      decoration={markings, mark=at position .5 with \node #1;}}}, baseline = (D),
      mylabel/.style={thick, draw=none, align=center, minimum width=0.5cm, minimum height=0.5cm,fill=white}]
        \coordinate (r) at (2.5,0);
        \coordinate (u) at (0,2);
        \node (A) {$R^{1 \times c}$};
        \node (B) at ($(A)+1*(r)$) {$R^{1 \times b}$};
        \node (C) at ($(B) +(u)$) {$R^{1 \times a}$};
        \node (D) at ($(A) + (u)$) {$R^{1 \times 1}$};
        \node (comm1) at ($(A) + 0.5*(r) + 0.5*(u)$) {$\circlearrowleft$};
        \draw[->,thick] (A) -- node[below]{$\gamma$} (B);
        \draw[->,thick] (C) -- node[right]{$\alpha$} (B);
        \draw[->,thick] (D) -- node[left]{$w$} (A);
        \draw[->,thick] (D) -- node[above]{$v$} (C);
    \end{tikzpicture}
\end{center}
where we identify the element $v$ (resp. $w$) with the map starting from $R^{1 \times 1}$
that sends $1$ to $v$ (resp. $w$).
Using the weak universal property, we get
\[
 v = u(v) \cdot \pi
\]
which means $v \in \im( \pi )$.
\end{proof}

Using Lemma \ref{lemma:biased_weak_pullbacks_rows} we can demonstrate
that a biased weak pullback can significantly differ from a weak pullback.
We provide a simple example:

\begin{example}\label{example:biased_weak_pullbacks_rows}
 By Lemma \ref{lemma:biased_weak_pullbacks_rows},
 the cospan $R^{1 \times a} \stackrel{0}{\longrightarrow} 0 \stackrel{0}{\longleftarrow} R^{1 \times c}$ in $\Rows_R$
 admits a biased weak pullback with projection $R^{1 \times a} \stackrel{\id}{\longrightarrow} R^{1 \times a}$.
 Assume there exists an $\omega: R^{1 \times a} \rightarrow R^{1 \times c}$ such that
 $\id_{R^{1 \times a}}$ and $\omega$ define the projections of a weak pullback.
 Then there has to exist a commutative diagram of the form
 \begin{center}
          \begin{tikzpicture}[label/.style={postaction={
            decorate,
            decoration={markings, mark=at position .5 with \node #1;}}}, baseline = (D),
            mylabel/.style={thick, draw=none, align=center, minimum width=0.5cm, minimum height=0.5cm,fill=white}]
              \coordinate (r) at (2.5,0);
              \coordinate (u) at (0,2);
              \node (A) {$R^{1 \times c}$};
              \node (B) at ($(A)+1*(r)$) {$0$};
              \node (C) at ($(B) +(u)$) {$R^{1 \times a}$};
              \node (D) at ($(A) + (u)$) {$R^{1 \times a}$};
              \node (T) at ($(D) + 1.1*(u) - 1.1*(2.5,0)$) {$R^{1 \times c} \oplus R^{1 \times a}$};
              \draw[->,thick] (A) -- (B);
              \draw[->,thick] (C) -- (B);
              \draw[->,thick] (D) -- node[left]{$\omega$} (A);
              \draw[->,thick] (D) -- node[above]{$\id$} (C);
              \draw (T) [->,thick,out=250,in=160] to node[below,xshift=-0.5em]{$\pmatcol{\id}{0}$} (A);
              \draw (T) [->,thick,out=20,in=110] to node[above]{$\pmatcol{0}{\id}$} (C);
              \draw[->,thick,dashed] (T) -- node[mylabel]{$\pmatcol{u_1}{u_2}$} (D);
          \end{tikzpicture}
        \end{center}
   which is absurd if $c > 0$, since commutativity of the upper triangle implies $u_1 = 0$, $u_2 = \id$,
   and commutativity of the lower triangle implies
   \[
    \id = u_1 \cdot \omega = 0.
   \]
   Note that $R^{1 \times c} \oplus R^{1 \times a}$ together with its projections
  to its factors is actually a (weak) pullback of the given cospan, 
  so, this example demonstrates that the computation of biased weak pullbacks instead of weak
pullbacks might result in a significant 
decrease in the number of needed generators (in this concrete example, we save $c$-many generators).
\end{example}

For computational reasons, whenever it suffices to work with biased weak pullbacks
instead of weak pullbacks, one should do so.

\subsection{Kernels}

We show how to construct
kernels in $\CIC( \PC )$ provided $\PC$ has biased weak pullbacks.

\begin{construction}\label{construction:kernels}
 Given a morphism
 \[\overline{\alpha}: (A \stackrel{\gamma_A}{\longrightarrow}\Omega_A \stackrel{\rho_A}{\longleftarrow} R_A) \longrightarrow (B \stackrel{\gamma_B}{\longrightarrow}\Omega_B \stackrel{\rho_B}{\longleftarrow} R_B)\]
 in $\CIC(\PC)$, the following diagram
 shows us how to construct its kernel embedding along with the universal property:
 \begin{center}
    \begin{tikzpicture}[label/.style={postaction={
          decorate,
          decoration={markings, mark=at position .5 with \node #1;}},
          mylabel/.style={thick, draw=black, align=center, minimum width=0.5cm, minimum height=0.5cm,fill=white}}]
          \coordinate (r) at (5.5,0);
          \coordinate (u) at (0,2);
          \node (A) {$(A \stackrel{\gamma_A}{\longrightarrow}\Omega_A \stackrel{\rho_A}{\longleftarrow} R_A)$};
          \node (B) at ($(A)+0.8*(r)$) {$(B \stackrel{\gamma_B}{\longrightarrow}\Omega_B \stackrel{\rho_B}{\longleftarrow} R_B)$.};
          \node (K) at ($(A) - 1*(r) + (u)$) {$(P(\alpha \cdot \gamma_B, \rho_B) \stackrel{\pi(\alpha \cdot \gamma_B, \rho_B) \cdot \gamma_A}{\longrightarrow} \Omega_A \stackrel{\rho_A}{\longleftarrow} R_A)$};
          \node (T) at ($(A) - 1*(r) - (u)$) {$(T \stackrel{\gamma_T}{\longrightarrow} \Omega_T \stackrel{\rho_T}{\longleftarrow} R_T)$};
          \draw[->,thick] (A) --node[above]{$\overline{\alpha}$} (B);
          \draw[->,thick] (T) --node[below,yshift=-0.2em]{$\overline{\tau}$} (A);
          \draw[->,thick] (K) --node[above,xshift=3.2em,yshift=-0.3em]{$\overline{\pi(\alpha \cdot \gamma_B, \rho_B)}$} (A);
          \draw[->,thick,dashed,label={[mylabel]{$\overline{u(\tau)}$}}] (T) -- (K);
    \end{tikzpicture}
 \end{center}
 How to read this diagram:
 the solid arrow pointing down right is the kernel embedding,
 the solid arrow pointing up right is a test morphism for the universal property of the kernel,
 and the dashed arrow pointing straight up is the morphism induced by the universal property.
 The biased weak pullback diagram needed in this construction looks as follows:
 \begin{center}
          \begin{tikzpicture}[label/.style={postaction={
            decorate,
            decoration={markings, mark=at position .5 with \node #1;}}}, baseline = (D),
            mylabel/.style={thick, draw=none, align=center, minimum width=0.5cm, minimum height=0.5cm,fill=white}]
              \coordinate (r) at (4.5,0);
              \coordinate (u) at (0,2);
              \node (A) {$R_B$};
              \node (B) at ($(A)+1*(r)$) {$\Omega_B$};
              \node (C) at ($(B) +(u)$) {$A$};
              \node (D) at ($(A) + (u)$) {$P(\alpha \cdot \gamma_B, \rho_B)$};
              \node (T) at ($(D) + (u) - (2.5,0)$) {$T$};
              \node (comm1) at ($(A) + 0.5*(r) + 0.5*(u)$) {$\circlearrowleft$};
              \node (comm2) at ($(T) + 1*(r) - 0.4*(u)$) {$\circlearrowleft$};
              \draw[->,thick] (A) -- node[below]{$\rho_B$} (B);
              \draw[->,thick] (C) -- node[right]{$\alpha \cdot \gamma_B$} (B);
              \draw[->,thick] (D) -- node[left]{$\omega$} (A);
              \draw[->,thick] (D) -- node[above]{$\pi( \alpha \cdot \gamma_B, \rho_B )$} (C);
              \draw (T) [->,thick,out=250,in=160] to node[below,xshift=-0.5em]{$\zeta$} (A);
              \draw (T) [->,thick,out=20,in=110] to node[above]{$\tau$} (C);
              \draw[->,thick,dashed] (T) -- node[mylabel]{$u(\tau)$} (D);
          \end{tikzpicture}
  \end{center}
  Note that $\zeta$ is simply a witness for the composition $\overline{\tau}\cdot \overline{\alpha}$ being zero.
\end{construction}
\begin{proof}[Correctness of the construction]
 To shorten notation we 
 denote the candidate for the kernel object by $\widehat{K}$.
 Any syzygy witness of a $\sigma \in \SyzC( \widehat{K} )$
 can also be used
 as a syzygy witness of
 $\sigma \cdot \pi(\alpha \cdot \gamma_B, \rho_B)$ in $\SyzC(A \stackrel{\gamma_A}{\longrightarrow}\Omega_A \stackrel{\rho_A}{\longleftarrow} R_A)$.
 Thus, the well-definedness property of the kernel embedding holds.
 
 Furthermore, we can take $\omega$ as a witness for the composition of the kernel embedding with $\overline{\alpha}$ being zero.
 Moreover, the kernel embedding is a mono
 by Lemma \ref{lemma:decide_mono}.
  
 Next,
 let $\sigma \in \SyzC(T \stackrel{\gamma_T}{\longrightarrow} \Omega_T \stackrel{\rho_T}{\longleftarrow} R_T)$.
 Then $\sigma \cdot \tau \in \SyzC(A \stackrel{\gamma_A}{\longrightarrow}\Omega_A \stackrel{\rho_A}{\longleftarrow} R_A)$,
 and since $\tau = u(\tau) \cdot \pi( \alpha \cdot \gamma_B, \rho_B )$,
 it follows that $\sigma \cdot u(\tau) \in \SyzC( \widehat{K} )$.
 Thus,
 the well-definedness property of the kernel induced morphism holds.
 
 Last, the commutativity of the triangle in the kernel diagram already holds in $\ACIC( \PC )$.
\end{proof}
Note that at no point in this proof did we need commutativity of the lower triangle in
the biased weak pullback diagram. This justifies our introduction of the
concept of a biased weak pullback.

\subsection{Proof of the characterization of the abelian case}\label{subsection:proof}

\begin{proof}[Proof of the equivalence of statements $(1)-(3)$ in Theorem \ref{theorem:abelian_case}] 
 \
 
 $(1) \implies (2)$:
 If $\PC$ has weak kernels,
 then it has biased weak pullbacks by Lemma \ref{lemma:biased_weak_pb_weak_kernels}.
 It follows from Construction \ref{construction:kernels} that $\CIC( \PC )$ has kernels.
 
 $(2) \implies (3)$:
 Construction \ref{construction:colift_along_epi} proves that
 every epimorphism is the cokernel projection of its kernel embedding
 in the case when $\CIC( \PC )$ has kernels, which is true by assumption.
 All the other axioms of an abelian category hold due to the constructions in Section \ref{section:subquotients}.
 
 $(3) \implies (1)$:
 Given a morphism $\alpha: A \rightarrow B$ in $\PC$, 
 compute the kernel embedding
 \[
  \overline{\kappa}: (K \rightarrow \Omega_K \leftarrow R_K) \longrightarrow \emb(A)
 \]
 of $\emb(\alpha)$ in $\CIC( \PC )$.
 Then $\kappa: K \rightarrow A$ is a weak kernel of $\alpha$.
\end{proof}

We say an additive functor $F: \PC^{\op} \rightarrow \Ab$
is \textbf{finitely generated} if it admits an epimorphism
\[
 (-,A) \twoheadrightarrow F
\]
in $\Modr\PC$ for some $A \in \PC$.
We will need the facts listed in the following lemma,
for which we will provide proofs for
the sake of completeness.
See Remark \ref{remark:working_with_functor} for a recall of working with functor categories.

\begin{lemma}\label{lemma:facts_on_fpmod}
 \
 \begin{enumerate}
  \item The inclusion $\fp( \PC^{\op}, \Ab ) \subseteq \Modr \PC$ respects cokernels, epis, and monos.
  \item Suppose given a short exact sequence
  \begin{center}
            \begin{tikzpicture}[label/.style={postaction={
              decorate,
              decoration={markings, mark=at position .5 with \node #1;}}},
              mylabel/.style={thick, draw=none, align=center, minimum width=0.5cm, minimum height=0.5cm,fill=white}]
                \coordinate (r) at (2.5,0);
                \coordinate (u) at (0,1.2);
                \node (A) {$0$};
                \node (B) at ($(A)+(r)$) {$F_1$};
                \node (C) at ($(B)+(r)$) {$F_2$};
                \node (D) at ($(C)+(r)$) {$F_3$};
                \node (E) at ($(D)+(r)$) {$0$};
                \draw[->,thick] (A) -- (B);
                \draw[->,thick] (B) -- (C);
                \draw[->,thick] (C) -- (D);
                \draw[->,thick] (D) -- (E);
            \end{tikzpicture}
  \end{center}
  in $\Modr\PC$. If $F_3$ is finitely presented and if $F_2$ is finitely generated, then
  $F_1$ is also finitely generated.
 \end{enumerate}
\end{lemma}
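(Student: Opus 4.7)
The plan is to treat the two parts independently. For part (1), the centerpiece is the closure of $\fp(\PC^{\op}, \Ab)$ under cokernels formed in $\Modr\PC$; once this is in hand, the statements about epimorphisms and monomorphisms will follow from standard characterizations together with the Yoneda lemma. For part (2), the strategy is a direct diagram chase using projectivity of the representables.

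For the closure under cokernels, given $f: F \to G$ in $\fp$, I would pick presentations $(-,B_F) \xrightarrow{\beta_F} (-,A_F) \xrightarrow{\alpha_F} F \to 0$ and $(-,B_G) \xrightarrow{\beta_G} (-,A_G) \xrightarrow{\alpha_G} G \to 0$, use projectivity of $(-,A_F)$ to lift $\alpha_F \cdot f$ along $\alpha_G$ to a morphism $\tilde f: (-,A_F) \to (-,A_G)$, and then observe that $\cokernel(f)$ (computed pointwise in $\Modr\PC$) equals $(-,A_G)/(\im(\beta_G) + \im(\tilde f))$, giving an explicit presentation $(-,A_F) \oplus (-,B_G) \xrightarrow{[\tilde f, \beta_G]} (-,A_G) \to \cokernel(f) \to 0$. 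Since this cokernel lies in $\fp$ and satisfies the universal property already there, the inclusion respects cokernels. For epis, I would use that a morphism is epi iff its cokernel vanishes, so the coincidence of cokernels transfers. For monos, I would use the Yoneda-style characterization: a morphism $f$ in $\Modr\PC$ is mono iff for every representable $(-,P)$ and every $\sigma: (-,P) \to F$ with $\sigma \cdot f = 0$, already $\sigma = 0$; since every $(-,P)$ itself lies in $\fp$, this test entirely happens inside $\fp$, and so monos agree.

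For part (2), fix a presentation $(-,B) \xrightarrow{\beta} (-,A) \xrightarrow{\alpha} F_3 \to 0$ and an epi $\pi_2: (-,X) \twoheadrightarrow F_2$. By projectivity of $(-,A)$ and the epi $q: F_2 \to F_3$, lift $\alpha$ to $\tilde\alpha: (-,A) \to F_2$ with $\tilde\alpha \cdot q = \alpha$. By projectivity of $(-,X)$, lift $\pi_2 \cdot q$ to $\xi: (-,X) \to (-,A)$ with $\xi \cdot \alpha = \pi_2 \cdot q$. Then $\tilde\alpha \cdot \beta$ and $\pi_2 - \xi \cdot \tilde\alpha$ both become zero after composition with $q$, so they factor as $\gamma: (-,B) \to F_1$ and $\phi: (-,X) \to F_1$ through the inclusion $i: F_1 \hookrightarrow F_2$. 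I would then verify pointwise at each $T \in \PC$ that the combined morphism $[\phi, \gamma]: (-,X) \oplus (-,B) \to F_1$ is surjective: given $f \in F_1(T)$, use surjectivity of $\pi_2(T)$ to obtain $x \in (T, X)$ with $\pi_2(T)(x) = i(f)$; the identity $\xi \cdot \alpha = \pi_2 \cdot q$ then forces $\xi(T)(x) \in \ker(\alpha(T)) = \im(\beta(T))$, giving a preimage $b \in (T,B)$; a short computation shows $[\phi,\gamma](T)(x,b) = f$.

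The main obstacle I expect is the bookkeeping in the pointwise surjectivity check for part (2): one must carefully thread the chosen lifts through the diagram and invoke exactness of the presentation of $F_3$ at exactly the right place. The step showing closure under cokernels in part (1) also requires a minor verification that the formula $\cokernel(f) = (-,A_G)/(\im(\beta_G) + \im(\tilde f))$ is correct, but this is routine given that colimits in $\Modr\PC$ are pointwise; the rest of part (1) is essentially formal.
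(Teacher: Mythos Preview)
Your proof is correct. Part~(1) is essentially identical to the paper's argument: same lift-and-present construction for cokernels, same ``epi iff cokernel vanishes'' reduction, and the same Yoneda-based test for monos via representables.

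For part~(2) you take a different but closely related route. The paper lifts the presentation of $F_3$ to a map of short exact sequences, applies the snake lemma to identify $\cokernel(\alpha) \simeq \cokernel(\beta)$, and then uses projectivity to build the epi from $(-,A \oplus B)$ onto $F_1$. You instead construct the two maps $\phi$ and $\gamma$ into $F_1$ by hand and verify surjectivity pointwise via an explicit element chase. Your argument is exactly what the snake-lemma step unfolds to when written out elementwise; it has the advantage of being self-contained (no appeal to the snake lemma in a functor category), while the paper's version is shorter and more conceptual. Both land on the same epi from a direct sum of two representables onto $F_1$.

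One small slip: in the paper's left-to-right composition convention, your ``$\tilde\alpha \cdot \beta$'' should read $\beta \cdot \tilde\alpha$ (you want $(-,B) \to (-,A) \to F_2$); the subsequent computation makes clear this is what you intend.
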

\begin{proof}
 \
 $(1)$:
 Let $F$, $G$ be finitely presented functors with
 presentations $(-,A) \rightarrow (-,A')$ and $(-,B) \rightarrow (-,B')$, respectively.
 A morphism $\nu: F \rightarrow G$ lifts to a morphism $(-,A') \rightarrow (-,B')$,
 since representable functors are projectives in $\Modr\PC$ by Remark \ref{remark:working_with_functor}.
 Computing pointwise, we see that the cokernel of $\nu$ in $\Modr\PC$ is given by the cokernel
 of $(-,A' \oplus B) \rightarrow (-,B')$, and thus, it is finitely presented.
 So, the inclusion respects cokernels and in particular epis.
 Furthermore, we have the following equivalences:
 \begin{align*}
  \nu \text{ is a mono in $\fp(\PC^{\op}, \Ab)$}
  &~\Longleftrightarrow~
  \forall \tau: T \rightarrow F \in \fp(\PC^{\op}, \Ab): (\tau \cdot \nu = 0) \Rightarrow (\tau = 0) \\
  &~\Longleftrightarrow~
  \forall A \in \PC: \forall x \in F(A): \big( (-,A) \stackrel{x}{\rightarrow} F \stackrel{\nu}{\rightarrow} G = 0 \big) \Rightarrow (x = 0) \\
  &~\Longleftrightarrow~
  \forall A \in \PC: \forall x \in F(A): (\nu(x) = 0) \Rightarrow (x = 0) \\
  &~\Longleftrightarrow~
  \nu \text{ is a mono in $\Modr\PC$},
 \end{align*}
 where we identify elements in $F(A)$ with their corresponding natural transformations due to the Yoneda lemma.

 $(2)$: The proof is the same as for modules over a ring, but now in the context of functors.
 Let $(-,A) \rightarrow (-,A')$ be a presentation of $F_3$.
 Then we get a commutative diagram with exact rows
 \begin{center}
            \begin{tikzpicture}[label/.style={postaction={
              decorate,
              decoration={markings, mark=at position .5 with \node #1;}}},
              mylabel/.style={thick, draw=none, align=center, minimum width=0.5cm, minimum height=0.5cm,fill=white}]
                \coordinate (r) at (3,0);
                \coordinate (u) at (0,2);
                \node (A) {};
                \node (B) at ($(A)+(r)$) {$(-,A)$};
                \node (C) at ($(B)+(r)$) {$(-,A')$};
                \node (D) at ($(C)+(r)$) {$F_3$};
                \node (E) at ($(D)+(r)$) {$0$};
                
                \node (A2) at ($(A)-(u)$) {$0$};
                \node (B2) at ($(A2)+(r)$) {$F_1$};
                \node (C2) at ($(B2)+(r)$) {$F_2$};
                \node (D2) at ($(C2)+(r)$) {$F_3$};
                \node (E2) at ($(D2)+(r)$) {$0$};
                
                \draw[->,thick] (B) -- (C);
                \draw[->,thick] (C) -- (D);
                \draw[->,thick] (D) -- (E);
                
                \draw[->,thick] (A2) -- (B2);
                \draw[->,thick] (B2) -- (C2);
                \draw[->,thick] (C2) -- (D2);
                \draw[->,thick] (D2) -- (E2);
                
                \draw[->,thick] (B) --node[left]{$\alpha$} (B2);
                \draw[->,thick] (C) --node[left]{$\beta$} (C2);
                \draw[->,thick] (D) --node[right]{$\id$} (D2);
            \end{tikzpicture}
  \end{center}
  by the projectivity of $(-,A')$ and the universal property of the kernel of $F_2 \rightarrow F_3$.
  The snake lemma implies
  \[
   \cokernel( \beta ) \simeq \cokernel( \alpha ).
  \]
  Since $F_2$ is finitely generated, it admits an epimorphism $(-,B) \twoheadrightarrow F_2$
  and so does $\cokernel( \beta ) \simeq \cokernel( \alpha )$.
  Now, from a projective lift
  \begin{center}
            \begin{tikzpicture}[label/.style={postaction={
              decorate,
              decoration={markings, mark=at position .5 with \node #1;}}},
              mylabel/.style={thick, draw=none, align=center, minimum width=0.5cm, minimum height=0.5cm,fill=white}]
                \coordinate (r) at (3,0);
                \coordinate (u) at (0,2);
                
                \node (B) {};
                \node (C) at ($(B)+(r)$) {};
                \node (D) at ($(C)+(r)$) {$(-,B)$};
                \node (E) at ($(D)+(r)$) {};
                
                \node (B2) at ($(B)-(u)$) {$(-,A)$};
                \node (C2) at ($(B2)+(r)$) {$F_1$};
                \node (D2) at ($(C2)+(r)$) {$\cokernel( \alpha )$};
                \node (E2) at ($(D2)+(r)$) {$0$};

                \draw[->,thick] (B2) --node[above]{$\alpha$} (C2);
                \draw[->,thick] (C2) -- (D2);
                \draw[->,thick] (D2) -- (E2);
                
                \draw[->>,thick] (D) -- (D2);
                \draw[->,thick,dashed] (D) --node[left,above]{$\lambda$} (C2);
            \end{tikzpicture}
  \end{center}
  we can finally construct our desired epimorphism $(-,A\oplus B ) \twoheadrightarrow F_1$.
\end{proof}

\begin{proof}[Proof of the equivalence of statements $(1),(4)-(7)$ in Theorem \ref{theorem:abelian_case}] 
 \
 
 $(1) \implies (4)$:
 If $\PC$ has weak kernels, then Freyd has shown that $\fp( \PC^{\op}, \Ab )$ is abelian (see \cite{PosFreyd} for a constructive proof).
 
 $(4) \implies (5)$:
 Since the inclusion $\fp( \PC^{\op}, \Ab ) \subseteq \Modr \PC$
 respects cokernels and epi-mono factorizations (in particular images) by Lemma \ref{lemma:facts_on_fpmod}, $\fp( \PC^{\op}, \Ab )$
 satisfies the characterization of Corollary \ref{corollary:characterization_as_cokernel_image_closure}.
 
 $(5) \implies (6)$: trivial.
 
 $(6) \implies (7)$: $\CIC( \PC )$ has epi-mono factorizations by Construction \ref{construction:epi_mono_factorization}.
 
 $(7) \implies (1)$:
 Given a morphism $\alpha: A \rightarrow B$ in $\PC$, 
 compute the epi-mono factorization
 \[
  (-,A) \twoheadrightarrow I \hookrightarrow (-,B)
 \]
 of $(-,\alpha)$ in $\fp(\PC^{\op}, \Ab)$. Since the embedding
 $\fp( \PC^{\op}, \Ab ) \subseteq \Modr \PC$ respects epis and monos by Lemma \ref{lemma:facts_on_fpmod},
 $I$ is the image of $(-,\alpha)$ considered as a morphism in $\Modr \PC$ and as such is given by
 \[
  I \simeq (-,A)/\kernel(-,\alpha) \in \fp( \PC^{\op}, \Ab ).
 \]
 In the short exact sequence in $\Modr \PC$
 \begin{center}
            \begin{tikzpicture}[label/.style={postaction={
              decorate,
              decoration={markings, mark=at position .5 with \node #1;}}},
              mylabel/.style={thick, draw=none, align=center, minimum width=0.5cm, minimum height=0.5cm,fill=white}]
                \coordinate (r) at (2.5,0);
                \coordinate (u) at (0,1.2);
                \node (A) {$0$};
                \node (B) at ($(A)+(r)$) {$\ker(-,\alpha)$};
                \node (C) at ($(B)+(r)$) {$(-,A)$};
                \node (D) at ($(C)+(r)$) {$I$};
                \node (E) at ($(D)+(r)$) {$0$};
                \draw[->,thick] (A) -- (B);
                \draw[->,thick] (B) -- (C);
                \draw[->,thick] (C) -- node[above]{$\epsilon_A$} (D);
                \draw[->,thick] (D) -- (E);
            \end{tikzpicture}
  \end{center}
  the object $I$ is finitely presented and $(-,A)$ is finitely generated. Thus, $\ker(-,\alpha)$
  is finitely generated by Lemma \ref{lemma:facts_on_fpmod} and we get an epimorphism
  \[
   (-,K) \twoheadrightarrow \ker(-,\alpha).
  \]
  Now, the composite
  \[
   (-,K) \twoheadrightarrow \ker(-,\alpha) \hookrightarrow (-,A)
  \]
  corresponds via the Yoneda lemma to a morphism
  \[
   K \rightarrow A
  \]
  in $\PC$.
  We claim that this morphism is a weak kernel projection of $\alpha$.
  Given a test morphism $\tau: T \rightarrow A$ in $\PC$ such that $\tau \cdot \alpha = 0$,
  we get a commutative diagram in $\Modr \PC$
  
  \begin{center}
            \begin{tikzpicture}[label/.style={postaction={
              decorate,
              decoration={markings, mark=at position .5 with \node #1;}}},
              mylabel/.style={thick, draw=none, align=center, minimum width=0.5cm, minimum height=0.5cm,fill=white}]
                \coordinate (r) at (2.5,0);
                \coordinate (u) at (0,1.2);
                \node (K) {$(-,K)$};
                \node (ker) at ($(K)+(r)-(u)$) {$\ker(-,\alpha)$};
                \node (A) at ($(ker)+(r)$) {$(-,A)$};
                \node (B) at ($(A)+1.5*(r)$) {$(-,B)$};
                \node (T) at ($(K)-2*(u)$) {$(-,T)$};
%                 \draw[->,thick] (K) -- (A);
                
                \draw[->>,thick] (K) -- (ker);
                \draw[->,thick] (A) --node[above]{$(-,\alpha)$} (B);
                \draw[->,thick] (T) -- (ker);
                \draw[->,thick,dashed] (T) -- (K);
%                 \draw[->,thick] (T) -- (A);
                \draw[->,thick] (ker) -- (A);
                
                \draw (K) [->,thick,out=0,in=155] to (A);
                \draw (T) [->,thick,out=0,in=-155] to (A);
            \end{tikzpicture}
  \end{center}
  since $(-,T)$ is projective. Now, by the Yoneda lemma, the dashed morphism arises from a uniquely determined morphism $T \rightarrow K$ in $\PC$.
\end{proof}

\section{Computational applications}\label{section:applications}

\subsection{A non-coherent ring with decidable syzygy inclusion}
Let $k$ be a
field. In this subsection, we study the ring
\[
 R := k[x_i, z \mid i \in \N]/ \langle x_iz \mid i \in \N \rangle.
\]
from a computational point of view.

\begin{remark}\label{remark:not_coherent}
 $R$ is not a coherent ring, since the kernel of the $R$-module homomorphism
 \[
  R \longrightarrow R: r \mapsto r \cdot \overline{z}
 \]
 is given by
 \[
  \langle \overline{x_i} \mid i \in \N \rangle_R,
 \]
 which cannot be finitely generated as an $R$-module.
\end{remark}

It follows that $\Rows_R$ does not have weak kernels\footnote{
  A weak kernel embedding of the morphism $R^{1 \times 1} \stackrel{\overline{z}}\rightarrow R^{1 \times 1}$ in $\Rows_R$
  would be a column $R^{1 \times m} \rightarrow R^{1 \times 1}$ whose $m \in \Nzero$ entries span the kernel of $R^{1 \times 1} \stackrel{\overline{z}}\rightarrow R^{1 \times 1}$
  in $R\Modl$
  which is impossible by Remark \ref{remark:not_coherent}.
}. 
From Theorem \ref{theorem:abelian_case}, we can conclude
that $\CIC( \Rows_R )$ is not abelian, and we cannot expect to compute kernels in this category.
However, the following theorem implies that we can nevertheless perform all the constructions listed in Section \ref{section:subquotients}
within $\CIC( \Rows_R )$.

\begin{theorem}\label{theorem:rows_R_decidable_syzygy_inclusion}
 If $k$ is a computable field, then
 the category $\Rows_R$ has decidable syzygy inclusion.
\end{theorem}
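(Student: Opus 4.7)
The plan is to exploit a direct sum decomposition of $R$ as a module over a suitable Noetherian subring, reducing the syzygy inclusion problem in $\Rows_R$ to two syzygy inclusion problems over computable Noetherian rings, where classical Gröbner basis methods apply.

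Since the input to the syzygy inclusion algorithm consists of finitely many matrices with finitely many entries, I can fix $n \in \N$ such that all of $\gamma, \rho, \gamma', \rho'$ have entries in the finitely presented subring
\[
  R_n := k[x_1, \ldots, x_n, z]/\langle x_1 z, \ldots, x_n z \rangle.
\]
Being finitely presented over the computable field $k$, the ring $R_n$ is Noetherian and computable in the sense of \cite{BL}, and its quotient $\overline{R}_n := R_n/(z) \cong k[x_1, \ldots, x_n]$ is likewise a computable Noetherian ring. I would then establish that $R = R_n \oplus U$ as $R_n$-modules, where $U$ is the $k$-span of the monomials $x^\alpha$ with $\alpha_i > 0$ for some $i > n$. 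The relations $x_i z = 0$ show that $U$ is an $R_n$-submodule on which $z$ acts as zero, making $U$ a free $\overline{R}_n$-module with basis the nontrivial monomials in $\{x_j : j > n\}$. Since $\gamma, \rho, \gamma', \rho'$ have entries in $R_n$, right multiplication by these matrices respects the decomposition $R^{1 \times *} = R_n^{1 \times *} \oplus U^{1 \times *}$ and acts on the $U$-summand through the residues $\gamma \bmod z$ and $\rho \bmod z$ in $\overline{R}_n$.

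The key consequence is that the syzygy module $K_1 \subseteq R^{1 \times a}$ of the first cospan decomposes as
\[
  K_1 \;=\; K_1^{R_n} \;\oplus\; \bigoplus_{\beta \ne 0} K_1^{\overline{R}_n} \cdot x^\beta,
\]
where $K_1^{R_n}$ is the syzygy module of $(\gamma, \rho)$ over $R_n$, $K_1^{\overline{R}_n}$ is the syzygy module of $(\gamma \bmod z, \rho \bmod z)$ over $\overline{R}_n$, and $\beta$ ranges over the nontrivial monomials in $\{x_j : j > n\}$; the analogous decomposition holds for $K_2$. Hence $K_1 \subseteq K_2$ inside $R^{1 \times a}$ if and only if both $K_1^{R_n} \subseteq K_2^{R_n}$ over $R_n$ and $K_1^{\overline{R}_n} \subseteq K_2^{\overline{R}_n}$ over $\overline{R}_n$. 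Both inclusions are decidable by the criterion recalled in Example \ref{example:syzygy_inclusion_for_rings}, since $R_n$ and $\overline{R}_n$ are computable rings. The constructive syzygy-witness transformation required by Definition \ref{definition:syzygy_inclusion} is then obtained by splitting a given pair $(\sigma, \omega)$ along the decomposition $R = R_n \oplus U$, applying the witness algorithms of the two computable rings to the two parts, and recombining.

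The main obstacle is verifying the decomposition of $K_1$ claimed above: one must show that a pair $\sigma = \sigma_0 + \sigma_U$ admits a witness $\omega \in R^{1 \times c}$ if and only if $\sigma_0$ admits a witness in $R_n^{1 \times c}$ and each monomial component $\sigma_{U,\beta} \in \overline{R}_n^{1 \times a}$ of $\sigma_U$ admits a witness in $\overline{R}_n^{1 \times c}$. This hinges on the fact that $U$ is an $R_n$-submodule annihilated by $z$, so that the single equation $\sigma \gamma = \omega \rho$ in $R^{1 \times b}$ decouples into an equation in $R_n^{1 \times b}$ and, monomial by monomial in $\beta$, a family of equations in $\overline{R}_n^{1 \times b}$.
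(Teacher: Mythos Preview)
Your approach is correct and genuinely different from the paper's. Both arguments pivot on choosing $n$ so that all input matrices lie in $R_n$, but they diverge from there.

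The paper first reduces (via a separate lemma) to the special case where the first cospan has $C=0$, so that the first syzygy category is governed by $\kernel_R(\gamma)$. It then proves a decomposition $\kernel_R(\gamma)=\kernel_{R_n}(\gamma)\oplus\bigl(\langle x_i\mid i>n\rangle_R\cdot\kernel_{k[x]}(\gamma_x)\bigr)$, using the \emph{infinite}-variable ring $k[x]$, and invokes an automorphism of $R$ swapping $x_{n+1}$ with $x_i$ (for $i>n+1$) to show that among the infinitely many potential generators $x_i\cdot\tau_j$ it suffices to test only $x_{n+1}\cdot\tau_j$. Membership of these finitely many elements in the second syzygy category is then decided by a separate ``decidable lifts in $\Rows_R$'' argument (again reducing to $R_n$).

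Your route is more uniform: you decompose $R=R_n\oplus U$ as $R_n$-modules, observe that $U$ is free over the \emph{finite}-variable ring $\overline{R}_n=k[x_1,\dots,x_n]$, and decompose \emph{both} syzygy modules $K_1$ and $K_2$ along this splitting. The inclusion $K_1\subseteq K_2$ then becomes the conjunction of two syzygy-inclusion problems over the computable coherent rings $R_n$ and $\overline{R}_n$, with no preliminary reduction to $C=0$, no automorphism trick, and no appeal to $k[x]$. This is cleaner and stays entirely within the standard framework of Example~\ref{example:syzygy_inclusion_for_rings}. The paper's version, by contrast, makes the ``finitely many checks'' structure explicit and isolates a symmetry argument that might transport to other rings built from infinitely many variables with a large automorphism group.
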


For the proof, we proceed in three steps.

\begin{enumerate}
 \item We give a simplification of the syzygy inclusion problem for an arbitrary additive category $\PC$ (Corollary \ref{corollary:simplified_syzygy_inclusion_problem}).
 \item We give an explicit description of the row syzygies for matrices over $R$ (Lemma \ref{lemma:decomposition_of_kernel_for_R}).
 \item We solve the simplified syzygy inclusion problem for $\Rows_R$ (Subsubsection \ref{subsubsection:proving_syz_incl_for_R}).
\end{enumerate}

\subsubsection{Simplifying the syzygy inclusion problem}

\begin{lemma}
 Let $\PC$ be an additive category.
 Let
 \begin{center}
    \begin{tikzpicture}[label/.style={postaction={
      decorate,
      decoration={markings, mark=at position .5 with \node #1;}}},
      mylabel/.style={thick, draw=none, align=center, minimum width=0.5cm, minimum height=0.5cm,fill=white}]
        \coordinate (r) at (2.5,0);
        \coordinate (u) at (0,-0.5);
        \node (A) {$A$};
        \node (B) at ($(A)+(r) - (u)$) {$B$};
        \node (C) at ($(B) + (r)$) {$C$};
        \node (Bp) at ($(A) +(r) + (u)$) {$B'$};
        \node (Cp) at ($(Bp) +(r) $) {$C'$};
        \draw[->,thick] (A) --node[above]{$\gamma$} (B);
        \draw[->,thick] (C) --node[above]{$\rho$} (B);
        \draw[->,thick] (A) --node[below]{$\gamma'$} (Bp);
        \draw[->,thick] (Cp) --node[below]{$\rho'$} (Bp);
    \end{tikzpicture}
 \end{center} 
 be a pair of cospans in $\PC$ with the same first object.
 Then
 \[
  \SyzC\big( A \stackrel{\gamma}{\longrightarrow} B \stackrel{\rho}{\longleftarrow} C \big)
  \subseteq
  \SyzC\big( A \stackrel{\gamma'}{\longrightarrow} B' \stackrel{\rho'}{\longleftarrow} C' \big)
 \]
 if and only if
 \[
  \SyzC\big( A \oplus C \stackrel{\pmatcol{\gamma}{\rho}}{\longrightarrow} B \longleftarrow 0 \big)
  \subseteq
  \SyzC\big( A \oplus C \stackrel{\pmatcol{\gamma'}{0}}{\longrightarrow} B' \stackrel{\rho'}{\longleftarrow} C' \big).
 \]
\end{lemma}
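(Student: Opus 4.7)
The approach is to unfold the definitions of both sides and observe that they encode exactly the same data, just repackaged. The key computation is that the cospan with zero third object $(A \oplus C \stackrel{\pmatcol{\gamma}{\rho}}{\longrightarrow} B \longleftarrow 0)$ forces every witness to vanish, so its syzygies $\tau = \pmatrow{\sigma}{-\omega}: S \rightarrow A \oplus C$ are exactly the pairs satisfying $\sigma \cdot \gamma - \omega \cdot \rho = 0$. This is a bijective encoding of (syzygy, syzygy witness) pairs in $\SyzC(A \stackrel{\gamma}{\rightarrow} B \stackrel{\rho}{\leftarrow} C)$, with $\sigma$ the syzygy and $\omega$ its witness.

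Next I would observe the companion fact on the right-hand cospans. A morphism $\tau = \pmatrow{\sigma}{\eta}: S \rightarrow A \oplus C$ is a syzygy of $(A \oplus C \stackrel{\pmatcol{\gamma'}{0}}{\rightarrow} B' \stackrel{\rho'}{\leftarrow} C')$ with witness $\omega': S \rightarrow C'$ if and only if $\sigma \cdot \gamma' + \eta \cdot 0 = \omega' \cdot \rho'$, i.e., $\sigma \cdot \gamma' = \omega' \cdot \rho'$. Thus the second component $\eta$ is irrelevant, and $\tau$ is a syzygy of the primed cospan precisely when its first component $\sigma$ is a syzygy of $(A \stackrel{\gamma'}{\rightarrow} B' \stackrel{\rho'}{\leftarrow} C')$, in which case any witness $\omega'$ for $\sigma$ is also a witness for $\tau$.

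With these two observations, both implications become immediate bookkeeping. For ($\Rightarrow$), given a syzygy $\tau = \pmatrow{\sigma}{-\omega}$ of the left cospan, the first observation yields that $\sigma$ is a syzygy of $(A \stackrel{\gamma}{\rightarrow} B \stackrel{\rho}{\leftarrow} C)$ with witness $\omega$; by hypothesis $\sigma$ is then a syzygy of the primed cospan, and any such witness $\omega'$ lifts to a witness for $\tau$ by the second observation. For ($\Leftarrow$), given a syzygy $\sigma$ of $(A \stackrel{\gamma}{\rightarrow} B \stackrel{\rho}{\leftarrow} C)$ with witness $\omega$, form $\tau := \pmatrow{\sigma}{-\omega}$; it is a syzygy of the left cospan by the first observation, hence by hypothesis also of the right cospan, whose witness provides the required $\omega'$ proving $\sigma \in \SyzC(A \stackrel{\gamma'}{\rightarrow} B' \stackrel{\rho'}{\leftarrow} C')$.

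There is essentially no obstacle; the only care needed is getting the sign/matrix conventions right under the row convention declared in the paper's conventions section, so that the equation $\pmatrow{\sigma}{-\omega} \cdot \pmatcol{\gamma}{\rho} = \sigma \cdot \gamma - \omega \cdot \rho$ is applied correctly. The content of the lemma is exactly that a syzygy together with its witness can be absorbed into a single ``extended syzygy'' whose witness is trivial, reducing the general inclusion problem to one where the left cospan has a zero third object.
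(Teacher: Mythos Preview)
Your proof is correct and follows essentially the same approach as the paper's own proof: both directions hinge on the identification of a syzygy $\pmatrow{\sigma_A}{\sigma_C}$ of $(A\oplus C \to B \leftarrow 0)$ with a syzygy $\sigma_A$ of $(A\to B\leftarrow C)$ having witness $-\sigma_C$, together with the observation that membership in $\SyzC(A\oplus C \stackrel{\pmatcol{\gamma'}{0}}{\to} B' \stackrel{\rho'}{\leftarrow} C')$ depends only on the first component. The paper carries out the same diagram chase inline rather than isolating your two observations first, but the content is identical.
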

\begin{proof}
 ``$\Longrightarrow$'':
 Given a syzygy 
 \begin{center}
          \begin{tikzpicture}[label/.style={postaction={
            decorate,
            decoration={markings, mark=at position .5 with \node #1;}}}, baseline = (D),
            mylabel/.style={thick, draw=none, align=center, minimum width=0.5cm, minimum height=0.5cm,fill=white}]
              \coordinate (r) at (2.5,0);
              \coordinate (u) at (0,1.2);
              \node (A) {$A \oplus C$};
              \node (B) at ($(A)+1*(r)$) {$B$};
              \node (C) at ($(B) +(r)$) {$0$,};
              \node (D) at ($(A) + (u)$) {$S$};
              \draw[->,thick] (A) -- node[below]{$\pmatcol{\gamma}{\rho}$} (B);
              \draw[->,thick] (C) -- node[below]{} (B);
              \draw[->,thick] (D) -- node[left]{$\pmatrow{\sigma_A}{\sigma_C}$} (A);
              \draw[->,thick,dashed] (D) -- node[above,yshift=0.3em]{} (C);
          \end{tikzpicture}
 \end{center}
 we can construct another one:
 \begin{center}
          \begin{tikzpicture}[label/.style={postaction={
            decorate,
            decoration={markings, mark=at position .5 with \node #1;}}}, baseline = (D),
            mylabel/.style={thick, draw=none, align=center, minimum width=0.5cm, minimum height=0.5cm,fill=white}]
              \coordinate (r) at (2.5,0);
              \coordinate (u) at (0,1.2);
              \node (A) {$A$};
              \node (B) at ($(A)+1*(r)$) {$B$};
              \node (C) at ($(B) +(r)$) {$C$.};
              \node (D) at ($(A) + (u)$) {$S$};
              \draw[->,thick] (A) -- node[below]{${\gamma}$} (B);
              \draw[->,thick] (C) -- node[below]{$\rho$} (B);
              \draw[->,thick] (D) -- node[left]{${\sigma_A}$} (A);
              \draw[->,thick,dashed] (D) -- node[above,yshift=0.3em]{$-\sigma_C$} (C);
          \end{tikzpicture}
 \end{center}   
 By assumption, this gives us the syzygy witness $\omega$ in the diagram
 \begin{center}
          \begin{tikzpicture}[label/.style={postaction={
            decorate,
            decoration={markings, mark=at position .5 with \node #1;}}}, baseline = (D),
            mylabel/.style={thick, draw=none, align=center, minimum width=0.5cm, minimum height=0.5cm,fill=white}]
              \coordinate (r) at (2.5,0);
              \coordinate (u) at (0,1.2);
              \node (A) {$A$};
              \node (B) at ($(A)+1*(r)$) {$B'$};
              \node (C) at ($(B) +(r)$) {$C'$,};
              \node (D) at ($(A) + (u)$) {$S$};
              \draw[->,thick] (A) -- node[below]{${\gamma'}$} (B);
              \draw[->,thick] (C) -- node[below]{$\rho'$} (B);
              \draw[->,thick] (D) -- node[left]{${\sigma_A}$} (A);
              \draw[->,thick,dashed] (D) -- node[above,yshift=0.3em]{$\omega$} (C);
          \end{tikzpicture}
 \end{center}
 which finally yields the desired syzygy
 \begin{center}
          \begin{tikzpicture}[label/.style={postaction={
            decorate,
            decoration={markings, mark=at position .5 with \node #1;}}}, baseline = (D),
            mylabel/.style={thick, draw=none, align=center, minimum width=0.5cm, minimum height=0.5cm,fill=white}]
              \coordinate (r) at (2.5,0);
              \coordinate (u) at (0,1.2);
              \node (A) {$A \oplus C$};
              \node (B) at ($(A)+1*(r)$) {$B'$};
              \node (C) at ($(B) +(r)$) {$C'$.};
              \node (D) at ($(A) + (u)$) {$S$};
              \draw[->,thick] (A) -- node[below]{$\pmatcol{\gamma'}{0}$} (B);
              \draw[->,thick] (C) -- node[below]{$\rho'$} (B);
              \draw[->,thick] (D) -- node[left]{$\pmatrow{\sigma_A}{\sigma_C}$} (A);
              \draw[->,thick,dashed] (D) -- node[above,yshift=0.3em]{$\omega$} (C);
          \end{tikzpicture}
 \end{center}
 
 ``$\Longleftarrow$'':
 Given a syzygy 
 \begin{center}
          \begin{tikzpicture}[label/.style={postaction={
            decorate,
            decoration={markings, mark=at position .5 with \node #1;}}}, baseline = (D),
            mylabel/.style={thick, draw=none, align=center, minimum width=0.5cm, minimum height=0.5cm,fill=white}]
              \coordinate (r) at (2.5,0);
              \coordinate (u) at (0,1.2);
              \node (A) {$A$};
              \node (B) at ($(A)+1*(r)$) {$B$};
              \node (C) at ($(B) +(r)$) {$C$};
              \node (D) at ($(A) + (u)$) {$S$};
              \draw[->,thick] (A) -- node[below]{${\gamma}$} (B);
              \draw[->,thick] (C) -- node[below]{$\rho$} (B);
              \draw[->,thick] (D) -- node[left]{${\sigma}$} (A);
              \draw[->,thick,dashed] (D) -- node[above,yshift=0.3em]{$\omega$} (C);
          \end{tikzpicture}
 \end{center}   
 we can construct another one:
 \begin{center}
          \begin{tikzpicture}[label/.style={postaction={
            decorate,
            decoration={markings, mark=at position .5 with \node #1;}}}, baseline = (D),
            mylabel/.style={thick, draw=none, align=center, minimum width=0.5cm, minimum height=0.5cm,fill=white}]
              \coordinate (r) at (2.5,0);
              \coordinate (u) at (0,1.2);
              \node (A) {$A \oplus C$};
              \node (B) at ($(A)+1*(r)$) {$B$};
              \node (C) at ($(B) +(r)$) {$0$.};
              \node (D) at ($(A) + (u)$) {$S$};
              \draw[->,thick] (A) -- node[below]{$\pmatcol{\gamma}{\rho}$} (B);
              \draw[->,thick] (C) -- node[below]{} (B);
              \draw[->,thick] (D) -- node[left]{$\pmatrow{\sigma}{-\omega}$} (A);
              \draw[->,thick,dashed] (D) -- node[above,yshift=0.3em]{} (C);
          \end{tikzpicture}
 \end{center}
 By assumption, we get the syzygy witness $\omega'$ in the diagram
 \begin{center}
          \begin{tikzpicture}[label/.style={postaction={
            decorate,
            decoration={markings, mark=at position .5 with \node #1;}}}, baseline = (D),
            mylabel/.style={thick, draw=none, align=center, minimum width=0.5cm, minimum height=0.5cm,fill=white}]
              \coordinate (r) at (2.5,0);
              \coordinate (u) at (0,1.2);
              \node (A) {$A \oplus C$};
              \node (B) at ($(A)+1*(r)$) {$B'$};
              \node (C) at ($(B) +(r)$) {$C'$};
              \node (D) at ($(A) + (u)$) {$S$};
              \draw[->,thick] (A) -- node[below]{$\pmatcol{\gamma'}{0}$} (B);
              \draw[->,thick] (C) -- node[below]{$\rho'$} (B);
              \draw[->,thick] (D) -- node[left]{$\pmatrow{\sigma}{-\omega}$} (A);
              \draw[->,thick,dashed] (D) -- node[above,yshift=0.3em]{$\omega'$} (C);
          \end{tikzpicture}
 \end{center}
 and obtain the desired syzygy
 \begin{center}
          \begin{tikzpicture}[label/.style={postaction={
            decorate,
            decoration={markings, mark=at position .5 with \node #1;}}}, baseline = (D),
            mylabel/.style={thick, draw=none, align=center, minimum width=0.5cm, minimum height=0.5cm,fill=white}]
              \coordinate (r) at (2.5,0);
              \coordinate (u) at (0,1.2);
              \node (A) {$A$};
              \node (B) at ($(A)+1*(r)$) {$B'$};
              \node (C) at ($(B) +(r)$) {$C'$.};
              \node (D) at ($(A) + (u)$) {$S$};
              \draw[->,thick] (A) -- node[below]{${\gamma'}$} (B);
              \draw[->,thick] (C) -- node[below]{$\rho'$} (B);
              \draw[->,thick] (D) -- node[left]{${\sigma}$} (A);
              \draw[->,thick,dashed] (D) -- node[above,yshift=0.3em]{$\omega'$} (C);
          \end{tikzpicture}
 \end{center}
\end{proof}

\begin{corollary}[Simplifying the syzygy inclusion problem]\label{corollary:simplified_syzygy_inclusion_problem}
 Let $\PC$ be an additive category.
 Then $\PC$ has decidable syzygy inclusion if and only if
 we can create a solution of the syzygy inclusion problem 
 for all pairs of cospans of the special form
 \begin{center}
    \begin{tikzpicture}[label/.style={postaction={
      decorate,
      decoration={markings, mark=at position .5 with \node #1;}}},
      mylabel/.style={thick, draw=none, align=center, minimum width=0.5cm, minimum height=0.5cm,fill=white}]
        \coordinate (r) at (2.5,0);
        \coordinate (u) at (0,-0.5);
        \node (A) {$A$};
        \node (B) at ($(A)+(r) - (u)$) {$B$};
        \node (C) at ($(B) + (r)$) {$0$};
        \node (Bp) at ($(A) +(r) + (u)$) {$B'$};
        \node (Cp) at ($(Bp) +(r) $) {$C'$.};
        \draw[->,thick] (A) --node[above]{$\gamma$} (B);
        \draw[->,thick] (C) -- (B);
        \draw[->,thick] (A) --node[below]{$\gamma'$} (Bp);
        \draw[->,thick] (Cp) --node[below]{$\rho'$} (Bp);
    \end{tikzpicture}
 \end{center} 
\end{corollary}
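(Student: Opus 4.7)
The plan is to derive this corollary directly from the preceding lemma, which already provides the key equivalence
\[
\SyzC\big( A \stackrel{\gamma}{\longrightarrow} B \stackrel{\rho}{\longleftarrow} C \big)
\subseteq
\SyzC\big( A \stackrel{\gamma'}{\longrightarrow} B' \stackrel{\rho'}{\longleftarrow} C' \big)
\iff
\SyzC\big( A \oplus C \stackrel{\pmatcol{\gamma}{\rho}}{\longrightarrow} B \longleftarrow 0 \big)
\subseteq
\SyzC\big( A \oplus C \stackrel{\pmatcol{\gamma'}{0}}{\longrightarrow} B' \stackrel{\rho'}{\longleftarrow} C' \big).
\]

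First I would handle the ``only if'' direction, which is immediate: cospans of the indicated special shape are a particular instance of arbitrary cospans, so any algorithm for the general syzygy inclusion problem restricts to one for the special form.

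For the ``if'' direction, given an arbitrary pair of cospans $A \stackrel{\gamma}{\to} B \stackrel{\rho}{\leftarrow} C$ and $A \stackrel{\gamma'}{\to} B' \stackrel{\rho'}{\leftarrow} C'$, I would apply the preceding lemma to reduce the question to deciding whether
\[
\SyzC\big( A \oplus C \stackrel{\pmatcol{\gamma}{\rho}}{\longrightarrow} B \longleftarrow 0 \big)
\subseteq
\SyzC\big( A \oplus C \stackrel{\pmatcol{\gamma'}{0}}{\longrightarrow} B' \stackrel{\rho'}{\longleftarrow} C' \big),
\]
which is a syzygy inclusion problem of the special form (with first object $A \oplus C$). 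By hypothesis we have an algorithm for this, so we get a yes/no answer.

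The remaining step is to convert the constructive witness-producing part. Given a syzygy $(S \stackrel{\sigma}{\to} A)$ in the original left-hand category together with a syzygy witness $\omega: S \to C$, the proof of the preceding lemma constructs the syzygy $\pmatrow{\sigma}{-\omega}: S \to A \oplus C$ in the transformed left-hand category (with trivial witness into $0$). Feeding this into the assumed algorithm yields a witness $\omega': S \to C'$ for it as a syzygy in the transformed right-hand category; but then the same $\omega'$ is, by the same lemma, a syzygy witness for $\sigma$ in the original right-hand cospan. Thus we can promote the algorithm for the special case to one for the general case, completing the proof. There is no real obstacle here --- the entire content is packaged in the preceding lemma; the corollary is simply the algorithmic reading of it.
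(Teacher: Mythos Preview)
Your proposal is correct and is exactly the argument the paper intends: the corollary is stated without its own proof and is meant to follow immediately from the preceding lemma by the reduction you describe, including the constructive passage of witnesses via $\pmatrow{\sigma}{-\omega}$.
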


\subsubsection{Describing row syzygies of matrices over $R$}
We define several computable subrings of $R = k[x_i, z \mid i \in \N]/ \langle x_iz \mid i \in \N \rangle$ that help us in computing row syzygies.
We set
\[
 R_n := k[x_1, \dots, x_n, z]/ \langle x_1z, \dots, x_nz \rangle
\]
for $n \in \N$
which identifies both as a subring and as a quotient ring of $R$.
Moreover, we will regard the polynomial rings
$
k[{x}] := k[x_i \mid i \in \N]
$
and
$
k[z]
$
as subrings of $R$.

\begin{remark}\label{remark:computable_field}
 If $k$ is a computable field, then 
 all the rings $R_n$, $n \in \N$, $k[x]$, and $k[z]$ are also computable.
 For quotients of polynomial rings in finitely many variables like
 $R_n$ and $k[z]$, this follows from Gröbner bases techniques (see, e.g., \cite{GP}).
 For the polynomial ring in infinitely many variables $k[x]$,
 note that $k[x]$ is a free $k[x_1, \dots, x_m]$ module for every $m \in \N$.
 In particular, the inclusion $k[x_1, \dots, x_m] \hookrightarrow k[x]$ is flat,
 which implies that we may compute the row syzygies of a given matrix over $k[x]$
 by computing the row syzygies of the same matrix considered over $k[x_1, \dots, x_m]$
 for sufficiently large $m$.
\end{remark}

\begin{remark}
 We can decompose $R$ at the level of $k$-vector spaces as
 \[
  R = k[x] \oplus \big(z \cdot k[z]\big).
 \]
 For $p \in R$, we write $p = p_x + p_z$ for the corresponding decomposition of the element, i.e.,
 $p_x \in k[x]$ and $p_z \in z \cdot k[z]$.
\end{remark}

For any ring $S$ and any matrix $M \in S^{a \times b}$, $a, b \in \Nzero$,
we write
\[
 \kernel_S( M ) := \{ v \in S^{1 \times a} \mid v \cdot M = 0\}
\]
for the row kernel of $M$.

The next lemma reduces the problem of finding infinitely many generators
for the row syzygies of matrices over $R$ to finding finitely many generators of
row syzygies for matrices over the coherent subrings $R_n$ and $k[x]$.

\begin{lemma}\label{lemma:decomposition_of_kernel_for_R}
 Given a matrix $\left( p^{ij} \right)_{ij} \in R^{a \times b}$ for $a,b \in \Nzero$,
 then we can describe its row kernel as follows:
 \[
  \kernel_R\left( p^{ij} \right)_{ij} =
  \kernel_{R_n}\left( p^{ij} \right)_{ij} \oplus
  \left( \langle x_i \mid i > n \rangle_R \cdot \kernel_{k[x]}\left( p_x^{ij} \right)_{ij} \right)
 \]
 where $n \in \Nzero$ is chosen such that $p^{ij} \in R_n$ for all entries of the given matrix.
\end{lemma}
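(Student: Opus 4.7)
The plan is to exploit the $k$-vector space decomposition $R = k[x] \oplus (z \cdot k[z])$, refined as $R = k[x_1,\dots,x_n] \oplus \langle x_i \mid i > n\rangle_{k[x]} \oplus (z \cdot k[z])$ using that $k[x]$ is free over $k[x_1,\dots,x_n]$ on the monomials in $\{x_i \mid i > n\}$. Under this three-term decomposition the subring $R_n \subseteq R$ corresponds exactly to the first and third summands. A preliminary structural observation I would record is that since $x_i z = 0$ for every $i$, multiplication by any $x_i$ annihilates all of $z \cdot k[z]$; in particular $\langle x_i \mid i > n\rangle_R = \langle x_i \mid i > n\rangle_{k[x]}$ as subsets of $R$, so the second summand on the right-hand side of the claimed equality is actually contained in $\langle x_i \mid i > n\rangle_{k[x]}^{1 \times a} \subseteq k[x]^{1 \times a}$.

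Given these preliminaries, the inclusion $\kernel_{R_n}(p^{ij}) \subseteq \kernel_R(p^{ij})$ is automatic, and for an element of the form $r \cdot w$ with $r \in \langle x_i \mid i > n\rangle_{k[x]}$ and $w \cdot p_x^{ij} = 0$, a direct computation gives $r \cdot w \cdot p^{ij} = r \cdot (w \cdot p_z^{ij})$, which vanishes because $w \cdot p_z^{ij}$ has entries in $z \cdot k[z]$ and $r$ kills everything in $z \cdot k[z]$. Directness of the sum follows from the triviality of $k[x_1,\dots,x_n] \cap \langle x_i \mid i > n\rangle_{k[x]}$: the first summand lives in $(k[x_1,\dots,x_n] \oplus z\cdot k[z])^{1\times a}$ and the second in $\langle x_i \mid i > n\rangle_{k[x]}^{1\times a}$.

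The main work is showing that every $v \in \kernel_R(p^{ij})$ admits such a splitting. I would write $v = v^x_{\leq n} + v^x_{>n} + v^z$ according to the three-term decomposition, expand $v \cdot p^{ij} = 0$, and use $x_i z = 0$ to collect terms; every contribution lands either in $k[x]^{1\times b}$ or in $(z\cdot k[z])^{1 \times b}$, and separating the two yields (a) $v^x \cdot p_x^{ij} = 0$ in $k[x]$ and (b) $v^x(0) \cdot p_z^{ij} + v^z \cdot p_x^{ij}(0) + v^z \cdot p_z^{ij} = 0$ in $z \cdot k[z]$. Splitting (a) along $k[x] = k[x_1,\dots,x_n] \oplus \langle x_i \mid i > n\rangle_{k[x]}$ (using that $p_x^{ij} \in k[x_1,\dots,x_n]$) gives $v^x_{\leq n} \cdot p_x^{ij} = 0$ and $v^x_{>n} \cdot p_x^{ij} = 0$ separately. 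Expanding $v^x_{>n}$ in the $k[x_1,\dots,x_n]$-basis of monomials in $\{x_i \mid i > n\}$ and applying this second vanishing to each coefficient places $v^x_{>n}$ into $\langle x_i \mid i > n\rangle_{k[x]} \cdot \kernel_{k[x_1,\dots,x_n]}(p_x^{ij}) \subseteq \langle x_i \mid i > n\rangle_R \cdot \kernel_{k[x]}(p_x^{ij})$. Finally, since $v^x(0) = v^x_{\leq n}(0)$, equation (b) is exactly the $z\cdot k[z]$-part of $(v^x_{\leq n} + v^z)\cdot p^{ij}$ in $R_n$, which together with $v^x_{\leq n} \cdot p_x^{ij} = 0$ gives $v^x_{\leq n} + v^z \in \kernel_{R_n}(p^{ij})$. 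The step I expect to require the most care is this last bookkeeping: keeping the $k[x]$-part and the $z\cdot k[z]$-part cleanly separated throughout the expansion of $v \cdot p^{ij}$, and verifying that the freeness argument genuinely places $v^x_{>n}$ into the second summand rather than merely into $\kernel_{k[x]}(p_x^{ij})$.
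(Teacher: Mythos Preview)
Your proof is correct and follows essentially the same strategy as the paper: decompose $R$ as a $k$-vector space compatibly with $R_n$ and the ideal $\langle x_i \mid i > n\rangle$, write an arbitrary kernel element along this decomposition, and use that $p^{ij} \in R_n$ together with $x_i z = 0$ to see that the kernel condition splits. The only noteworthy difference is granularity: the paper works directly with the two-term splitting $R = R_n \oplus \langle x_i \mid i > n\rangle_R$, writes $q^i = s^i + t^i$, and observes in one line that $\sum_i (s^i + t^i)(p_x^{ij} + p_z^{ij}) = (\sum_i s^i p^{ij}) + (\sum_i t^i p_x^{ij})$ with the two summands living in the two direct summands. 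You instead refine $R_n$ further into $k[x_1,\dots,x_n] \oplus (z\cdot k[z])$, which forces you to track a separate $k[x]$-part and $z\cdot k[z]$-part and then recombine them at the end to recover an element of $\kernel_{R_n}$. This extra bookkeeping is what makes your final paragraph feel delicate; the paper's coarser splitting avoids it entirely. Both arguments need the same freeness step at the end (expanding along monomials in $\{x_i \mid i>n\}$ to pass from $\kernel_{k[x]}(p_x^{ij}) \cap \langle x_i \mid i>n\rangle_{k[x]}$ to $\langle x_i \mid i>n\rangle_R \cdot \kernel_{k[x]}(p_x^{ij})$), which you spell out and the paper absorbs into the phrase ``due to our choice of $n$''.
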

\begin{proof}
 Given any row $( q^i )_i \in R^{1 \times a}$, we decompose
 its entries w.r.t.\
 \[
  R = R_n \oplus \langle x_i \mid i > n \rangle_R,
 \]
 i.e., 
 \[q^i = s^i + t^i\]
 for $s^i \in R_n$ and $t^i \in \langle x_i \mid i > n \rangle_R$.
 We compute for each $j = 1, \dots, b$
 \begin{align*}
  \sum_{i = 1}^a (s^i + t^i) \cdot (p_x^{ij} + p_z^{ij})
  = ( \sum_{i = 1}^a s^i \cdot p^{ij} ) + ( \sum_{i = 1}^a t^i \cdot p_x^{ij} )
 \end{align*}
 Since $( \sum_{i = 1}^a s^i \cdot p^{ij} ) \in R_n$ and
 $( \sum_{i = 1}^a t^i \cdot p_x^{ij} ) \in \langle x_i \mid i > n \rangle_R$,
 the sum vanishes if and only if both summands vanish.
 It follows that $( q^i )_i \in \kernel_R\left( p^{ij} \right)_{ij}$ if and only if
 \[
  (s^i)_i \in \kernel_{R_n}\left( p^{ij} \right)_{ij}
 \]
 and 
 \[
  (t^i)_i \in \kernel_{R}\left( p_x^{ij} \right)_{ij} \cap (\langle x_i \mid i > n \rangle_R)^{1 \times a}.
 \]
 Finally,
 \begin{align*}
  \kernel_{R}\left( p_x^{ij} \right)_{ij} \cap (\langle x_i \mid i > n \rangle_R)^{1 \times a}
  &=
  \kernel_{k[x]}\left( p_x^{ij} \right)_{ij} \cap (\langle x_i \mid i > n \rangle_{k[x]})^{1 \times a} \\
  &=
  \langle x_i \mid i > n \rangle_R \cdot \kernel_{k[x]}\left( p_x^{ij} \right)_{ij}
 \end{align*}
 due to our choice of $n$.
\end{proof}
% 
% In particular, we only need finitely many data in order to describe the row kernel of a given matrix over $R$: namely generators of the row kernel over $R_n$,
% and generators of the row kernel of the $x$-part over $k[x]$.

\subsubsection{Solving the syzygy inclusion problem for $R$}\label{subsubsection:proving_syz_incl_for_R}
We solve the simplified syzygy inclusion problem for $R$,
which, by Corollary \ref{corollary:simplified_syzygy_inclusion_problem},
suffices to solve the syzygy inclusion problem in general, which proves Theorem \ref{theorem:rows_R_decidable_syzygy_inclusion}.
Let 
\[
 R^{1 \times a} \stackrel{\gamma}{\longrightarrow} R^{1 \times b} \longleftarrow 0
\]
and 
\[
 R^{1 \times a} \stackrel{\gamma'}{\longrightarrow} R^{1 \times b'} \stackrel{\gamma'}{\longleftarrow} R^{1 \times c'}
\]
be two cospans in $\Rows_R$ for $a,b,b',c' \in \N$.
Our goal is to decide algorithmically whether
\[
 \SyzC\big( R^{1 \times a} \stackrel{\gamma}{\longrightarrow} R^{1 \times b} \longleftarrow 0 \big) \subseteq \SyzC\big( R^{1 \times a} \stackrel{\gamma'}{\longrightarrow} R^{1 \times b'} \stackrel{\rho'}{\longleftarrow} R^{1 \times c'}\big).
\]
Choose $n \in \N$ such that all entries of $\gamma, \gamma', \rho'$ lie in $R_n$.
Next, compute generators of $\kernel_R( \gamma )$ according to the description in
Lemma \ref{lemma:decomposition_of_kernel_for_R}, i.e.,
compute
finitely many generators
\[
 \sigma_1, \dots, \sigma_d
\]
of $\kernel_{R_n}( \gamma )$ for $d \in \N$,
and finitely many generators
\[
 \tau_1, \dots, \tau_e
\]
of $\kernel_{k[x]}( \gamma_x )$ for $e \in \N$. Note that all $\tau_i$ can be chosen s.t.\ their entries lie in $k[x_1, \dots, x_n]$
by Remark \ref{remark:computable_field}.

\begin{lemma}\label{lemma:checking_only_finitely_many_lifts}
 We have
 \[
 \SyzC\big( R^{1 \times a} \stackrel{\gamma}{\longrightarrow} R^{1 \times b} \longleftarrow 0 \big) \subseteq \SyzC\big( R^{1 \times a} \stackrel{\gamma'}{\longrightarrow} R^{1 \times b'} \stackrel{\rho'}{\longleftarrow} R^{1 \times c'}\big)
\]
 if and only if
 \[
   (R \stackrel{\sigma_i}{\longrightarrow} R^{1 \times a}), (R \stackrel{x_{n+1} \cdot \tau_j}{\longrightarrow} R^{1 \times a}) \in \SyzC\big( R^{1 \times a} \stackrel{\gamma'}{\longrightarrow} R^{1 \times b'} \stackrel{\rho'}{\longleftarrow} R^{1 \times c'}\big)
 \]
 for $i = 1, \dots d$ and $j = 1, \dots, e$.
\end{lemma}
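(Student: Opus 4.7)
The plan is to handle both directions by exploiting the $R$-module structure of the target syzygy set and the decomposition of $\kernel_R(\gamma)$ supplied by Lemma~\ref{lemma:decomposition_of_kernel_for_R}. For the forward (``only if'') direction, I would argue that the listed elements are genuine syzygies of the first cospan, so the assumed inclusion immediately forces them into the second. For each $\sigma_i$ this is clear from $\sigma_i \in \kernel_{R_n}(\gamma) \subseteq \kernel_R(\gamma)$. For $x_{n+1} \cdot \tau_j$ I would decompose $\gamma = \gamma_x + \gamma_z$ and note that $\tau_j \cdot \gamma_x = 0$ by construction of $\tau_j$, while the relation $x_{n+1} \cdot z = 0$ annihilates the contribution of $\gamma_z$ after multiplication by $x_{n+1}$, giving $x_{n+1} \tau_j \cdot \gamma = 0$. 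In both cases the syzygy witness required for the first cospan is trivially zero, since that cospan ends in $0$.

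For the reverse (``if'') direction, I would first introduce the set
\[
 T := \{v \in R^{1 \times a} \mid (R \stackrel{v}{\longrightarrow} R^{1 \times a}) \in \SyzC(R^{1 \times a} \stackrel{\gamma'}{\longrightarrow} R^{1 \times b'} \stackrel{\rho'}{\longleftarrow} R^{1 \times c'})\}
\]
and observe that $T$ is an $R$-submodule of $R^{1 \times a}$, because syzygy witnesses can be added and left-multiplied by elements of $R$. Any syzygy of the first cospan with source $R^{1 \times s}$ is a matrix whose rows lie in $\kernel_R(\gamma)$; if each row lies in $T$, stacking the individual row witnesses produces a witness for the whole matrix. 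Hence the desired syzygy inclusion reduces to proving $\kernel_R(\gamma) \subseteq T$.

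Now Lemma~\ref{lemma:decomposition_of_kernel_for_R} expresses every element of $\kernel_R(\gamma)$ as a sum of an element of $\kernel_{R_n}(\gamma)$ (which is an $R$-linear combination of the $\sigma_i$) and an element of $\langle x_i \mid i > n\rangle_R \cdot \kernel_{k[x]}(\gamma_x)$, i.e., a finite sum of the form $\sum_{\ell > n,\,j} q_{\ell,j} \cdot x_\ell \tau_j$ with $q_{\ell,j} \in R$. The first summand lies in $T$ by hypothesis and $R$-linearity, so the remaining task is to place every $x_\ell \tau_j$ with $\ell > n$ into $T$.

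This last step is the main obstacle: the hypothesis only supplies $x_{n+1} \tau_j \in T$, not $x_\ell \tau_j$ for general $\ell > n$. I plan to resolve it by a symmetry argument. Consider the ring automorphism $\varphi_\ell \colon R \to R$ that interchanges $x_{n+1}$ with $x_\ell$ and fixes all other generators; this is well-defined because it permutes the defining relations $x_i z = 0$ among themselves. Since $n+1, \ell > n$, $\varphi_\ell$ fixes $R_n$ pointwise, hence fixes $\gamma'$ and $\rho'$ entry-wise; it also fixes $\tau_j$, whose entries lie in $k[x_1, \ldots, x_n]$. Applying $\varphi_\ell$ entry-wise to the witness equation $x_{n+1} \tau_j \cdot \gamma' = \omega'_j \cdot \rho'$ supplied by the hypothesis yields $x_\ell \tau_j \cdot \gamma' = \varphi_\ell(\omega'_j) \cdot \rho'$, exhibiting $x_\ell \tau_j$ as an element of $T$. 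Assembling the pieces then gives $\kernel_R(\gamma) \subseteq T$ and completes the proof.
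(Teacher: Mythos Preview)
Your proposal is correct and follows essentially the same approach as the paper: both directions rely on the decomposition of $\kernel_R(\gamma)$ from Lemma~\ref{lemma:decomposition_of_kernel_for_R}, and the key step of promoting $x_{n+1}\tau_j$ to $x_\ell\tau_j$ for all $\ell>n$ is handled via the same ring automorphism swapping $x_{n+1}$ and $x_\ell$, which fixes $\gamma'$, $\rho'$, and $\tau_j$ because their entries lie in $R_n$ (respectively $k[x_1,\dots,x_n]$). Your packaging via the $R$-submodule $T$ is a minor presentational difference, not a substantive one.
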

\begin{proof}
 By Lemma \ref{lemma:decomposition_of_kernel_for_R}, the elements $\sigma_i$ and $x_{n+1} \cdot \tau_j$
 lie in $\SyzC\big( R^{1 \times a} \stackrel{\gamma}{\longrightarrow} R^{1 \times b} \longleftarrow 0 \big)$,
 so, we only have to prove the ``$\Longleftarrow$'' direction.
 For this direction, we have to show that an arbitrary syzygy 
 \[
  (R^{1 \times s} {\longrightarrow} R^{1 \times a}) \in
  \SyzC\big( R^{1 \times a} \stackrel{\gamma}{\longrightarrow} R^{1 \times b} \longleftarrow 0 \big)
 \]
 already lies in $\SyzC\big( R^{1 \times a} \stackrel{\gamma'}{\longrightarrow} R^{1 \times b'} \stackrel{\rho'}{\longleftarrow} R^{1 \times c'}\big)$.
 Since such an arbitrary syzygy is nothing but a collection of $s$-many row syzygies,
 we may assume that $s = 1$. So, let
 \[
  (R^{1 \times 1} \stackrel{\sigma}{\longrightarrow} R^{1 \times a}) \in
  \SyzC\big( R^{1 \times a} \stackrel{\gamma}{\longrightarrow} R^{1 \times b} \longleftarrow 0 \big)
 \]
 be a syzygy, which means $\sigma \in \kernel_R( \gamma )$.
 By Lemma \ref{lemma:decomposition_of_kernel_for_R}, we can write $\sigma$ as a sum of the form
 \[
  \sigma = (\sum_{i = 1}^d r_i \cdot \sigma_i) + (\sum_{ \substack{i > n\\ j = 1, \dots, e} } s_{ij} \cdot x_i \cdot \tau_j )
 \]
 for $r_i, s_{ij} \in R$, all but finitely many equal to zero.
 It follows that we only need to prove
 \[
  (R^{1 \times 1} \stackrel{x_i \cdot \tau_j}{\longrightarrow} R^{1 \times a}) \in
  \SyzC\big( R^{1 \times a} \stackrel{\gamma'}{\longrightarrow} R^{1 \times b'} \stackrel{\rho'}{\longleftarrow} R^{1 \times c'}\big)
 \]
 for $i > n + 1 $.
 By assumption, we have 
 \[(R^{1 \times 1} \stackrel{x_{n+1} \cdot \tau_j}{\longrightarrow} R^{1 \times a}) \in \SyzC\big( R^{1 \times a} \stackrel{\gamma'}{\longrightarrow} R^{1 \times b'} \stackrel{\rho'}{\longleftarrow} R^{1 \times c'}\big),\]
 which means that there exists a commutative diagram of the form
 \begin{equation}\label{equation:x_n1_syz}
            \begin{tikzpicture}[label/.style={postaction={
              decorate,
              decoration={markings, mark=at position .5 with \node #1;}}},baseline=(base),
              mylabel/.style={thick, draw=none, align=center, minimum width=0.5cm, minimum height=0.5cm,fill=white}]
                \coordinate (r) at (2.5,0);
                \coordinate (u) at (0,1.2);
                \node (A) {$R^{1 \times 1}$};
                \node (B) at ($(A)+1*(r)$) {};
                \node (C) at ($(B) +(r)$) {};
                \node (A2) at ($(A) - (u)$) {$R^{1 \times a}$};
                \node (B2) at ($(B) - (u)$) {$R^{1 \times b'}$};
                \node (C2) at ($(C) - (u)$) {$R^{1 \times c'}$.};
                \node (base) at ($(A) - 0.5*(u)$) {};
                \draw[->,thick] (A2) -- node[below]{$\gamma'$} (B2);
                \draw[->,thick] (C2) -- node[below]{$\rho'$} (B2);
                \draw[->,thick, dashed] (A) --node[above]{$\omega$}  (C2);
                \draw[->,thick] (A) --node[left]{$x_{n+1} \cdot \tau_j$} (A2);
            \end{tikzpicture}
 \end{equation}
 For any $i > n+1$, we can define a ring automorphism $\phi_i$ of $R$
 by
 \begin{align*}
  \phi_i( z ) &:= z,  \\
  \phi_i( x_{n+1} ) &:= x_i, \\
  \phi_i( x_i ) &:= x_{n+1}, \\
  \phi_i( x_j ) &:= x_{j}, \hspace{1em} j \not\in \{i, n+1\}.\\
 \end{align*}
 Applying $\phi_i$ to the diagram \eqref{equation:x_n1_syz} yields
 \begin{center}
            \begin{tikzpicture}[label/.style={postaction={
              decorate,
              decoration={markings, mark=at position .5 with \node #1;}}},
              mylabel/.style={thick, draw=none, align=center, minimum width=0.5cm, minimum height=0.5cm,fill=white}]
                \coordinate (r) at (2.5,0);
                \coordinate (u) at (0,1.2);
                \node (A) {$R^{1 \times 1}$};
                \node (B) at ($(A)+1*(r)$) {};
                \node (C) at ($(B) +(r)$) {};
                \node (A2) at ($(A) - (u)$) {$R^{1 \times a}$};
                \node (B2) at ($(B) - (u)$) {$R^{1 \times b'}$};
                \node (C2) at ($(C) - (u)$) {$R^{1 \times c'}$};
                \draw[->,thick] (A2) -- node[below]{$\gamma'$} (B2);
                \draw[->,thick] (C2) -- node[below]{$\rho'$} (B2);
                \draw[->,thick, dashed] (A) --node[above]{$\phi_i(\omega)$}  (C2);
                \draw[->,thick] (A) --node[left]{$x_{i} \cdot \tau_j$} (A2);
            \end{tikzpicture}
        \end{center}
  since $\phi_i$ leaves $\gamma', \rho', \tau_j$ invariant due to our choice of $n$.
  This proves that
  $x_i \cdot \tau_j \in \SyzC\big( R^{1 \times a} \stackrel{\gamma'}{\longrightarrow} R^{1 \times b'} \stackrel{\rho'}{\longleftarrow} R^{1 \times c'}\big)$
  for $i > n + 1$ and consequently
  that $\sigma$ is a syzygy in
  $\SyzC\big( R^{1 \times a} \stackrel{\gamma'}{\longrightarrow} R^{1 \times b'} \stackrel{\rho'}{\longleftarrow} R^{1 \times c'}\big)$.
\end{proof}

\begin{proof}[Proof of Theorem \ref{theorem:rows_R_decidable_syzygy_inclusion}]
 Since $k$ is computable, the rings $R_n$ and $k[x]$ are also computable by Remark \ref{remark:computable_field}.
 In particular, we may compute the finitely many elements $\sigma_1, \dots, \sigma_d$
 and $x_{n+1}\cdot\tau_1, \dots, x_{n+1}\cdot\tau_e$
 of Lemma \ref{lemma:checking_only_finitely_many_lifts}.
 By Remark \ref{remark:rows_R_has_decidable_lifts} below,
 $\Rows_R$ has decidable lifts, which means that we can check if these finitely many
 elements are syzygies or not.
\end{proof}

\begin{remark}\label{remark:rows_R_has_decidable_lifts}
 If we start with a diagram
 \begin{center}
            \begin{tikzpicture}[label/.style={postaction={
              decorate,
              decoration={markings, mark=at position .5 with \node #1;}}},
              mylabel/.style={thick, draw=none, align=center, minimum width=0.5cm, minimum height=0.5cm,fill=white}]
                \coordinate (r) at (2.5,0);
                \coordinate (u) at (0,1.2);
                \node (A) {$R^{1 \times a}$};
                \node (B) at ($(A)+1*(r)$) {};
                \node (C) at ($(B) +(r)$) {};
                \node (A2) at ($(A) - (u)$) {$R^{1 \times c}$};
                \node (B2) at ($(B) - (u)$) {$R^{1 \times b}$};
                \draw[->,thick] (B2) -- node[below]{$\beta$} (A2);
                \draw[->,thick] (A) --node[left]{$\alpha$} (A2);
            \end{tikzpicture}
        \end{center}
 in $\Rows_R$, there exists an $n \in \N$ such that all entries of $\alpha$ and $\beta$ lie in
 $R_n$. There exists a lift
 \begin{center}
            \begin{tikzpicture}[label/.style={postaction={
              decorate,
              decoration={markings, mark=at position .5 with \node #1;}}},
              mylabel/.style={thick, draw=none, align=center, minimum width=0.5cm, minimum height=0.5cm,fill=white}]
                \coordinate (r) at (2.5,0);
                \coordinate (u) at (0,1.2);
                \node (A) {$R^{1 \times a}$};
                \node (B) at ($(A)+1*(r)$) {};
                \node (C) at ($(B) +(r)$) {};
                \node (A2) at ($(A) - (u)$) {$R^{1 \times c}$};
                \node (B2) at ($(B) - (u)$) {$R^{1 \times b}$};
                \draw[->,thick] (B2) -- node[below]{$\beta$} (A2);
                \draw[->,thick] (A) --node[left]{$\alpha$} (A2);
                \draw[->,dashed,thick] (A) -- (B2);
            \end{tikzpicture}
        \end{center}
  in $\Rows_R$ if and only if there exists a lift in $\Rows_{R_n}$,
  since we can always apply the natural epimorphism $R \twoheadrightarrow R_n$
  to the entries of a lift in $\Rows_R$ in order to obtain a lift in $\Rows_{R_n}$.
\end{remark}

\subsection{Subcategories of graded modules and functors}

We have seen that the category constructor $\CIC( - )$ applied to $\Rows_R$ for a ring $R$
yields a computational model for a certain subcategory of $R\Modl$.
As a benefit of the abstraction that we made in this paper, 
we give two more examples of additive categories
that yield interesting results when we apply $\CIC(-)$ to them.

\begin{example}[Graded modules]
 Let $G$ be a group and let $S$
 be a $G$-graded ring, i.e., it comes equipped with
 a decomposition into abelian groups $S = \bigoplus_{g \in G}S_g$ such that
 $S_g \cdot S_h \subseteq S_{gh}$ for all $g,h \in G$, and the multiplicative unit of $S$ lies in $S_e$ for $e$ the neutral element of $G$.
 For such a $G$-graded ring, we may define the category $\grRows_S$
 of \textbf{graded left row modules}.
 Its objects are given by direct sums of shifts of $S$ (considered as a graded $S$-module),
 where the shift by $g \in G$ of a graded left $S$-module $M = \bigoplus_{h \in G} M_h$ is defined by
 \[
  M(g) := \bigoplus_{h \in G}M_{h\cdot g}.
 \]
 Morphisms in $\grRows_S$ are given by $G$-graded $S$-module homomorphisms,
 which can be identified with matrices over $S$ having homogeneous entries
 whose degrees are compatible with the shifts occurring in the source and range.
 Concretely, a morphism
 \[S(d_1) \oplus \dots \oplus S(d_r) \rightarrow S(e_1) \oplus \dots \oplus S(e_s)\]
 for $d_1, \dots, d_r, e_1, \dots, e_s \in G$, $r, s \in \Nzero$
 is given by a matrix $(H_{ij})_{ij} \in S^{r \times s}$
 such that $H_{ij} \in S_{d_i^{-1} \cdot e_j}$ for all $i,j$.
 A functor
 \[
  F: \grRows_S^{\op} \longrightarrow \Ab
 \]
 gives rise to a graded left $S$-module $\bigoplus_{g \in G} F(S(g^{-1}))$, and similar to
 the non-graded case described in Example \ref{example:functors_are_modules},
 we have an equivalence between $\Modr \grRows_S$ and the category of graded $S$-modules.
 It follows by Corollary \ref{corollary:characterization_as_cokernel_image_closure}
 that $\CIC( \grRows_S )$ can be seen as a computational model for 
 the smallest full and replete additive subcategory of all graded $S$-modules
 that includes shifts of $S$, cokernels, and images.
\end{example}

\begin{example}[Functors]
 For an additive category $\PC$, the category $\CIC( \PC )$ always has cokernels by Construction \ref{construction:cokernel}.
 Thus, $\CIC( \PC )^{\op}$ has kernels, so in particular weak kernels, which implies
 \[
  \CIC( \CIC( \PC )^{\op} ) \simeq \fp( \CIC( \PC ), \Ab )
 \]
 by Theorem \ref{theorem:abelian_case}.
 Thus, an iterated application of $\CIC( - )$ can yield a computational model for categories of finitely presented functors
 on $\CIC( \PC )$.
\end{example}

\def\cprime{$'$} \def\cprime{$'$} \def\cprime{$'$} \def\cprime{$'$}
  \def\cprime{$'$}


\begin{thebibliography}{18}
\providecommand{\natexlab}[1]{#1}
\providecommand{\url}[1]{\texttt{#1}}
\expandafter\ifx\csname urlstyle\endcsname\relax
  \providecommand{\doi}[1]{doi: #1}\else
  \providecommand{\doi}{doi: \begingroup \urlstyle{rm}\Url}\fi

\bibitem[Barakat(2009)]{barhabil}
Mohamed Barakat.
\newblock
  \emph{\href{https://algebra.mathematik.uni-siegen.de/barakat/habil/habil.pdf}{The
  homomorphism theorem and effective computations}}.
\newblock Habilitation thesis, Department of Mathematics, RWTH-Aachen
  University, April 2009.

\bibitem[Barakat and Lange-Hegermann(2011)]{BL}
Mohamed Barakat and Markus Lange-Hegermann.
\newblock An axiomatic setup for algorithmic homological algebra and an
  alternative approach to localization.
\newblock \emph{Journal of Algebra and its Applications}, 10\penalty0
  (2):\penalty0 269--293, 2011.
\newblock \doi{https://doi.org/10.1142/S0219498811004562}.

\bibitem[Barakat and Lange-Hegermann(2014)]{BL_GabrielMorphisms}
Mohamed Barakat and Markus Lange-Hegermann.
\newblock {G}abriel morphisms and the computability of {S}erre quotients with
  applications to coherent sheaves.
\newblock (\href{http://arxiv.org/abs/1409.2028}{\texttt{arXiv:1409.2028}}),
  Sep. 2014.

\bibitem[Beligiannis(2000)]{BelFredCats}
Apostolos Beligiannis.
\newblock On the {F}reyd categories of an additive category.
\newblock \emph{Homology, Homotopy and Applications}, 2:\penalty0 147--185,
  2000.
\newblock ISSN 1512-0139.
\newblock \doi{https://doi.org/10.4310/HHA.2000.v2.n1.a11}.

\bibitem[Freyd and Kelly(1972)]{FK72}
P.~J. Freyd and G.~M. Kelly.
\newblock Categories of continuous functors. {I}.
\newblock \emph{Journal of Pure and Applied Algebra}, 2:\penalty0 169--191,
  1972.
\newblock ISSN 0022-4049.
\newblock \doi{https://doi.org/10.1016/0022-4049(72)90001-1}.

\bibitem[Freyd(1966)]{FreydRep}
Peter Freyd.
\newblock Representations in abelian categories.
\newblock In \emph{Proc. {C}onf. {C}ategorical {A}lgebra ({L}a {J}olla,
  {C}alif., 1965)}, pages 95--120. Springer, New York, 1966.
\newblock \doi{https://doi.org/10.1007/978-3-642-99902-4_4}.

\bibitem[Grayson and Stillman()]{M2}
Daniel~R. Grayson and Michael~E. Stillman.
\newblock Macaulay2, a software system for research in algebraic geometry.
\newblock \url{http://www.math.uiuc.edu/Macaulay2/}.

\bibitem[Greuel and Pfister(2002)]{GP}
G.~Greuel and G.~Pfister.
\newblock \emph{A {S}ingular {I}ntroduction to {C}ommutative {A}lgebra}.
\newblock Springer-Verlag, 2002.
\newblock \doi{https://doi.org/10.1007/978-3-662-04963-1}.
\newblock With contributions by Olaf Bachmann, Christoph Lossen and Hans
  Sch\"onemann.

\bibitem[Gutsche(2017)]{GutscheDoktor}
Sebastian Gutsche.
\newblock \emph{Constructive category theory and applications to algebraic
  geometry}.
\newblock Dissertation, University of Siegen, 2017.
\newblock
  (\href{https://nbn-resolving.org/urn:nbn:de:hbz:467-12411}{\url{https://nbn-resolving.org/urn:nbn:de:hbz:467-12411}}).

\bibitem[Gutsche et~al.(2013--2018)Gutsche, Skartsæterhagen, and
  Posur]{CAP-project}
Sebastian Gutsche, Øystein Skartsæterhagen, and Sebastian Posur.
\newblock The $\mathtt{CAP}$ project -- {C}ategories, {A}lgorithms,
  {P}rogramming.
\newblock (\url{http://homalg-project.github.io/CAP_project}), 2013--2018.

\bibitem[Mac~Lane(1998)]{MLCWM}
Saunders Mac~Lane.
\newblock \emph{Categories for the working mathematician}, volume~5 of
  \emph{Graduate Texts in Mathematics}.
\newblock Springer-Verlag, New York, second edition, 1998.
\newblock ISBN 0-387-98403-8.
\newblock \doi{https://doi.org/10.1007/978-1-4757-4721-8}.

\bibitem[Mines et~al.(1988)Mines, Richman, and
  Ruitenburg]{MRRConstructiveAlgebra}
Ray Mines, Fred Richman, and Wim Ruitenburg.
\newblock \emph{A course in constructive algebra}.
\newblock Universitext. Springer-Verlag, New York, 1988.
\newblock ISBN 0-387-96640-4.
\newblock \doi{https://doi.org/10.1007/978-1-4419-8640-5}.

\bibitem[Mitchell(1972)]{MitchellRings}
Barry Mitchell.
\newblock Rings with several objects.
\newblock \emph{Advances in Mathematics}, 8:\penalty0 1--161, 1972.
\newblock ISSN 0001-8708.
\newblock \doi{https://doi.org/10.1016/0001-8708(72)90002-3}.

\bibitem[Posur(2017{\natexlab{a}})]{PosFreyd}
Sebastian Posur.
\newblock {A constructive approach to Freyd categories}.
\newblock (\href{https://arxiv.org/abs/1712.03492}{\texttt{arXiv:1712.03492}}),
  Dec. 2017{\natexlab{a}}.

\bibitem[Posur(2017{\natexlab{b}})]{PosurDoktor}
Sebastian Posur.
\newblock \emph{Constructive Category Theory and Applications to Equivariant
  Sheaves}.
\newblock Dissertation, University of Siegen, 2017{\natexlab{b}}.
\newblock
  (\href{https://nbn-resolving.org/urn:nbn:de:hbz:467-11798}{\url{https://nbn-resolving.org/urn:nbn:de:hbz:467-11798}}).

\bibitem[Posur(2018)]{PosLinSys}
Sebastian Posur.
\newblock Linear systems over localizations of rings.
\newblock \emph{Archiv der Mathematik}, 111\penalty0 (1):\penalty0 23--32,
  2018.
\newblock ISSN 0003-889X.
\newblock \doi{https://doi.org/10.1007/s00013-018-1183-z}.

\bibitem[{Posur}(2019)]{PosCCT}
Sebastian {Posur}.
\newblock {Methods of constructive category theory}.
\newblock (\href{https://arxiv.org/abs/1908.04132}{\texttt{arXiv:1908.04132}}),
  Aug. 2019.

\bibitem[Quadrat(2013)]{QGrade}
Alban Quadrat.
\newblock Grade {F}iltration of {L}inear {F}unctional {S}ystems.
\newblock \emph{Acta Applicandae Mathematicae. An International Survey Journal
  on Applying Mathematics and Mathematical Applications}, 127:\penalty0 27--86,
  2013.
\newblock \doi{https://doi.org/10.1007/s10440-012-9791-2}.

\end{thebibliography}
\end{document}